\newtheorem{Theorem}{Theorem}[section]
\newtheorem{Proposition}[Theorem]{Proposition}
 \newtheorem{Corollary}[Theorem]{Corollary}
\newtheorem{Lemma}[Theorem]{Lemma}
\newtheorem{Observation}[Theorem]{Observation}
\newtheorem{Question}{Question}
\definecolor{VeryLightBlue}{rgb}{0.9,0.9,1}
\definecolor{LightBlue}{rgb}{0.8,0.8,1}
\definecolor{MidBlue}{rgb}{0.5,0.5,1}
\definecolor{DarkBlue}{rgb}{0,0,0.6}
\definecolor{Blue}{rgb}{0,0,1}
\definecolor{Gold}{rgb}{1,0.843,0}
\definecolor{LightGreen}{rgb}{0.88,1,0.88}
\definecolor{MidGreen}{rgb}{0.6,1,0.6}
\definecolor{DarkGreen}{rgb}{0,0.6,0}
\definecolor{VeryLightYellow}{rgb}{1,1,0.9}
\definecolor{LightYellow}{rgb}{1,1,0.6}
\definecolor{MidYellow}{rgb}{1,1,0.5}
\definecolor{DarkYellow}{rgb}{1,1,0.2}
\definecolor{DarkPurple}{rgb}{.6,0,1}
\definecolor{Red}{rgb}{1,0,0}
\definecolor{VeryLightRed}{rgb}{1,0.9,0.9}
\definecolor{LightRed}{rgb}{1,0.8,0.8}
\definecolor{MidRed}{rgb}{1,0.55,0.55}
\def\leq{\leqslant}
\def\geq{\geqslant}
\long\def\delete#1{}
\newcommand{\be}{\begin{equation}}
\newcommand{\ee}{\end{equation}}
\newcommand{\bea}{\begin{eqnarray}}
\newcommand{\eea}{\end{eqnarray}}
\newcommand{\bean}{\begin{eqnarray*}}
\newcommand{\eean}{\end{eqnarray*}}
\def\diam{d}
\def\ve{\varepsilon}
\def\mt{\mathcal}
\def\L{\mathcal{L}}
\def\({\left(}
\def\){\right)}
\def\[{\left[}
\def\]{\right]}
\begin{document}

\title{Optimal Radio Labellings of Block Graphs and Line Graphs of Trees}

\author{\textbf{Devsi Bantva} \\ Department of Mathematics \\ Lukhdhirji Engineering College, Morvi - 363 642 \\ Gujarat, India \\ E-mail : \textit{devsi.bantva@gmail.com} \\ \\
\textbf{Daphne Der-Fen Liu} 
\thanks{Research is partially supported by the National Science Foundation grant DMS 1600778 and the NASA MIRO NX15AQ06A grant.} 
\\
California State University Los Angeles, USA \\ E-mail : \textit{dliu@calstatela.edu}
}

\pagestyle{myheadings}
\markboth{\centerline{Devsi Bantva and Daphne Der-Fen Liu}}{\centerline{Optimal radio labellings of block graphs and line graphs of trees}}
\date{}
\openup 0.8\jot
\maketitle

\begin{abstract}
A radio labeling of a graph $G$ is a mapping $f$ : $V(G)$ $\rightarrow$ \{0, 1, 2,...\} such that $|f(u)-f(v)| \geq \diam(G) + 1 - d(u,v)$ holds for every pair of vertices $u$ and $v$, where $\diam(G)$ is the diameter of $G$ and $d(u,v)$ is the distance between $u$ and $v$ in $G$. The radio number of $G$, denoted by $rn(G)$, is the smallest $t$ such that $G$ admits a radio labeling with  $t=\max\{|f(v)-f(u)|: v, u \in V(G)\}$. A block graph is a graph such that each block (induced maximal 2-connected subgraph) is a complete graph.  In this paper, a lower bound for the radio number of block graphs is established. The block graph which achieves   this bound is called a {\it lower bound block graph}. We prove  three necessary and sufficient conditions for lower bound block graphs.  Moreover, we give three sufficient conditions for a graph to be a lower bound block graph. Applying the established bound and conditions, we show that several families of block graphs are lower bound block graphs, including the level-wise regular block graphs and the extended star of blocks. The line graph of a graph $G(V,E)$ has $E(G)$ as the vertex set, where two vertices are adjacent if they are incident edges in $G$. 
We extend our results to trees as trees and its line graphs are block graphs. We prove that if a tree is a lower bound block graph then, under certain conditions, its line graph is also a lower bound block graph, and vice versa. Consequently, we show that the line graphs of many known lower bound trees, excluding paths, are lower bound block graphs.  

\medskip
\noindent
\emph{Keywords}: Radio labeling, radio number, tree, block graph, level-wise regular  block graph, extended star of blocks, line graph. \\

\noindent
\emph{AMS Subject Classification (2010)}: 05C78, 05C15, 05C12.
\end{abstract}

\section{Introduction} 

The radio labeling is motivated by the channel assignment introduced by Hale \cite{Hale}. The task is to assign channels to some given transmitters or stations such that the interference is avoided and the spectrum of the channels used is minimized. The interference between two transmitters is closely related to the proximity of the locations, the closer the location the stronger the interference might occur. In order to avoid stronger interference, the larger their separation in assigned frequencies must be.

In a graph model of the channel assignment, the transmitters are represented by  vertices, and two transmitters are connected by an edge if they are close to each other.  The distance between two vertices $u, v$ in $G$,  denoted by $d(u,v)$, is the length of a shortest $(u,v)$-path. The diameter of $G$, denoted by $d(G)$, is the maximum distance between two vertices in $G$.  A {\it radio $k$-labeling} is a function $f: V(G) \to \{0,1,2,\ldots\}$ such that the following holds for all $u, v \in V(G)$: 
\begin{equation}\label{def:rn}
|f(u) - f(v)| \geq k+1 - d(u,v). 
\end{equation}
The {\it span} of a radio $k$-labeling $f$, denoted by $span(f)$, is the $\max\{ | f(u)-f(v)|: u, v \in V(G)\}$. The {\it radio-$k$-number}, denoted by $rn_k(G)$, is the minimum span of a radio $k$-labeling admitted by $G$.  

When $k=2$, a radio $2$-labeling is known as {\it distance two labeling} or {\it $L(2,1)$-labeling}. In this special case, $rn_2(G)$ is termed as the $\lambda$-number of graph $G$ denoted by $\lambda(G)$. The $L(2,1)$-labeling has been  studied extensively over the past years (cf. \cite{Griggs, Kral}). For more details and bibliographic references, the readers are referred to surveys by Calamoneri \cite{Calamoneri} and by Yeh \cite{Yeh1}.

In this paper, we focus on another special case when  $k=d(G)$. The radio $d(G)$-labeling is known as the {\it radio labeling} of $G$. In this case, $rn_{d(G)} (G)$ is denoted by $rn(G)$. Introduced by Chartrand et al. \cite{Chartrand1, Chartrand2},   radio labeling has been studied extensively in the past two decades (cf. \cite{Bantva1,Bantva2,Chartrand1,Chartrand2,Daphne1,Liu2, MCC, graceful,JP, LP1,  Vaidya1,Vaidya2,Vaidya3,Zhou1, Zhou3, Das}). The radio number of cycles and paths were determined by Liu and Zhu \cite{Liu2}. Khennoufa and Togni \cite{Khennoufa} studied the radio number for hypercubes by using generalized binary Gray codes.  
Martinez et al. \cite{JP} investigated the radio number of generalized prism graphs. Niedzialomski \cite{graceful} studied radio graceful graphs (where $G$ admits a surjective radio labeling) and showed that the Cartesian  product of $t$ copies of a complete graph is radio graceful for certain $t$, providing infinitely many examples of radio graceful graphs of arbitrary diameter.  For two positive integers $m, n  \geqslant 3$, the {\it toroidal grid} $G_{m,n}$ is the Cartesian product of cycles $C_m$ and $C_n$, $C_{m}\Box C_{n}$. Morris et al. \cite{MCC} determined $rn(G_{n,n})$, and Saha and Panigrahi \cite{LP2} determined $rn(G_{m,n})$ when $mn\equiv0\pmod2$. 

The radio number for trees has been investigated by many authors. Liu \cite{Daphne1} proved a general lower bound of the radio number for trees and characterized the spiders (trees with at most one vertex of degree greater than 2) whose  radio numbers achieve this bound. We call trees whose  radio numbers achieve this bound as {\it lower bound trees}.  Many  families of trees have been shown to be lower bound  trees, including  complete level-wise regular trees without degree 2 vertices (Hal\'asz and Tuza \cite{Tuza}), complete $m$-ary trees with $m \geq 3$ (Li et al. \cite{Li}), and banana trees and firecrackers trees (Bantva et al. \cite{Bantva1, Bantva2}). Most recently, this lower bound has been improved by Liu, Saha, and Das  \cite{ImprovedBound} for non-lower-bound trees. 

The aim of this article is to extend the study on trees to block graphs. For a graph $G$, a {\it 2-connected induced subgraph} of $G$, denoted by $G[V']$, is a subgraph induced by the vertex set $V' \subseteq V(G)$, where $G[V']$ does not contain any cut-vertex. We call $G[V']$  {\it maximal} if  $G[W]$ is not a 2-connected induced subgraph of $G$ for any superset $W \supset V'$. A {\it block graph} is a graph whose maximal 2-connected induced subgraphs are complete graphs (cliques). Obviously, a tree is a block graph.  Generalizing the approach used in \cite{Daphne1} we prove a general lower bound for the radio number of block graphs (\cref{thm:lb}). A block graph  whose radio number is equal to this bound is called a {\it lower bound block graph}.  We prove three necessary and sufficient conditions for lower bound block graphs  (Theorems \ref{thm:lb}, \ref{thm:main}, \ref{thm:main2}).  Moreover, 
three sufficient conditions have been derived for a block graph to be a lower bound block graph (\cref{thm:suf}). 

Applying these results, in Section 4, we present two  families of lower bound block graphs, namely, the level-wise regular block graphs and the extended star of blocks. In Section 5, we discuss relations between the radio numbers of a tree and its line graph (both are block graphs). In particular, we prove sufficient conditions for lower bound trees so that their line graphs are lower bound block graphs, and vice versa. 
These results imply that the line graphs of the above mentioned known lower bound trees (complete level-wise regular trees without degree 2 vertices, complete $m$-ary trees, $m \geq 3$, banana trees, and firecrackers trees,  etc.) are all lower bound block graphs.


 \section{Preliminaries}


This section is aimed to provide preliminaries needed for the advancement of discussion. The distance between two vertices $u$ and $v$ in a graph $G$, written as $d_{G}(u,v)$, is the length of a shortest path between $u$ and $v$. We denote $d_{G}(u,v)$ by $d(u,v)$ when $G$ is clear in the context.  A $(u,v)$-path with length $d(u,v)$ is called a {\it $(u,v)$-geodesic}. The diameter of a graph $G$, denoted by $\diam(G)$, is the maximum $d(u,v)$ among all pairs of vertices.   

Let $G$ be a simple graph. A vertex $v \in V(G)$ is called a {\it cut-vertex} if its deletion increases the number of components of $G$. A graph is {\it 2-connected} if it does not have any cut-vertices. A {\it block} of $G$ is a maximal 2-connected induced subgraph of $G$. A {\it block graph} is a connected graph so that all its blocks are complete graphs (cliques). Note that a tree is a block graph where each block is a single edge. For a block graph $G$, there exists a unique $(u, v)$-geodesic between any two  vertices $u$ and $v$.  Throughout the article, all graphs considered are block graphs. 

Let $G$ be a block graph. For a vertex $v \in V(G)$, the {\it weight of $G$ at $v$} is defined by $wt(v)$ = $\sum_{u \in V(G)}d(u,v)$. The {\it weight of $G$} is 
$$
wt(G) = \min\{wt(v) : v \in V(G)\}.
$$
A vertex $v \in V(G)$ is called a {\it weight center} of $G$ if $wt(v)$ = $wt(G)$. The set of weight centers of $G$ is denoted by $W(G)$. 

For two vertices $u$ and $v$ in a graph $G$,  denote $A(v,u)$ the set of vertices  that are closer to $v$ than to $u$.  That is,
$$
A(v,u) = \{z \in V(G): d(v, z) < d(u, z)\}. 
$$
Denote $n(v,u) = |A(v, u)|$.  

\begin{Proposition}
\label{temp}
Let $G$ be a block graph. The following hold:  
\begin{enumerate}[\rm (1)]
    \item If $a, w, u \in V(G)$ where $aw, wu \in E(G)$ but $au \not\in E(G)$, then $n(a,w) < n(w,u)$.
    \item For any two adjacent vertices $a$ and $w$, we have $wt(a)$ = $wt(w) + n(w, a)- n(a,w)$.
\end{enumerate}
\end{Proposition}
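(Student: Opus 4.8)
The plan is to exploit the two structural features of a block graph that the excerpt emphasizes: every block is a clique, and between any two vertices there is a \emph{unique} geodesic. The common engine for both parts is a trichotomy attached to a single edge. First I would fix an edge $xy$ contained in a block $B$ and argue that, because $B$ is a clique and geodesics are unique, every vertex $z$ satisfies $|d(x,z)-d(y,z)|\le 1$ and falls into exactly one of three cases: $z\in A(x,y)$ (equivalently, $x$ lies on the unique $z$-$y$ geodesic, so $d(y,z)=d(x,z)+1$), the symmetric case $z\in A(y,x)$, or $d(x,z)=d(y,z)$. The cleanest justification routes $z$ to $B$ through the unique vertex $c\in B$ at which the $z$-$B$ geodesics enter; then $d(z,v)=d(z,c)+d(c,v)$ for all $v\in B$, and since $d(c,v)\in\{0,1\}$ the comparison of $d(z,x)$ with $d(z,y)$ is decided solely by whether $c$ equals $x$, equals $y$, or neither.

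For statement (2) I would then write $wt(a)-wt(w)=\sum_{z\in V(G)}\bigl(d(a,z)-d(w,z)\bigr)$ and evaluate the summand via the trichotomy applied to the edge $aw$: it equals $-1$ on $A(a,w)$, $+1$ on $A(w,a)$, and $0$ on the equidistant set. Summing yields $wt(a)-wt(w)=n(w,a)-n(a,w)$, which is exactly the claimed identity. This half is a direct computation once the trichotomy is available.

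For statement (1) the key point is the geodesic description $A(a,w)=\{z:\ a \text{ lies on the } z\text{-}w \text{ geodesic}\}$ and $A(w,u)=\{z:\ w \text{ lies on the } z\text{-}u \text{ geodesic}\}$, so I would first prove the set inclusion $A(a,w)\subseteq A(w,u)$ and then upgrade it to a strict one. Since $au\notin E(G)$ while $aw,wu\in E(G)$, the vertices $a,w$ lie in one block and $w,u$ in a distinct block meeting it only at $w$; hence $w$ is a cut-vertex and, in $G-w$, the vertices $a$ and $u$ lie in different components $C_a$ and $C_u$. If $z\in A(a,w)$, uniqueness of geodesics forces the $z$-$w$ geodesic through $a$, so $z$ reaches $a$ without using $w$ and thus $z\in C_a$; as $u\in C_u$, every $z$-$u$ path must cross $w$, giving $d(z,u)=d(z,w)+1$ and hence $z\in A(w,u)$. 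Finally $w$ itself lies in $A(w,u)$ but not in $A(a,w)$, making the inclusion strict, so $n(a,w)=|A(a,w)|<|A(w,u)|=n(w,u)$.

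I expect the main obstacle to be phrasing the cut-vertex and component argument for statement (1) rigorously, and in particular justifying that membership in $A(a,w)$ really does confine $z$ to the $a$-side component of $G-w$; this is precisely where uniqueness of geodesics in a block graph is indispensable. The underlying trichotomy lemma also merits a careful proof, since it is the single fact that converts the clique-and-cut-vertex structure into the $\pm 1$ distance bookkeeping on which both identities depend.
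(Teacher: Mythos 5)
Your proposal is correct and follows essentially the same route as the paper: for (1) the paper likewise establishes the inclusion $A(a,w) \subseteq A(w,u)$ and obtains strictness from $w \in A(w,u)\setminus A(a,w)$, and for (2) it uses the same $\pm 1$/$0$ bookkeeping of $d(a,z)-d(w,z)$ summed over $V(G)$. The only difference is one of detail: the paper asserts these steps with a pointer to Figure 1, whereas you supply the supporting arguments (the gate-vertex trichotomy for an edge in a clique block, and the cut-vertex/component argument showing membership in $A(a,w)$ confines $z$ to the $a$-side of $G-w$) that the paper leaves implicit.
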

\begin{proof} (1) By the assumption that $aw, wu \in E(G)$ but $au \not\in E(G)$, we obtain, if $b \in A(a,w)$ then $b  \in A(w,u)$. So $A(a,w) \subseteq A(w,u)$. Moreover, as $w \in A(w, u)  \setminus A(a,w)$, $|A(a,w)| < |A(w,u)|$.  See $a, w, u$ in  Figure 1 as an example. (2) is true  since $wt(a)-wt(w)=n(w,a) - n(a,w)$. See $a, w$ in Figure 1 as an example. 
\end{proof}
    
\begin{Lemma}
\label{lem:wg} 
Let $G$ be a block graph. Then $W(G)$ is contained in a block of $G$. 
\end{Lemma}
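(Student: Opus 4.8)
The plan is to reduce the statement to showing that any two weight centers are adjacent, and then to invoke the fact that a set of pairwise-adjacent vertices in a block graph lies in a single block. First I would take two distinct weight centers $v, v' \in W(G)$ and consider the $(v,v')$-geodesic $v = x_0, x_1, \ldots, x_k = v'$, which is unique by the remark in the preliminaries. The heart of the argument is to prove that the function $f(i) := wt(x_i)$ is strictly convex in $i$, i.e. that $f(i+1) - f(i) > f(i) - f(i-1)$ for every interior index $i$.

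To establish this I would first rewrite the consecutive differences using Proposition~\ref{temp}(2) applied to the adjacent pairs $(x_i, x_{i+1})$ and $(x_{i-1}, x_i)$, obtaining $f(i+1) - f(i) = n(x_i, x_{i+1}) - n(x_{i+1}, x_i)$ and $f(i) - f(i-1) = n(x_{i-1}, x_i) - n(x_i, x_{i-1})$. Since consecutive geodesic vertices are adjacent while $x_{i-1}x_{i+1} \not\in E(G)$ (they are at distance $2$), Proposition~\ref{temp}(1) applies in both directions: taking $(a,w,u) = (x_{i-1}, x_i, x_{i+1})$ gives $n(x_{i-1}, x_i) < n(x_i, x_{i+1})$, and taking $(a,w,u) = (x_{i+1}, x_i, x_{i-1})$ gives $n(x_{i+1}, x_i) < n(x_i, x_{i-1})$. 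Adding these two inequalities yields exactly $f(i+1) - f(i) > f(i) - f(i-1)$, the desired strict convexity.

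Next I would exploit that $v$ and $v'$ are weight centers, so $f(0) = f(k) = wt(G)$ is the global minimum of $wt$. A strictly convex function on $\{0, 1, \ldots, k\}$ lies strictly below the chord joining its endpoints, so if $k \geq 2$ then $f(i) < wt(G)$ for each interior $i$, contradicting the minimality of $wt(G)$. Hence $k \leq 1$; that is, any two distinct weight centers are adjacent. Consequently $W(G)$ induces a complete subgraph, and since a clique (being $2$-connected when it has at least three vertices, and an edge otherwise) is contained in a maximal $2$-connected induced subgraph, $W(G)$ lies in a single block of $G$.

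I expect the only real obstacle to be the verification of the strict convexity inequality, and in particular the bookkeeping that the hypotheses of Proposition~\ref{temp}(1) hold at every interior vertex of the geodesic, namely $x_{i-1}x_i, x_i x_{i+1} \in E(G)$ but $x_{i-1}x_{i+1} \not\in E(G)$, which is exactly what it means for the $x_j$ to lie on a shortest path. The remaining steps (rewriting differences via Proposition~\ref{temp}(2), the convexity-below-the-chord argument, and the clique-in-a-block conclusion) are routine.
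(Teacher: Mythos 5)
Your proof is correct, and it reaches the contradiction by a genuinely different organization than the paper's, though both arguments rest on the same two ingredients: Proposition~\ref{temp} applied along the unique geodesic joining two hypothetically non-adjacent weight centers, followed by the reduction that a clique in a block graph lies in a single block. The paper chains Proposition~\ref{temp}(1) along the \emph{entire} path in both directions, obtaining $n(x,v_1) < n(v_{t-1},y)$ and $n(y,v_{t-1}) < n(v_1,x)$, and invokes the weight-center hypothesis (via Proposition~\ref{temp}(2)) only at the two endpoints, which yields the cyclic chain $n(v_1,x) \leq n(x,v_1) < n(v_{t-1},y) \leq n(y,v_{t-1}) < n(v_1,x)$. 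You instead use Proposition~\ref{temp}(1) only \emph{locally}, twice at each interior vertex, and convert everything into weight differences via Proposition~\ref{temp}(2), establishing that $wt$ is strictly convex along the geodesic; equal endpoint values then force every interior value strictly below $wt(G)$, contradicting minimality. Your verification of the hypotheses (consecutive geodesic vertices adjacent, $x_{i-1}x_{i+1}\notin E(G)$ since a chord would shortcut a shortest path) is exactly the needed bookkeeping, and the discrete convexity step is sound --- indeed it can be shortened: the increments $d_i = wt(x_i)-wt(x_{i-1})$ are strictly increasing and sum to $0$, so $d_1 < 0$ and already $wt(x_1) < wt(G)$. What your route buys is a stronger, reusable intermediate fact (the weight function of a block graph is strictly convex along every geodesic, which immediately gives that the minimizing set is a clique); what the paper's route buys is brevity, dispensing with any convexity language by comparing only the four end quantities.
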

\begin{proof} It is enough to consider that $|W(G)| \geq 2$. Since $G$ is a block graph, to prove the result, it is enough to show  that $W(G)$ is a clique. Assuming to the contrary, there exist $x, y \in W(G)$ such that $x$ and $y$ are not adjacent.  Then $x$ and $y$ are in different blocks. Let $P_{xy}$ be the unique $(x,y)$-geodesic in $G$, $P_{xy}:  x = v_0, v_1, \ldots, v_t = y$, for some $t \geq 2$. Note that  $v_1, v_2, \ldots, v_{t-1}$ are all cut vertices, and $v_{i} v_{j} \not \in E(G)$ for all $|i-j| \geq 2$.   

By \cref{temp} (1), $n(x,v_1) < n(v_1, v_2) < \ldots < n(v_{t-1},v_{t})$, so $n(x,v_1) < n(v_{t-1}, y)$. Similarly, we have $n(y,v_{t-1}) < n(v_1,x)$. 
As $x, y \in W(G)$, by \cref{temp} (2),   
$wt(v_1)-wt(x) = n(x, v_1) - n(v_1, x) \geq 0$ and 
$wt(v_{t-1})-wt(y) = n(y, v_{t-1}) - n(v_{t-1}, y) \geq 0$, implying that $n(x, v_1) \geq n(v_1,x)$ and $n(y, v_{t-1}) \geq n(v_{t-1}, y)$. Combining the above results, we arrive at the following contradiction $$
n(v_1,x) \leq n(x,v_1) < n(v_{t-1},y) \leq n(y,v_{t-1}) < n(v_1,x). 
$$
Hence, the result follows.
\end{proof}

Let $G$ be a block graph. If $|W(G)| \geq 2$, by \cref{lem:wg}, $W(G) \subseteq B$ for some block $B$ of $G$. We call $B$ the {\it central block} of $G$. A vertex $w \in V(G)$ is a {\it central vertex} if $W(G)=\{w\}$ or $w \in V(B)$. Let $w$ be a central vertex of $G$. If a vertex $u$ is on the $(w,v)$-geodesic, then $u$ is an \emph{ancestor} of $v$, and $v$ is a \emph{descendent} of $u$. If $v$ is a descendent of $u$ and is adjacent to $u$, then $v$ is a \emph{child} of $u$, and $u$ is a {\it parent} of $v$.

Let $D$ be a non-central block that contains a central vertex $w$, then we say $D$ is {\it adjacent} to $w$. The subgraph induced by the vertices in $V(D) \setminus \{w\}$ along with their descendants is called a \emph{branch} of $G$. Two branches are  \emph{different} if they are induced by vertices of two different blocks adjacent to the same central vertex, and are called \emph{opposite} if they are induced by vertices of two different blocks adjacent to different central vertices. Accordingly, two vertices $u$ and $v$ are said to be in  \emph{different branches} (or \emph{opposite branches}, respectively) if they belong to different branches (or \emph{opposite branches} if they belong to opposite branches). Note that opposite branches only occur when $G$ contains more than one central vertex (e.g. Figure \ref{Fig:Block}). 

\begin{figure}[ht]
\begin{center}
  \includegraphics[width=2.5in]{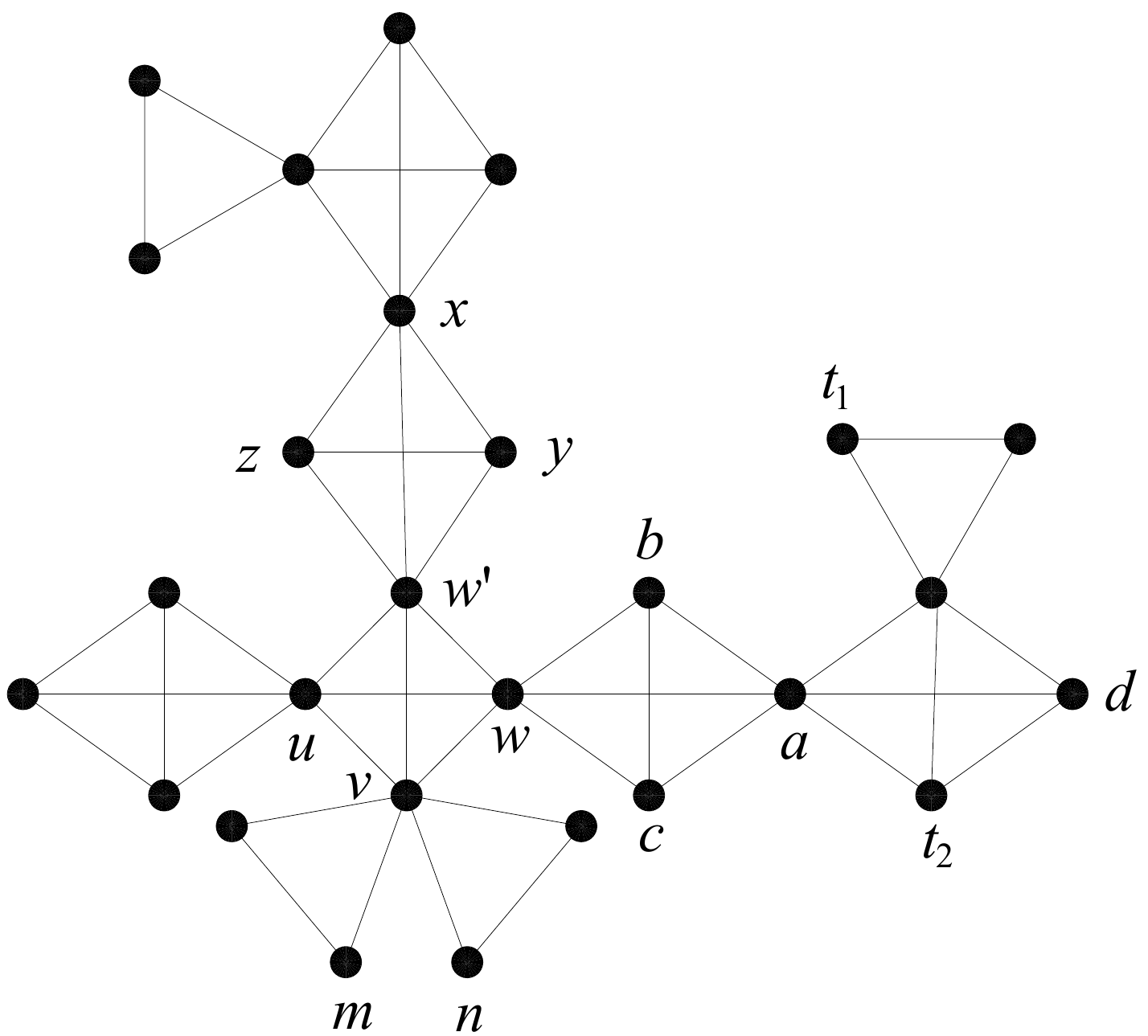}\\
  \caption{A block graph with two weight centers $W(G)=\{w, w'\}$, four central vertices $\{u, v, w, w'\}$ (which form  the central block) and  five branches (one incident to each of $w$, $w'$, and $u$, and two incident to $v$). Vertices $b$ and $y$ belong to opposite branches, while $m$ and $n$ belong to  different branches.} 
  \label{Fig:Block}
\end{center}
\end{figure}

We view a block graph ``rooted" at its central vertices.  Define the {\it level function} on $V(G)$ by $$
L_{G}(v) := \min\{d(v,w) : \mbox{$w$ is a central vertex of $G$}\}. 
$$
We simply denote $L_{G}(v)$ by $L(v)$ when $G$ is clear in the context.  The \emph{total level} of $G$ is
$$
L(G) := \displaystyle\sum_{v \in V(G)} L(v).
$$
Note that between any two vertices $u$ and $v$ in a block graph $G$, there is a unique $(u,v)$-geodesic, denoted by  $P_{uv}$. For a block graph $G$, define the following: 
\begin{center}
$\phi_{G}(u,v)$ := max\{$L_{G}(x)$ : $x$ is a common ancestor of $u$ and $v$\}.
\end{center}
\begin{equation*}
\delta_{G}(u,v) := \left\{
\begin{array}{ll}
1, & \mbox{$P_{uv}$ contains two central vertices}\\ [0.3cm]
0, & \mbox{otherwise}.
\end{array}
\right.
\end{equation*}

\begin{equation*}
\rho_{G}(u,v) := \left\{
\begin{array}{ll}
0, & \mbox{$P_{uv}$ contains a central vertex or a common ancestor of $u$ and $v$} \\ [0.3cm]
1, & \mbox{otherwise}.
\end{array}
\right.
\end{equation*}

\noindent
We denote $\phi_G(u,v)$, $\delta_G(u,v)$ and $\rho_G(u,v)$, by $\phi(u,v)$, $\delta(u,v)$ and $\rho(u,v)$, respectively, when $G$ is clear in the context. The following observations follow directly from the above definitions:

\begin{Observation}
\label{l:phi} 
Let $G$ be a block graph with diameter $d(G) \geq 2$. The  following hold for all $u, v \in V(G)$: 
\begin{enumerate}[\rm (1)] 
\item $0 \leq \phi(u,v) \leq \min\{L(u), L(v)\}$ and $\phi(u,v) \leq \max\{L(z)-1 : z \in V(G)\}$;  
\item $\phi(u,v) = \rho(u,v)= 0$ if and only if $u$ and $v$ are in different or opposite branches, or either of them is a central vertex; 
\item $\delta(u,v)= 0$ if $|W(G)|= 1$ or $u$ and $v$ are in the same branch.
\end{enumerate}
\end{Observation}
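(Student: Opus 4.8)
The plan is to exploit the rooted-tree-like structure of a block graph relative to its set of central vertices, together with the following monotonicity of the level function, which I would isolate first as a claim: \emph{if $x$ is an ancestor of $u$, then $L(x) \leq L(u)$}. Indeed, the central vertices lie in a single clique (the central block, or a single vertex), so for a non-central $u$ the nearest central vertex $w^{\ast}$ is unique and $L(u) = d(u, w^{\ast})$; any ancestor $x$ of $u$ is either a central vertex (with $L(x) = 0$) or lies on the branch-geodesic from $w^{\ast}$ to $u$, in which case $L(x) = d(x, w^{\ast}) \leq d(u, w^{\ast}) = L(u)$. Throughout I take $u \neq v$, since these functions are only applied to distinct vertices.

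For part (1), every central vertex is a common ancestor of both $u$ and $v$, so the set over which $\phi(u,v)$ is maximized is nonempty and, as $L \geq 0$, we get $\phi(u,v) \geq 0$. Applying the monotonicity claim to a maximizing common ancestor $x$ gives $\phi(u,v) = L(x) \leq \min\{L(u), L(v)\}$. For the second bound, fix $x$ realizing $\phi(u,v)$; since $u \neq v$, at least one of $u, v$ is a proper descendant of $x$, so $x$ has a child $x'$ on the geodesic toward that vertex with $L(x') = L(x) + 1$, whence $\phi(u,v) = L(x) = L(x') - 1 \leq \max\{L(z) - 1 : z \in V(G)\}$.

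For part (2) I would argue both implications by a direct case analysis on the rooted structure. For the backward direction, each of the three listed configurations forces $P_{uv}$ through the central block: if one of $u, v$ is central it is an endpoint of $P_{uv}$; if they lie in different branches at a common central vertex $w$ then $w \in P_{uv}$; and if they lie in opposite branches then $P_{uv}$ meets two central vertices. In all three cases $\rho(u,v) = 0$, and the only common ancestors are central vertices, so $\phi(u,v) = 0$. The forward direction I would prove by contraposition: assume neither vertex is central and both lie in the \emph{same} branch, attached at $w$ through a top block $D$. If $u$ and $v$ descend from the same vertex of $D$, that vertex is a common ancestor of level $\geq 1$, so $\phi(u,v) \geq 1$; otherwise their geodesics from $w$ split at distinct clique vertices $a_i, a_j$ of $D$, so $P_{uv}$ uses the edge $a_i a_j$ and avoids $w$ entirely, whence $P_{uv}$ contains neither a central vertex nor a common ancestor and $\rho(u,v) = 1$. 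Either way $\phi(u,v) = \rho(u,v) = 0$ fails. I expect this last ``same top block'' sub-case, where $\phi = 0$ yet it is precisely $\rho$ that detects the configuration, to be the main obstacle, as it is the reason both $\phi$ and $\rho$ must appear in the statement.

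For part (3), if $|W(G)| = 1$ then $G$ has a unique central vertex, so $P_{uv}$ contains at most one central vertex and $\delta(u,v) = 0$ trivially. If instead $u$ and $v$ lie in the same branch attached at some $w$, then because the top block $D$ is a clique the unique $(u,v)$-geodesic stays inside the branch and never returns to $w$; hence it contains no central vertex at all, and in particular not two, so again $\delta(u,v) = 0$. This reuses the same structural fact --- that lateral moves within a clique keep a geodesic strictly below the attachment vertex --- that drove the subtle sub-case of part (2).
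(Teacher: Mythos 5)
Your overall structure is sound, and it is worth noting that the paper offers no argument at all for this statement---it is asserted to ``follow directly from the above definitions''---so your write-up supplies details the paper leaves implicit. For parts (2) and (3) and the first inequality of (1), your ingredients are exactly the right ones: monotonicity of $L$ along ancestors (which is valid because the nearest central vertex of a non-central vertex is unique, the central vertices forming a clique), vertex-disjointness of distinct branches (forcing all common ancestors of vertices in different/opposite branches to be central), and the lateral clique-edge analysis in which $P_{uv}$ uses the edge $a_ia_j$ of the top block and bypasses $w$. That last observation is indeed the subtle point---it is also what the paper implicitly relies on in the $\rho(u,v)=1$ case of the proof of \cref{d:uv}---and your treatment of it, including the case split inside a single branch, is correct.

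However, one step in part (1) fails as written. For the bound $\phi(u,v)\leq\max\{L(z)-1 : z\in V(G)\}$ you fix an \emph{arbitrary} maximizer $x$ and claim that the child $x'$ of $x$ toward a proper descendant among $\{u,v\}$ satisfies $L(x')=L(x)+1$. This is false when that child is itself a central vertex, which can happen precisely in the degenerate situations where $\phi(u,v)=0$: for instance, take $u,v$ to be two vertices of the central block; then every maximizer $x$ is central and the child of $x$ on the geodesic toward $v$ is $v$ itself, with $L(v)=0\neq L(x)+1$. The same failure occurs when $v$ is non-central but the chosen maximizer $x$ is a central vertex other than the attachment vertex of the branch containing $v$ (the child is then that attachment vertex, again of level $0$). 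The patch is one line: if $\phi(u,v)\geq 1$, every maximizer is non-central and your child argument is valid, since a child of a non-central vertex always has level exactly one higher; if $\phi(u,v)=0$, then $d(G)\geq 2$ implies $G$ is not a single clique, so some vertex is non-central and hence $\max\{L(z)-1 : z\in V(G)\}\geq 0=\phi(u,v)$. With this repair the proof is complete.
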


With \cref{l:phi}, we are able to describe the distance between two vertices in a block graph.  

\begin{Proposition}
\label{d:uv}
Let $G$ be a block graph, and $u, v \in V(G)$. Then $$
d(u,v) = L(u) + L(v) + \delta(u,v) - 2\phi(u,v) - \rho(u,v).
$$
\end{Proposition}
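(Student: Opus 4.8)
The plan is to prove the identity by a direct analysis of the unique $(u,v)$-geodesic $P_{uv}$, organized as a case split determined by the relative positions of $u$ and $v$ with respect to the central vertices. Before splitting into cases, I would record one auxiliary fact relating levels to ancestors: if $z$ is an ancestor of a vertex $x$ (i.e. $z$ lies on the geodesic from the nearest central vertex $w_x$ to $x$), then every vertex of that geodesic other than $w_x$ has $w_x$ as its nearest central vertex, so $L(z) = d(w_x,z)$ and hence $d(x,z) = d(w_x,x) - d(w_x,z) = L(x) - L(z)$. Consequently, if $u$ and $v$ share a common ancestor and $z$ is their \emph{deepest} common ancestor, then $L(z) = \phi(u,v)$ by definition of $\phi$. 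I would also observe that, since each block is a clique, within any block one vertex (its entry point, closest to the central region) is the parent of all the others, so every non-entry vertex of a block lies exactly one level below the entry point.

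With these facts in hand, I would run the case analysis, using \cref{l:phi} to keep the cases exhaustive and non-overlapping. In \textbf{Case A}, one of $u,v$ is a central vertex, so $\phi=\rho=0$ by \cref{l:phi}(2); if $v$ is rooted at the same central vertex as (or is) $u$ the geodesic never meets a second central vertex, giving $\delta=0$ and $d(u,v)=L(u)+L(v)$, whereas if $v$ is rooted at a different central vertex the geodesic crosses one central-block edge, giving $\delta=1$ and $d(u,v)=L(u)+L(v)+1$ (this also covers $u,v$ both central). In \textbf{Case B} ($u,v$ in opposite branches, rooted at distinct central vertices $w_u\neq w_v$) we again have $\phi=\rho=0$, and since the central block is a clique $d(w_u,w_v)=1$, so $\delta=1$ and $d(u,v)=L(u)+L(v)+1$. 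In \textbf{Case C} ($u,v$ in different branches rooted at a common central vertex $w$) we have $\phi=\rho=0$, $\delta=0$, and the geodesic through $w$ has length $L(u)+L(v)$. Each of these matches the claimed formula.

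The remaining \textbf{Case D} treats $u,v$ in the same branch, with deepest common ancestor $z$ of level $\phi(u,v)=L(z)$ and $\delta=0$ by \cref{l:phi}(3). Here I would split on whether $P_{uv}$ ascends to $z$. If $u$ and $v$ lie in one common block with neither equal to the entry point $z$ (so they are adjacent siblings), the geodesic is the single clique edge $uv$, which contains no common ancestor, forcing $\rho=1$; since $L(u)=L(v)=L(z)+1$, the right-hand side equals $(L(z)+1)+(L(z)+1)-2L(z)-1 = 1 = d(u,v)$. Otherwise $z$ is a cut vertex lying on $P_{uv}$ (including the subcase where one of $u,v$ is an ancestor of the other), so $\rho=0$ and $d(u,v)=d(u,z)+d(z,v)=(L(u)-L(z))+(L(v)-L(z))=L(u)+L(v)-2\phi(u,v)$.

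The hard part is not any single computation but pinning down the role of $\rho$: isolating the unique situation in which the geodesic does \emph{not} climb to the deepest common ancestor but instead shortcuts along a clique edge, which is exactly the sibling subcase of Case D, and verifying that this is the only place $\rho=1$ can occur. Making this rigorous rests on the two preliminary observations — that every non-entry vertex of a block sits one level below its entry point, and that the deepest common ancestor is a cut vertex separating $u$ and $v$ in all non-shortcut configurations — together with a careful use of \cref{l:phi} to confirm the five cases are exhaustive and mutually exclusive.
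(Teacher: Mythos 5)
Your overall strategy---rooting $G$ at the central vertices, the auxiliary facts about levels of ancestors, and the case split A--D---is essentially the paper's, and Cases A, B, C are fine. The genuine gap is in Case D: you claim that the clique shortcut (hence $\rho(u,v)=1$) occurs \emph{only} when $u$ and $v$ are adjacent siblings inside one common block, and that ``otherwise $z$ is a cut vertex lying on $P_{uv}$.'' That dichotomy is not exhaustive. The shortcut can occur arbitrarily far \emph{above} $u$ and $v$: if the geodesics from the deepest common ancestor $z$ to $u$ and to $v$ leave $z$ through two distinct children $a\neq b$ lying in the \emph{same} block, then (since $a$ and $b$ are adjacent in that clique) the unique geodesic is $u,\dots,a,b,\dots,v$, which never visits $z$, and no vertex on it is a common ancestor of $u$ and $v$; so $\rho(u,v)=1$ even though $u$ and $v$ need not be adjacent nor share a block. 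Concretely, let $G$ consist of two triangles $\{w,a,b\}$ and $\{w,c,d\}$ glued at $w$, together with pendant edges $au$, $bv$, $cx$, $dy$. Then $W(G)=\{w\}$, the vertices $u$ and $v$ lie in the same branch, their only common ancestor is $w$, so $z=w$ and $\phi(u,v)=0$; but $P_{uv}=(u,a,b,v)$ avoids $w$, so $\rho(u,v)=1$ and $d(u,v)=3=L(u)+L(v)-2\phi(u,v)-1$, whereas your ``otherwise'' subcase asserts $z\in P_{uv}$ and outputs $d(u,v)=L(u)+L(v)-2\phi(u,v)=4$. Your closing paragraph, which insists that the sibling subcase is ``the only place $\rho=1$ can occur,'' is therefore exactly the point that fails.

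The repair is local: in Case D, split instead on the two children of $z$ through which the $(z,u)$- and $(z,v)$-geodesics depart (these children are distinct unless one of $u,v$ is an ancestor of the other, precisely because $z$ is the \emph{deepest} common ancestor). If they lie in different blocks incident to $z$, then $z$ is a cut vertex separating $u$ from $v$, so $z\in P_{uv}$, $\rho=0$, and $d(u,v)=L(u)+L(v)-2\phi(u,v)$; if they lie in the same block, the geodesic uses the clique edge joining them, no common ancestor lies on $P_{uv}$, so $\rho=1$ and $d(u,v)=L(u)+L(v)-2\phi(u,v)-1$, your sibling situation being only the special instance where $u$ and $v$ are themselves those two children. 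Note that the paper's proof avoids this trap by casing directly on the defining condition of $\rho$ (whether $P_{uv}$ contains a common ancestor) and asserting the distance in each case, rather than attempting the structural characterization that your write-up gets wrong.
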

\begin{proof}
Assume that $u$ and $v$ are in different or opposite branches, or one of them is a central vertex. Then $d(u,v) = L(u)+L(v)$, if there is only one central vertex on  $P_{uv}$. If there are two central vertices on $P_{uv}$, then $d(u, v)=L(u) + L(v) +1$. By \cref{l:phi},  the result follows. 

Assume, otherwise, $u$ and $v$ are in the same branch and none of them is a central vertex. Then $P_{uv}$ contains no central vertices and hence $\delta(u,v)=0$. If $P_{uv}$ contains a common ancestor of $u$ and $v$, then $d(u,v) = L(u) + L(v) - 2 \phi(u,v)$, as $\rho(u,v)=0$. Otherwise, 
$d(u,v) = L(u) + L(v) - 2 \phi(u,v)-1$.  Hence the result follows. See $t_1, t_2$ on Figure 1 as an example.
\end{proof}

For a block graph $G$, we define
\begin{eqnarray*}
\ve(G) = & \left\{
\begin{array}{ll}
1, & \mbox{ if  $|W(G)|=1$} \\ [0.3cm]
0, & \mbox{ if  $|W(G)| \geq 2$}.
\end{array}
\right.
\end{eqnarray*}


\section{A tight lower bound for the radio number of block graphs}


In this section, we present the following results: 
\begin{itemize}
    \item A lower bound for the radio  number of block graphs (Theorem \ref{thm:lb}). 
    \item Three necessary and sufficient conditions to achieve this bound (Theorems \ref{thm:lb}, \ref{thm:main} and \ref{thm:main2}). 
\item Three sufficient conditions to achieve this bound (Theorem \ref{thm:suf}).
\end{itemize}

Let $f$ be a radio labeling of a graph $G$. Since $f$ is injective, without loss of generality, we may assume that $f$ induces an ordering $\vec{u}$ of $V(G)$, $\vec{u}=(u_{0}, u_{1}, u_{2}, ...,u_{|V(G)|-1})$, where 
$$
0 = f(u_{0}) < f(u_{1}) < f(u_{2}) < ... < f(u_{|V(G)|-1}) = span(f).
$$

A radio labeling $f$ with $span(f)=rn(G)$ is called an {\it optimal radio labeling}.  An ordering $\vec{u}=(u_{0}, u_{1}, u_{2}, ...,u_{|V(G)|-1})$ of $V(G)$ is called {\it optimal} if it is induced by an optimal radio labeling.

Now we present a lower bound for the radio number of block graphs. 

\begin{Theorem}
\label{thm:lb} 
Let $G$ be a block graph of order $p$ and diameter $d(G) \geq 2$. Then 
\begin{equation}
\label{eq:lb}
rn(G) \geq (p-1)(d(G)+\ve(G))-2L(G)+\ve(G).
\end{equation}
Moreover, the equality holds if and only if there exists an ordering $\vec{u}=(u_0,u_1,\ldots,u_{p-1})$ of $V(G)$ such that all the following hold: 
\begin{enumerate}[\rm (a)]
    \item $L(u_{0})+L(u_{p-1})= \ve(G)$;  
    \item for $0 \leq i \leq p-2$, $u_{i}$ and $u_{i+1}$ are in different branches when $|W(G)|$ = 1, and are in  opposite branches when $|W(G)| \geq 2$ (i.e. $\rho(u_i,u_{i+1}) = \phi(u_i,u_{i+1}) = 0$ and $\delta(u_i,u_{i+1}) = 1 - \ve(G)$);
    \item the mapping $f$ on $V(G)$ defined by $f(u_0)=0$ and $f(u_{i+1})=f(u_i) + d(G) + \ve(G) - L(u_{i+1}) -  L(u_i)$ for $0 \leq i \leq p-2$ is 
  a radio labeling for $G$.
\end{enumerate}
\end{Theorem}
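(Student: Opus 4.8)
The plan is to prove the bound and the characterization together by analysing the sum of consecutive gaps of an arbitrary radio labeling. First I would fix a radio labeling $f$ with induced ordering $\vec{u}=(u_0,\ldots,u_{p-1})$ and write $span(f)=\sum_{i=0}^{p-2}(f(u_{i+1})-f(u_i))$. Applying the radio condition \eqref{def:rn} with $k=d(G)$ to each consecutive pair gives $f(u_{i+1})-f(u_i)\geq d(G)+1-d(u_i,u_{i+1})$, hence $span(f)\geq (p-1)(d(G)+1)-\sum_{i=0}^{p-2}d(u_i,u_{i+1})$. The crux is to bound $\sum d(u_i,u_{i+1})$ from above, and for this I would substitute the distance formula of \cref{d:uv}.

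Next I would expand $\sum_{i=0}^{p-2}d(u_i,u_{i+1})=\sum_{i}\big[L(u_i)+L(u_{i+1})+\delta(u_i,u_{i+1})-2\phi(u_i,u_{i+1})-\rho(u_i,u_{i+1})\big]$ and treat each piece. The level terms telescope: every $L(u_i)$ is counted twice except the two endpoints, so $\sum_i (L(u_i)+L(u_{i+1}))=2L(G)-L(u_0)-L(u_{p-1})$. For the rest I would use $\delta(u_i,u_{i+1})\leq 1-\ve(G)$ (since $\delta\equiv 0$ when $|W(G)|=1$ by \cref{l:phi} (3), while $\delta\leq 1$ otherwise), together with $\phi\geq 0$ and $\rho\geq 0$. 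Combining these yields $span(f)\geq (p-1)(d(G)+\ve(G))-2L(G)+L(u_0)+L(u_{p-1})$. Finally, since central vertices have level $0$ and at most one vertex can have level $0$ when $|W(G)|=1$, I would note $L(u_0)+L(u_{p-1})\geq \ve(G)$, which completes the bound \eqref{eq:lb}.

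For the characterization I would track exactly when every inequality above is an equality. Equality in \eqref{eq:lb} forces: (i) $f(u_{i+1})-f(u_i)=d(G)+1-d(u_i,u_{i+1})$ for all $i$; (ii) $\delta(u_i,u_{i+1})=1-\ve(G)$ and $\phi(u_i,u_{i+1})=\rho(u_i,u_{i+1})=0$ for all $i$; and (iii) $L(u_0)+L(u_{p-1})=\ve(G)$. Statement (iii) is condition (a), and (ii) is precisely the parenthetical form of condition (b); here I would invoke \cref{l:phi} (2) so that $\phi=\rho=0$ means consecutive vertices lie in different or opposite branches (or one is central), while the value $\delta=1-\ve(G)$ distinguishes ``different branches'' ($|W(G)|=1$) from ``opposite branches'' ($|W(G)|\geq 2$), equating $\delta=1$ with the geodesic meeting two central vertices. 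Substituting $d(u_i,u_{i+1})=L(u_i)+L(u_{i+1})+1-\ve(G)$ into (i) recovers the recurrence of condition (c), giving the forward direction. For the converse, given an ordering satisfying (a)--(c), I would sum the recurrence of (c) to get $span(f)=(p-1)(d(G)+\ve(G))-2L(G)+L(u_0)+L(u_{p-1})$ and apply (a) to meet the bound exactly; since (c) guarantees $f$ is a radio labeling and the gaps $d(G)+1-d(u_i,u_{i+1})\geq 1$ are positive (so $\vec{u}$ is genuinely increasing), this $f$ is optimal.

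The main obstacle I anticipate is the bookkeeping in the equality analysis: deducing from the single scalar equality in \eqref{eq:lb} that each pointwise inequality is tight, and then translating the arithmetic equalities $\delta(u_i,u_{i+1})=1-\ve(G)$ and $\phi(u_i,u_{i+1})=\rho(u_i,u_{i+1})=0$ into the branch description of condition (b) through \cref{l:phi} (2), handling $|W(G)|=1$ and $|W(G)|\geq 2$ separately. A related subtle point is justifying $L(u_0)+L(u_{p-1})\geq \ve(G)$: when $|W(G)|=1$ only the unique central vertex has level $0$, so the two endpoints cannot both have level $0$ and their level sum is at least $1=\ve(G)$, whereas when $|W(G)|\geq 2$ the bound $\ve(G)=0$ is immediate.
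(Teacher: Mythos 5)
Your proposal is correct and follows essentially the same route as the paper: summing the consecutive-gap inequalities, substituting the distance formula of \cref{d:uv}, telescoping the level terms, and bounding $\delta$, $\phi$, $\rho$, and $L(u_0)+L(u_{p-1})$, with the equality characterization obtained by tracking tightness of each inequality. The only cosmetic difference is that you unify the two cases via $\delta(u_i,u_{i+1})\leq 1-\ve(G)$ where the paper treats $|W(G)|=1$ and $|W(G)|\geq 2$ separately, and you spell out the equality bookkeeping that the paper leaves as ``easy to see.''
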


\begin{proof} Suppose $f$ is a radio labeling of $G$ with $0 = f(u_{0}) < f(u_{1}) < f(u_{2}) < ... < f(u_{p-1})$.  By definition, $f(u_{i+1}) - f(u_{i}) \geq d(G)+1-d(u_{i},u_{i+1})$, for $0 \leq i \leq p-2$. Summing up these $p-1$ inequalities, we obtain 
\be
\label{eq:sumup} 
span(f) = f(u_{p-1}) \geq (p-1)(d(G)+1) - \displaystyle\sum_{i=0}^{p-2} d(u_{i},u_{i+1}). \ee

\noindent
\textsf{Case 1}: $|W(G)|=1$. Then $\delta(u_{i},u_{i+1})$ = 0 for $0 \leq i \leq p-2$. Since $G$ has only one central vertex,  $L(u_{0})+L(u_{p-1}) \geq 1$. Applying \cref{d:uv} and \cref{l:phi}
to (\ref{eq:sumup}), we obtain
\bean
span(f) 
&\geq& (p-1)(d(G)+1) - \displaystyle\sum_{i=0}^{p-2} \left[L(u_{i})+L(u_{i+1})-2\phi(u_{i},u_{i+1})-\rho(u_{i},u_{i+1})\right] \\
&\geq& (p-1)(d(G)+1) - 2  
L(G) + L(u_{0})+L(u_{p-1}) \\
&\geq& (p-1)(d(G)+1)  - 2 L(G) + 1.
\eean

\noindent
\textsf{Case 2}: $|W(G)| \geq 2$. 
Since $L(u_{0}), L(u_{p-1}) \geq 0$, applying \cref{d:uv} and  \cref{l:phi} to (\ref{eq:sumup}), we obtain
\bean
span(f) 
&\geq& (p-1)(d(G)+1) - \displaystyle\sum_{i=0}^{p-2} \left[L(u_{i})+L(u_{i+1})+\delta(u_{i},u_{i+1})-2\phi(u_{i},u_{i+1})-\rho(u_{i},u_{i+1})\right]  \\ 
&\geq& (p-1)(d(G)+1) - \displaystyle\sum_{i=0}^{p-2} \left[L(u_{i})+L(u_{i+1})+1\right] \\
&=& (p-1)(d(G)+1) - 2 L(G) + L(u_{0})+L(u_{p-1})-(p-1) 
\geq (p-1)d(G) - 2 L(G). 
\eean
Hence, the lower bound is true.  In addition, it is easy to see in  Cases 1 and 2, all the equalities hold if 
and only if the moreover part is true. 
\end{proof}

For a block graph $G$, we denote the lower bound of $rn(G)$ in \cref{thm:lb} by $LB(G)$.  If  $rn(G)=LB(G)$ then we call $G$ a {\it lower bound block graph}. If $T$ is a tree and $rn(T)=LB(T)$, then $T$ is also called a {\it lower bound tree} (cf. \cite{CLS, ImprovedBound}). 

In the next two results, we prove additional characterizations (besides the moreover part in \cref{thm:lb}) of lower bound block graphs.

\begin{Theorem}\label{thm:main} Let $G$ be a block graph of order $p$ and diameter $d(G) \geq 2$. Then
$G$ is a lower bound block graph if and only if there exists an ordering $\vec{u}=(u_{0},  u_{1}, \ldots, u_{p-1})$ of $V(G)$ such that \cref{thm:lb} (a) holds and the following is true for all $0 \leq i < j \leq p-1$:
\begin{equation}\label{eq:dij} 
d(u_{i},u_{j}) \geq \displaystyle\sum_{t=i}^{j-1} [L(u_{t})+L(u_{t+1})] - (j-i)(d(G)+\ve(G)) + d(G) +1. 
\end{equation}
\end{Theorem}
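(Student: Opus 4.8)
The plan is to establish the equivalence by relating the condition \eqref{eq:dij} to the telescoping mechanism that underlies the lower bound in \cref{thm:lb}. The key algebraic identity to keep in mind is that, for an ordering $\vec{u}$ satisfying \cref{thm:lb}(a), the span of the canonical labeling $f$ defined in \cref{thm:lb}(c) telescopes to exactly $LB(G)$. Indeed, summing $f(u_{i+1})-f(u_i) = d(G)+\ve(G)-L(u_i)-L(u_{i+1})$ over $0 \le i \le p-2$ gives $span(f)=(p-1)(d(G)+\ve(G))-2L(G)+L(u_0)+L(u_{p-1})$, and invoking (a) this equals $LB(G)$. So the strategy is to show that \eqref{eq:dij} is precisely the condition guaranteeing that this canonical $f$ (with consecutive gaps as small as the radio constraints between consecutive vertices permit) is in fact a legitimate radio labeling, i.e. satisfies \eqref{def:rn} for \emph{all} pairs, not just consecutive ones.

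For the forward direction, I would start from the assumption that $G$ is a lower bound block graph and take an optimal ordering $\vec{u}$ witnessing equality in \cref{thm:lb}; by the moreover part, (a) holds and the optimal labeling has consecutive gaps exactly $d(G)+\ve(G)-L(u_i)-L(u_{i+1})$. Then for any $i<j$ I would telescope to compute $f(u_j)-f(u_i)=\sum_{t=i}^{j-1}[d(G)+\ve(G)-L(u_t)-L(u_{t+1})]=(j-i)(d(G)+\ve(G))-\sum_{t=i}^{j-1}[L(u_t)+L(u_{t+1})]$. Since $f$ is a radio labeling, $f(u_j)-f(u_i)\ge d(G)+1-d(u_i,u_j)$. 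Combining these two relations and rearranging for $d(u_i,u_j)$ yields exactly \eqref{eq:dij}.

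For the converse, I would assume an ordering satisfies (a) and \eqref{eq:dij}, then \emph{define} $f$ by the canonical rule of \cref{thm:lb}(c) and verify it is a radio labeling. Reversing the telescoping computation above, \eqref{eq:dij} is algebraically equivalent to $f(u_j)-f(u_i)\ge d(G)+1-d(u_i,u_j)$ for all $i<j$, which is exactly the radio condition \eqref{def:rn}. One must also check $f$ is injective with strictly increasing values on $\vec{u}$, which follows because each consecutive gap $d(G)+\ve(G)-L(u_i)-L(u_{i+1})$ is positive (using $L(u_i)+L(u_{i+1})\le \phi$-type bounds from \cref{l:phi}, or noting the gap is at least $d(G)+\ve(G)-2\max_z L(z)$ together with $d(u_i,u_{i+1})\le d(G)$). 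Having a valid radio labeling with $span(f)=LB(G)$, combined with the lower bound $rn(G)\ge LB(G)$ from \cref{thm:lb}, forces $rn(G)=LB(G)$, so $G$ is a lower bound block graph.

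The main obstacle I anticipate is the bookkeeping around the $\ve(G)$ term and the $\delta$, $\rho$, $\phi$ quantities when passing between the distance formula of \cref{d:uv} and the telescoped gap expression: one must be careful that \eqref{eq:dij} bundles the $+\ve(G)$ correction uniformly into each gap, whereas \cref{d:uv} distributes $\delta$, $\phi$, $\rho$ per pair. The cleanest route is to treat the two cases $|W(G)|=1$ and $|W(G)|\ge 2$ in parallel through the single parameter $\ve(G)$, verifying that the equivalence of \eqref{eq:dij} with the radio condition holds identically in both, and to observe that condition (a), $L(u_0)+L(u_{p-1})=\ve(G)$, is exactly what aligns the telescoped span with $LB(G)$ in either case.
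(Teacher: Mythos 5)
Your proposal is correct and follows essentially the same route as the paper: both directions rest on telescoping the canonical labeling $f(u_{i+1})=f(u_i)+d(G)+\ve(G)-L(u_i)-L(u_{i+1})$, with necessity obtained by applying the radio condition to the telescoped differences of an optimal labeling, and sufficiency by verifying that \eqref{eq:dij} makes this $f$ a radio labeling whose span, via condition (a), equals $LB(G)$. The only cosmetic difference is that the paper closes the sufficiency direction by checking condition (b) of \cref{thm:lb} (taking $j=i+1$) and invoking the moreover part of that theorem, whereas you conclude directly from $rn(G)\le span(f)=LB(G)\le rn(G)$; both are valid, and your worry about injectivity is automatic since the verified radio condition already forces $f(u_j)-f(u_i)\ge d(G)+1-d(u_i,u_j)\ge 1$ for $i<j$.
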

\begin{proof}~
\textsf{Necessity}:~Suppose that $rn(G) = LB(G)$. By Theorem \ref{thm:lb}, there exists an optimal radio labeling $f$ with ordering $\vec{u}=(u_{0},  u_{1}, \ldots, u_{p-1})$ of $V(G)$ such that Theorem \ref{thm:lb} (a), (b) and (c) hold. 
By Theorem \ref{thm:lb} (c), 
$$
f(u_{j}) - f(u_{i}) = \displaystyle\sum_{t=i}^{j-1} \left[f(u_{t+1}) - f(u_{t})\right] =  \displaystyle\sum_{t=i}^{j-1} \left[d(G)+\ve(G)-L(u_{t})-L(u_{t+1})\right] \geq  d(G)+1-d(u_{i},u_{j}). 
$$
Hence, \eqref{eq:dij} is true. 

\textsf{Sufficiency}:~Suppose there exists an ordering  $\vec{u}=(u_0,u_1,\ldots,u_{p-1})$ of $V(G)$ so that (a) in   \cref{thm:lb} and \eqref{eq:dij} are satisfied for all $u_i$ and $u_j$, $0 \leq i < j \leq p-1$. We now prove that conditions (b) and (c) in Theorem \ref{thm:lb} are satisfied. Take $j=i+1$ in \eqref{eq:dij} we get $d(u_i,u_{i+1}) \geq L(u_i)+L(u_{i+1})+(1-\ve(G))$, which implies  that $u_i$ and $u_{i+1}$ are in different branches when $|W(G)|=1$, and in opposite branches when $|W(G)| \geq 2$. Hence \cref{thm:lb} (b) holds. 

It remains to prove that $f$ defined in \cref{thm:lb} (c) is a radio labeling. Let $u_{i}$ and $u_{j}$ ($0 \leq i < j \leq p-1$) be two arbitrary vertices. By \eqref{eq:dij}, 
$$
f(u_{j}) - f(u_{i}) = (j-i)(d(G)+\ve(G))-\displaystyle\sum_{t=i}^{j-1}(L(u_{t})+L(u_{t+1})) 
\geq d(G) + 1 - d(u_{i},u_{j}). 
$$
Hence $f$ is a radio labeling. The proof is complete.
\end{proof}

In \cite[Theorem 3]{Daphne1} a lower bound of the radio number of trees was given. The above two results generalize \cite[Lemma 3.1]{Bantva2} and \cite[Theorem 3.2]{Bantva2}. Precisely, when $G$ is a tree then Theorem \ref{thm:lb} and Theorem \ref{thm:main} are the same as  \cite[Lemma 3.1]{Bantva2} and \cite[Theorem 3.2]{Bantva2}, respectively. 
\begin{Theorem}\label{thm:main2} Let $G$ be a block graph of order $p$ and diameter $d(G) \geq 2$. Then $G$ is a lower bound block graph 
if and only if there exists an ordering $\vec{u}=(u_{0},u_{1},...,u_{p-1})$ of $V(G)$ such that (a) and (b) in \cref{thm:lb} hold, and the following are  true: 
\begin{enumerate}[\rm (a*)]
\item for all $i$, $L(u_{i}) \leq \frac{d(G)+\ve(G)}{2}$; 
\item any two vertices $u_{i}$ and $u_{j}$ $(0 \leq i < j \leq p-1)$ in the same branch satisfy 
\be\label{eq:pij} 
\phi(u_{i},u_{j}) \leq (j-i-1)\left(\frac{d(G)+\ve(G)}{2}\right)- \left(\displaystyle\sum_{t=i+1}^{j-1}L(u_{t}) \right) -\left(\frac{1- \ve(G)}{2}\right).  \ee
\end{enumerate}
\end{Theorem}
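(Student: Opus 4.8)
The plan is to derive \cref{thm:main2} from \cref{thm:main}, which already characterizes lower bound block graphs via the family of inequalities \eqref{eq:dij}. Since \eqref{eq:dij} with $j=i+1$ forces condition (b), it suffices to prove that, for every ordering $\vec u$ satisfying (a) and (b), the full system \eqref{eq:dij} (over all $0\le i<j\le p-1$) is equivalent to the pair (a*)--(b*). The first step is to rewrite \eqref{eq:dij} as a bound on $\phi$: substituting $d(u_i,u_j)=L(u_i)+L(u_j)+\delta(u_i,u_j)-2\phi(u_i,u_j)-\rho(u_i,u_j)$ from \cref{d:uv} into \eqref{eq:dij} and cancelling the common $L(u_i)+L(u_j)$, the inequality \eqref{eq:dij} becomes
\[
2\phi(u_i,u_j)+2\sum_{t=i+1}^{j-1}L(u_t)\le (j-i-1)\bigl(d(G)+\ve(G)\bigr)+\delta(u_i,u_j)-\rho(u_i,u_j)+\ve(G)-1 ,
\]
which I abbreviate as $(\dagger_{ij})$. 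The whole argument is then bookkeeping: match $(\dagger_{ij})$ against (a*) and (b*), reading off $\phi,\delta,\rho$ from the mutual position of $u_i,u_j$ via \cref{l:phi}.

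For necessity, assume $(\dagger_{ij})$ for all pairs. Taking $j=i+2$ isolates the single interior vertex $u_{i+1}$. By \cref{l:phi}, the pair $(u_i,u_{i+2})$ either has $\phi=\rho=0$ (different or opposite branches, or a central endpoint) or lies in a common branch with $\delta=0$; using $\phi\ge0$ and $\delta\le1$, in each case $(\dagger_{i,i+2})$ reduces to $2L(u_{i+1})\le d(G)+\ve(G)$, and the endpoints $u_0,u_{p-1}$ satisfy (a*) automatically from (a) since $d(G)\ge2$. This yields (a*). For (b*), I specialise $(\dagger_{ij})$ to a same-branch pair, where $\delta=0$ by \cref{l:phi}, discard the nonnegative term $-\rho(u_i,u_j)$, and divide by $2$; this reproduces exactly \eqref{eq:pij}.

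For sufficiency I must show that (a*) and (b*) force $(\dagger_{ij})$ for every pair, splitting on position as classified by \cref{l:phi}: the pairs with $\phi=\rho=0$ (not in a common branch), subdivided by whether $\delta=1$ or $\delta=0$; and the same-branch pairs ($\delta=0$), subdivided by $\rho=0$ or $\rho=1$. I bound $\sum_{t=i+1}^{j-1}L(u_t)$ by (a*) and $\phi(u_i,u_j)$ by \eqref{eq:pij}. The opposite-branch case ($\delta=1$) and the same-branch case with $\rho=0$ reduce immediately to (a*) and to \eqref{eq:pij}, respectively.

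The main obstacle is the remaining two cases—same branch with $\rho=1$, and (when $|W(G)|\ge2$) different branches, i.e.\ $\phi=\rho=0$ with $\delta=0$ and $\ve(G)=0$—because there $(\dagger_{ij})$ is tighter by exactly one unit than the raw estimate that (b*) or (a*) supplies. I would try to close this unit gap by integrality: $L(u_t)$ and $\phi(u_i,u_j)$ are integers while the bounds in (a*)--(b*) carry the half-integers $\tfrac{d(G)+\ve(G)}{2}$ and $\tfrac{1-\ve(G)}{2}$, so in the favourable instances the slack is a genuine half-integer and the integer left-hand side of $(\dagger_{ij})$ is pushed down by one for free. Verifying that this parity bookkeeping is uniform—simultaneously tracking the parities of $j-i$, of $d(G)$, and of $\phi(u_i,u_j)$—and, in the instances where it is not immediate, arguing that the ordering may be readjusted among interior vertices of equal level so as to restore \eqref{eq:dij} without disturbing (a), (b), (a*) or (b*), is the delicate heart of the proof and the step on which I expect essentially all of the difficulty to concentrate.
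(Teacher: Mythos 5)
Your reduction to \cref{thm:main} and your necessity argument are exactly the paper's: the paper likewise extracts (a*) by applying \eqref{eq:dij} to the triples $(u_{i-1},u_i,u_{i+1})$, and extracts (b*) by discarding the nonnegative term $\rho(u_i,u_j)$ coming from \cref{d:uv}. Your ``easy'' sufficiency cases are also handled exactly as in the paper. The divergence is at the two cases you flag as the delicate heart. The paper believes it has settled them, but it has not: it asserts $d(u_i,u_j)=L(u_i)+L(u_j)+1-\ve(G)$ for every different-or-opposite-branch pair, which is false for a different-branch pair when $\ve(G)=0$ (there the distance is $L(u_i)+L(u_j)$), and it settles same-branch pairs ``by direct calculation,'' a calculation that reproduces \eqref{eq:dij} only when $\rho(u_i,u_j)=0$. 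So you have correctly isolated a gap that the paper's own proof papers over.

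However, your proposed repair (parity, or reordering interior vertices of equal level) cannot succeed, because the sufficiency direction is false as stated. Let $G$ be the block graph whose blocks are a triangle $wxy$ and the edges $xu$, $yv$, $wp_1$, $p_1p_2$, $p_2p_3$. One checks that $W(G)=\{w\}$, so $\ve(G)=1$; that $d(G)=5$; that $L(x)=L(y)=L(p_1)=1$, $L(u)=L(v)=L(p_2)=2$, $L(p_3)=3$; and that $G$ has exactly two branches, $B_1=\{x,y,u,v\}$ and $B_2=\{p_1,p_2,p_3\}$. The ordering $\vec u=(x,\,p_3,\,y,\,p_1,\,u,\,p_2,\,v,\,w)$ satisfies (a), (b), (a*), and every instance of (b*); in particular, for the same-branch pair $(x,y)$ at positions $(0,2)$ one has $\phi(x,y)=0\le 1\cdot 3-L(p_3)-0=0$. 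Yet \eqref{eq:dij} fails for that pair: its right-hand side is $(1+3)+(3+1)-2\cdot 6+6=2$, while $d(x,y)=1$; the lost unit is precisely $\rho(x,y)=1$, and both sides of your $(\dagger_{ij})$ with $(i,j)=(0,2)$ are even, so integrality gives nothing. Moreover, no reordering helps: condition (a) forces $w$ to an end of the ordering (it is the only vertex of level $0$), and (b) then forces the remaining seven vertices to alternate $B_1,B_2,B_1,\ldots,B_1$, so $p_3$ is always flanked by two vertices $a,b\in B_1$. Every pair in $B_1$ has $2\phi+\rho\ge 1$, i.e.\ $d(a,b)\le L(a)+L(b)-1$, whereas the labeling of \cref{thm:lb}(c) gives $f(b)-f(a)=6-L(a)-L(b)$, which is strictly less than $d(G)+1-d(a,b)\ge 7-L(a)-L(b)$. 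Hence no ordering satisfies \cref{thm:lb}(a)(b)(c), so $rn(G)>LB(G)=7\cdot 6-2\cdot 12+1=19$: this $G$ satisfies all hypotheses of the sufficiency direction but is not a lower bound block graph.

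The conclusion is that your gap is not closable by any amount of parity bookkeeping; the statement itself must be repaired first. Condition (b*) has to retain the $\rho$ term, i.e.\ same-branch pairs should be required to satisfy $2\phi(u_i,u_j)+\rho(u_i,u_j)\le (j-i-1)(d(G)+\ve(G))-2\sum_{t=i+1}^{j-1}L(u_t)-(1-\ve(G))$, and when $|W(G)|\ge 2$ one additionally needs the strict inequality $2\sum_{t=i+1}^{j-1}L(u_t)\le (j-i-1)\,d(G)-1$ for pairs whose geodesic meets only one central vertex (different branches at a common central vertex, or a central vertex paired with a vertex of a branch at it). With those corrections, the equivalence with \eqref{eq:dij} becomes the routine bookkeeping you describe, with no hard cases left. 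In short: your diagnosis of where all the difficulty concentrates is exactly right, but the difficulty is a counterexample to the theorem, not a missing technical lemma.
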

\begin{proof}~\textsf{Necessity}:~Suppose $rn(G) = LB(G)$.  
By Theorem \ref{thm:main}, there exists an ordering $\vec{u}=(u_{0},u_{1},...,u_{p-1})$ of $V(G)$ such that \eqref{eq:dij} in \cref{thm:main} holds for any two vertices $u_i$ and $u_j, 0 \leq i < j \leq p-1$. Applying \eqref{eq:dij} with $d(u_{i-1},u_{i+1})$ and by \cref{d:uv}, we obtain 
$$
2L(u_{i}) \leq d(G) + 2 \ve(G) + \delta(u_{i-1},u_{i+1}) - 1.  
$$
Note that $\ve(G)=1$ when $|W(G)|=1$, and $\rho(u_{i-1}, u_{i+1})=1$ is true only when $|W(G)| \geq 2$.  Thus (a*) is satisfied. 

To prove (b*), assume $u_{i}$ and $u_{j}$ are in the same branch. Then $\delta(u_i, u_j)=0$. Combining \cref{d:uv} (for $d(u_i, u_j)$) with  \eqref{eq:dij} in \cref{thm:main}, as $\rho(u_i,u_j) \geq 0$, we obtain \eqref{eq:pij}.  Hence (b*) is true.

\textsf{Sufficiency}:~Suppose there exists an ordering $\vec{u} = (u_{0},u_{1},...,u_{p-1})$ of $V(G)$ such that (a) and (b) in \cref{thm:lb} as well as (a*) and (b*) of the statement hold. It is enough to prove that  \eqref{eq:dij} in \cref{thm:main} holds for any $u_i$ and $u_j$, $0 \leq i < j \leq p-1$. 

If $u_i$ and  $u_j$ are in different or opposite branches, then $d(u_{i},u_{j}) = L(u_{i})+L(u_{j})+ 1 - \ve(G)$. By (a*), the right-hand-side of \eqref{eq:dij} is at most $d(G)+1$. Hence \eqref{eq:dij} holds. 
Assume $u_{i}$ and $u_{j}$ are in the same branch. Then $j \geq i+2$. 
By direct calculation, \eqref{eq:dij} can be obtained by  \cref{d:uv} and \eqref{eq:pij}. Hence the proof is complete. 
\end{proof}

Now we turn to the last part of this section. Namely, we present three sufficient conditions for a block graph to be a lower bound block graph (i.e. the lower bound in \cref{thm:lb} is achieved). 

\begin{Theorem}\label{thm:suf} Let $G$ be a block graph of order $p$ and diameter $d(G) \geq 2$. Then $G$ is a lower bound block graph if   
there exists an ordering $\vec{u}=(u_{0},u_{1},\ldots,u_{p-1})$ of $V(G)$ such that (a) and (b) in \cref{thm:lb} hold, and  
one of the following is true:
\begin{enumerate}[\rm (i)]
  \item $\min\{d(u_{i},u_{i+1}),d(u_{i+1},u_{i+2})\} \leq (d(G)+1-\ve(G))/2$, for all $0 \leq i \leq p-3$;
  \item $d(u_{i},u_{i+1}) \leq (d(G)+1+\ve(G))/2$, for all $0 \leq i \leq p-2$;
  \item For all $i$, $L(u_{i}) \leq (d(G)+\ve(G))/2$; and whenever $u_{i}$ and $u_{j}$ are in the same branch then $j-i \geq d(G)$.
\end{enumerate}
\end{Theorem}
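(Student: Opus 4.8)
The plan is to prove \cref{thm:suf} by showing that each of the three conditions (i), (ii), (iii), together with (a) and (b) of \cref{thm:lb}, implies inequality \eqref{eq:dij} of \cref{thm:main}, which by that theorem is equivalent to $G$ being a lower bound block graph. Since \cref{thm:main} already does the heavy lifting, the task reduces to a verification of \eqref{eq:dij} for all pairs $0 \le i < j \le p-1$ under each hypothesis. For orientation I would rewrite \eqref{eq:dij} using \cref{d:uv}: because (b) forces consecutive vertices into different/opposite branches, I would first dispatch the case $j = i+1$ (where \eqref{eq:dij} reduces to $d(u_i,u_{i+1}) \ge L(u_i)+L(u_{i+1})+1-\ve(G)$, which is exactly what (b) guarantees), and then focus on $j \ge i+2$.

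For condition (iii) the argument should be the cleanest, so I would treat it first. If $u_i,u_j$ lie in different or opposite branches, then $d(u_i,u_j)=L(u_i)+L(u_j)+1-\ve(G)$ and the bound $L(u_t)\le (d(G)+\ve(G))/2$ makes the right side of \eqref{eq:dij} at most $d(G)+1$, so \eqref{eq:dij} holds just as in the sufficiency proof of \cref{thm:main2}. If they lie in the same branch, the hypothesis $j-i\ge d(G)$ kicks in: I would bound the right-hand side of \eqref{eq:dij} from above, using $L(u_t)\le (d(G)+\ve(G))/2$ to control $\sum_{t=i}^{j-1}[L(u_t)+L(u_{t+1})]$, and check that the resulting expression is dominated by $d(u_i,u_j)$, which is at least $1$. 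The point is that each unit increase in $j-i$ costs the right side a full $d(G)+\ve(G)$ while contributing at most $d(G)+\ve(G)$ in level-sums, so once $j-i$ is as large as $d(G)$ the telescoped slack is enough; this is essentially the observation that in condition (iii) the ``same-branch'' pairs are spread far apart in the ordering.

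For conditions (i) and (ii) I would argue by summing the per-step distances. The key identity, obtained by applying \cref{d:uv} to each consecutive pair and telescoping, is that the span produced by $f$ in \cref{thm:lb}(c) over the window from $i$ to $j$ equals $(j-i)(d(G)+\ve(G)) - \sum_{t=i}^{j-1}[L(u_t)+L(u_{t+1})]$, and \eqref{eq:dij} asserts this exceeds $d(G)+1-d(u_i,u_j)$. Using $d(u_i,u_{i+1})=L(u_i)+L(u_{i+1})+1-\ve(G)$ from (b), I would convert the hypotheses into bounds on consecutive level-sums: under (ii), $L(u_i)+L(u_{i+1}) = d(u_i,u_{i+1})-1+\ve(G) \le (d(G)+3\ve(G)-1)/2$, and under (i) the same bound holds for at least every other consecutive pair. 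Summing these over the window $t=i,\ldots,j-1$ and comparing against $d(u_i,u_j)\ge 1$ should close \eqref{eq:dij}; condition (i)'s ``$\min$'' form means I must pair up consecutive steps and bound each pair, so the even/odd parity of $j-i$ will generate a couple of subcases to handle separately.

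The main obstacle I anticipate is the bookkeeping in condition (i): because only the smaller of each pair $\{d(u_i,u_{i+1}),d(u_{i+1},u_{i+2})\}$ is controlled, I cannot bound every single step, and I must instead aggregate over consecutive pairs, which forces a careful split according to whether $j-i$ is even or odd and whether the ``good'' step in each pair is the first or the second. A secondary subtlety throughout is keeping the $\ve(G)$ terms straight, since $\ve(G)\in\{0,1\}$ shifts every threshold by a half-integer and the cases $|W(G)|=1$ versus $|W(G)|\ge 2$ must both be checked; I would verify the tightest instance (typically $d(G)$ even with $\ve(G)=1$, or $d(G)$ odd with $\ve(G)=0$) to make sure no off-by-one slack is lost. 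Apart from this parity accounting, each case is a routine substitution of \cref{d:uv} into \eqref{eq:dij}, so I expect the proof to be short once the window-summation identity is set up.
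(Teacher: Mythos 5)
Your reduction to \eqref{eq:dij} of \cref{thm:main} is a legitimate route (the paper instead verifies conditions (a*) and (b*) of \cref{thm:main2}, but that theorem is itself proved via \cref{thm:main}, so the two routes are essentially equivalent), and your treatment of condition (ii) is correct as sketched: there every step satisfies $L(u_t)+L(u_{t+1})\le (d(G)+3\ve(G)-1)/2$, so for $j-i\ge 2$ the right-hand side of \eqref{eq:dij} telescopes to at most $\ve(G)\le 1\le d(u_i,u_j)$. However, for conditions (i) and (iii) the step ``compare against $d(u_i,u_j)\ge 1$'' genuinely fails, and this is where the real work of the theorem lies.

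For (i), take $\ve(G)=1$ and $j-i=2$: the hypothesis controls only one of the two steps, the other being bounded only by $L(u_t)+L(u_{t+1})\le d(G)$, so your aggregate bound leaves the right-hand side of \eqref{eq:dij} as large as $d(G)/2-1$ (and $d(G)/2-2$ for $j-i=3$), while $d(u_i,u_j)$ can equal $1$ for a deep same-branch pair; thus for large $d(G)$ the comparison with $1$ is false precisely at the small windows $j-i\in\{2,3\}$ (for $j-i\ge 4$ your aggregation does close). The missing idea is that a small distance forces a high common ancestor: you must invoke \cref{d:uv}, $d(u_i,u_j)=L(u_i)+L(u_j)-2\phi(u_i,u_j)-\rho(u_i,u_j)$, together with $\phi(u_i,u_j)\le\min\{L(u_i),L(u_j)\}$ from \cref{l:phi}, and then apply hypothesis (i) on the side of the endpoint attaining the minimum; for instance, if $L(u_i)+L(u_{i+1})\le d(G)/2$ then $\phi(u_i,u_{i+2})+L(u_{i+1})\le L(u_i)+L(u_{i+1})\le d(G)/2$, which is exactly what \eqref{eq:dij} requires for $j=i+2$. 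This cancellation between the endpoint levels (which appear positively in \eqref{eq:dij}) and $\phi$ is precisely the content of the paper's separate sub-cases $j-i=2$ and $j-i=3$, phrased there as $\phi(u_i,u_j)\le P_{i,j}$, i.e.\ \cref{thm:main2}(b*). For (iii), your accounting is self-cancelling as stated: if each step costs $d(G)+\ve(G)$ and contributes up to $d(G)+\ve(G)$ in level sums, the net gain per step is zero, so the right-hand side of \eqref{eq:dij} never drops below $d(G)+1$ no matter how large $j-i$ is, and the hypothesis $j-i\ge d(G)$ buys you nothing. The fix is the second per-step bound forced by \cref{thm:lb}(b) and the diameter, namely $L(u_t)+L(u_{t+1})=d(u_t,u_{t+1})-1+\ve(G)\le d(G)-1+\ve(G)$, which yields a net gain of $1$ per step and hence a right-hand side at most $d(G)+1-(j-i)\le 1$ once $j-i\ge d(G)$; the paper's Sub-Cases 1.3 and 2.2 rest on exactly this pair of bounds (individual levels together with consecutive level sums), organized by a parity analysis.
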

\begin{proof} Assume (a) and (b) in  \cref{thm:lb} are true.  We prove if one of  (i), (ii), (iii) holds, then the ordering $\vec{u}=(u_{0}, u_{1},\ldots, u_{p-1})$ of $V(G)$ satisfies (a*) and (b*) of Theorem  \ref{thm:main2}.  Denote the right-hand-side of \eqref{eq:pij} by 
$
P_{i,j}.
$
To show \cref{thm:main2} (b*) we verify that  $\phi(u_i, u_j) \leq P_{i,j}$ for any vertices $u_i$ and $u_j$ from the same  branch.  We prove this by considering two cases. 

\noindent
\textsf{Case 1.} \ $|W(G)|$ = 1. Then   $\ve = 1$. 

\textsf{Sub-Case 1.1} 
Suppose (i) holds.  By \cref{thm:lb} (b), for each $1 \leq i \leq p-2$, $L(u_{i}) \leq \min \{L(u_{i})+L(u_{i+1}),  L(u_{i})+L(u_{i-1})\} = \min\{ d(u_{i},u_{i+1}),d(u_{i},u_{i-1})\} \leq d(G)/2$. 
Thus, \cref{thm:main2} (a*) is true.  Furthermore, by \cref{thm:lb} (a),  $L(u_0), L(u_{p-1}) \leq 1$. Hence $L(u_{i})+L(u_{i+1}) + L(u_{i+2}) \leq d(G)$ for all $0 \leq i \leq p-3$.      

Let $u_{i}$ and $u_{j}$ ($0 \leq i < j \leq p-1$) be vertices from the same branch of $G$. Denote $j-i-1=3q+r$ for some $q \geq 1$ and $0 \leq r \leq 2$. Then 
$$
P_{i,j} \geq (3q+r)(d(G)+1)/2 - qd(G) - rd(G)/2 \geq  (d(G)+3)q/2 \geq d(G)/2 \geq \phi(u_i, u_j). $$
It remains to show the cases when $j-i-1=1$ and $j-i-1=2$. Assume $j-i=2$. Then $L(u_i) + L(u_{i+1}) \leq d(G)/2$ or $L(u_{i+1}) + L(u_{i+2}) \leq d(G)/2$. Assume $L(u_i) + L(u_{i+1}) \leq d(G)/2$. (The case for $L(u_{i+1}) + L(u_{i+2}) \leq d(G)/2$ can be proved similarly.) Then $L(u_{i+1}) \leq d(G)/2 - L(u_i)$. Hence 
$$
P_{i,j} \geq (d(G)+1)/2 - d(G)/2 + L(u_i) \geq  1/2 + L(u_i) \geq \phi(u_i, u_j). $$
Finally, assume $j-i=3$.  If  $L(u_{i+1}) + L(u_{i+2}) \leq d(G)/2$,  
then one can see that $P_{i,j} \geq d(G)/2$  and the result follows. Thus, assume $L(u_{i+1}) + L(u_{i+2}) \geq d(G)/2$. By the assumption of (i), this implies $L(u_{i}) + L(u_{i+1}) \leq d(G)/2$ and $L(u_{i+2}) + L(u_{i+3}) \leq d(G)/2$. So, $L(u_{i+1}) \leq d(G)/2 - L(u_{i})$ and $L(u_{i+2}) \leq d(G)/2 - L(u_{i+3})$.  Hence 
$$
P_{i,j} \geq d(G)+1 - \left[d(G)/2 -  L(u_i)\right] - \left[d(G)/2 - L(u_j)\right] = 1  + L(u_i) + L(u_j) \geq \phi(u_i, u_j).   
$$
Therefore, \cref{thm:main2} (b*) holds. 

\textsf{Sub-Case 1.2.}  Suppose (ii) holds. Then by \cref{thm:lb} (a)  and (b),  $d(u_{i},u_{i+1}) = L(u_i)+L(u_{i+1}) \leq (d(G)+2)/2$ for each $0 \leq i \leq p-2$ and $L(u_{0})$ = 0.
Thus  $L(u_{i}) \geq 1$ for $1 \leq i \leq p-1$.  So we have $L(u_{i}) \leq (d(G)+1)/2$ for $0 \leq i \leq p-1$. Hence \cref{thm:main2} (a*) holds. 

Let $u_{i}$ and $u_{j}$ be two vertices of $G$ from  the same branch. 
If $j = i+2$, then 
$ 
P_{i,j} \geq (d(G)+1)/2 - L(u_{i+1}) \geq L(u_{i}) - 1/2$. Hence $P_{i,j} \geq \phi(u_{i},u_{j})$.

Assume $j \geq i+3$. Then 
$$
P_{i,j} \geq (d(G)+1)-L(u_{i+1})-L(u_{i+2}) \geq d(G)/2 \geq \phi(u_{i},u_{j}).
$$

\textsf{Sub-Case 1.3.}  
Suppose (iii) holds.  Assume $d(G)$ is even. Then for every $i$, $L(u_i) \leq d(G)/2$. By (iii), 
$P_{i,j} \geq (j-i-1)((d(G)+1)/2)-(j-i-1)(d(G)/2) = (j-i-1)/2 \geq (d(G)-1)/2 \geq \phi(u_i,u_j)$.

Assume $d(G)$ is odd.  
Because $L(u_{t}) \leq (d(G)+1)/2$ and  $\max\{L(u_{t})+L(u_{t+1}):0 \leq t \leq p-2\} \leq d(G)$, we obtain  
$\sum_{t=i+1}^{j-1}L(u_{t}) \leq ((j-i-1)/2)((d(G)+1)/2)+((j-i-1)/2)((d(G)-1)/2)$. Hence, by (iii), $P_{i,j} \geq (j-i-1)/2 \geq (d(G)-1)/2 \geq \phi(u_{i},u_{j})$. 

\noindent
\textsf{Case 2}:~$|W(G)| \geq 2$. Then   $\ve = 0$. Note that in this case, (ii) implies (i). Thus, we only need to consider (i) and (iii).  

\textsf{Sub-Case 2.1.} Suppose (i) holds. By  \cref{thm:lb} (a), $L(u_0)=L(u_{p-1})=0$. 
By (i) and \cref{thm:lb} (b), for each $1 \leq i \leq p-2$, $L(u_i) \leq  \min\{L(u_i)+L(u_{i+1}), L(u_i)+L(u_{i-1})\} =  \min\{d(u_i,u_{i+1})-1,d(u_i,u_{i-1})-1\} \leq (d(G)-1)/2 < d(G)/2$. Thus, \cref{thm:main2} (a*) is true. Moreover, note that $L(u_i)+L(u_{i+1})+L(u_{i+2}) \leq d(G)-1$.

Let $u_i$ and $u_j\;(0 \leq i < j \leq p-1)$ be vertices from the same branch of $G$. Denote $j-i-1 = 3q+r$ for some $q \geq 1$ and $0 \leq r \leq 2$. Then $$
P_{i,j} \geq (3q+r)(d(G)/2)-q(d(G)-1)-r((d(G)-1)/2)-1/2 \geq (qd(G)+1)/2 \geq d(G)/2 \geq \phi(u_i,u_j).
$$

It remain to show the cases when $j-i-1=1$ and $j-i-1=2$. Assume $j-i=2$. Then $L(u_i)+L(u_{i+1}) \leq (d(G)-1)/2$ or $L(u_{i+1})+L(u_{i+2}) \leq (d(G)-1)/2$. Assume $L(u_i)+L(u_{i+1}) \leq (d(G)-1)/2$ (The case $L(u_{i+1})+L(u_{i+2}) \leq (d(G)-1)/2$ can be proved similarly). Then $L(u_{i+1}) \leq (d(G)-1)/2-L(u_i)$. Hence,
$$
P_{i,j} \geq d(G)/2-(d(G)-1)/2+L(u_i)-1/2 = L(u_i) \geq \phi(u_i,u_j).
$$

Finally, assume $j-i=3$. If $L(u_{i+1})+L(u_{i+2}) \leq (d(G)-1)/2$, then one can see that $P_{i,j} \geq d(G)/2$ and the result follows: Thus, assume $L(u_{i+1})+L(u_{i+2}) \geq d(G)/2$. By the assumption of (i), this implies that $L(u_i)+L(u_{i+1}) \leq (d(G)-1)/2$ and $L(u_{i+2})+L(u_{i+3}) \leq (d(G)-1)/2$. So, $L(u_{i+1}) \leq (d(G)-1)/2-L(u_i)$ and $L(u_{i+2}) \leq (d(G)-1)/2-L(u_{i+3})$. Hence,
$$
P_{i,j} \geq d(G)-[(d(G)-1)/2-L(u_i)]-[(d(G)-1)/2-L(u_j)]-1/2 = 1/2+L(u_i)+L(u_j) \geq \phi(u_i,u_j).
$$

Therefor, \cref{thm:main2} (b*) holds.

\textsf{Sub-Case 2.2.} Suppose (iii) holds. Assume $d(G)$ is even. Then $L(u_t) \leq d(G)/2$ and  $\max\{L(u_t)+L(u_{t+1}) : 0 \leq t \leq p-2\} \leq d(G)-1$. These imply that 
$\sum_{t=i+1}^{j-1}L(u_t) \leq ((j-i-1)(d(G)-1))/2$. Hence by (iii), $P_{i,j} \geq (j-i-2)/2 \geq (d(G)-2)/2 \geq \phi(u_i,u_j)$.

Assume $d(G)$ is odd. Then $L(u_t) \leq (d(G)-1)/2$ for any $0 \leq t \leq p-1$. Again, by (iii), $P_{i,j} \geq (j-i-2)/2 \geq (d(G)-2)/2 \geq \phi(u_i,u_j)$.  The proof is complete. 
\end{proof}

%
\section{Lower bound block graphs}
%

Using the results in Section 3 we present two families of lower bound block graphs. Namely, the level-wise regular block graphs and the extended star of blocks.

\subsection{Level-wise regular block graphs}

We call a non-cut-vertex in a block graph an {\it end vertex}. Let $r, m$ be positive integers, $m \neq 2$, and let $(k_{i}, m_{i})$, $1 \leq i \leq r$, be pairs of positive integers with $m_i \geq 2$. If $m=1$, we additionally assume $k_1 \geq 2$. The {\it level-wise regular block graphs}, denoted by $G^m_{(k_1,m_1) (k_2,m_2) \ldots (k_r,m_{r})}$, is defined by the following: 

\begin{itemize} 
\item  {\bf Initially ($i=0, 1$):} For $i=0$, take a clique $K_{m}$ with vertex set $ \{w^0, w^1, \ldots, w^{m-1}\}$; for $i=1$, identify one vertex of each of $k_{1}$ copies of $K_{m_{1}+1}$ to each of $w^0, w^1, w^2, \ldots, w^{m-1}$.
\item {\bf Inductively:} At the $i$-th step, $2 \leq i \leq r$, identify one vertex of each of  $k_{i}$ copies of clique $K_{m_{i}+1}$ to each end vertex of the cliques joined in the $(i-1)$-th step.
\end{itemize}
See Figure 2 for examples. 
Denote 
$G^{m}_{(k_{1},m_{1}) \cdots (k_{r},m_{r})}$ by $G^{m}$, $m \neq 2$. It is easy to see that 
$W(G^m)=\{w^0, w^1, \ldots, w^{m-1}\}$. 
Moreover, $\diam(G^{1})$ = $2r$, and $\diam(G^{m})$ = $2r+1$ if $m \geq 3$.

\begin{figure}[ht]
\begin{center}
  \includegraphics[width=6.2in]{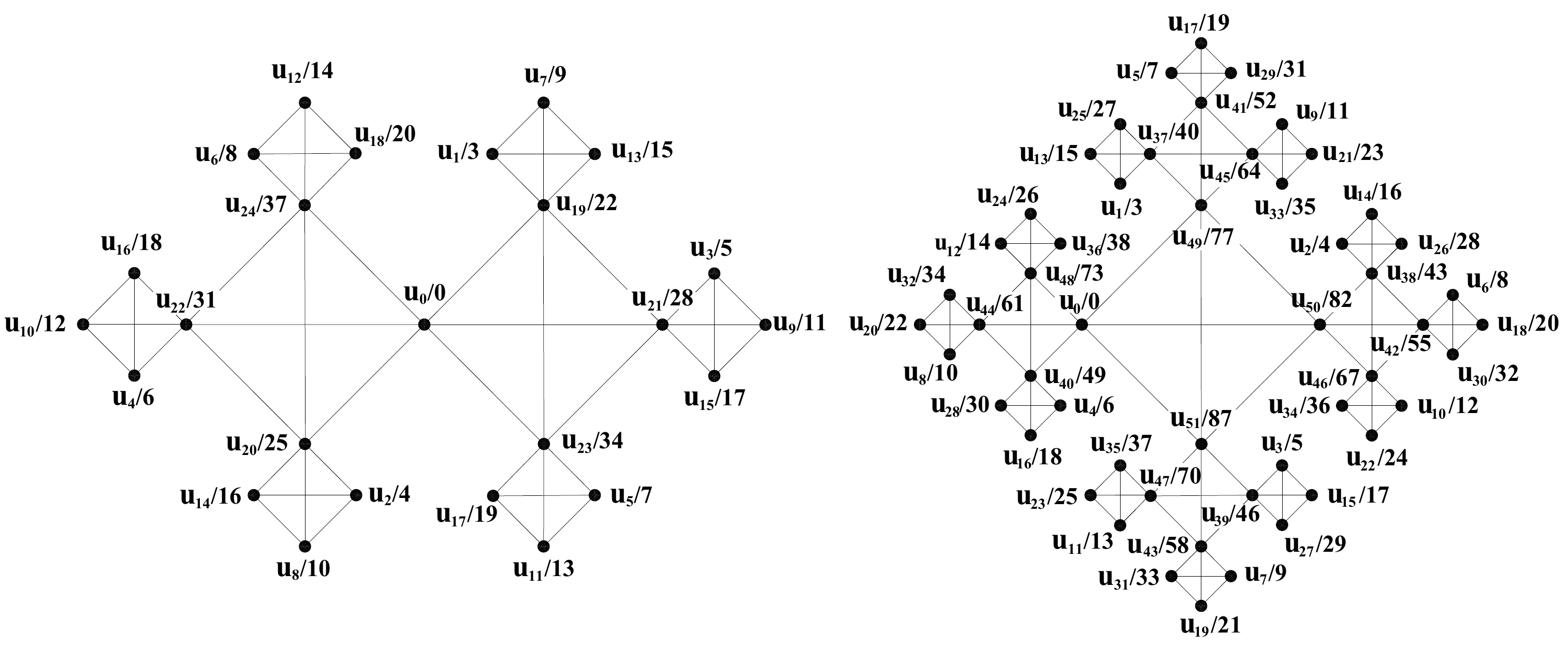}\\
  \caption{Optimal radio labelings for $G_{(2,3)(1,3)}^{1}$ and $G_{(1,3)(1,3)}^{4}$, respectively.}\label{Fig:Symblock}
\end{center}
\end{figure}

\begin{Theorem}\label{thm:level} 
Let $G = G^{m}_{(k_{1},m_{1}) \cdots (k_{r},m_{r})}$. Then $G$ is a lower bound block graph and  
\small{\begin{equation}\label{rn:level}
rn(G) = \left[  (m-1)+m\displaystyle\sum_{i = 1}^{r} \(\displaystyle\prod_{1 \leq j \leq i} k_{j}m_{j}\) 
\right] 
(2r+1)-2m\displaystyle\sum_{i = 1}^{r} i\(\displaystyle\prod_{1 \leq j \leq i} k_{j}m_{j}\) + \ve(G).
\end{equation}}
\end{Theorem}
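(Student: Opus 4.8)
The plan is to show that the right-hand side of \eqref{rn:level} is exactly the lower bound $LB(G)$ of \cref{thm:lb}, and then to prove that $G=G^m_{(k_1,m_1)\cdots(k_r,m_r)}$ attains it by producing a suitable ordering of $V(G)$.

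First I would record the parameters of $G$ rooted at its central vertices $w^0,\dots,w^{m-1}$. At each step every end vertex spawns $k_i$ copies of $K_{m_i+1}$, each contributing $m_i$ new vertices, so the number of vertices at level $\ell$ is $n_0=m$ and $n_\ell=m\prod_{j=1}^{\ell}k_jm_j$ for $1\le\ell\le r$. Summing gives
\[
p-1=(m-1)+m\sum_{\ell=1}^{r}\prod_{j=1}^{\ell}k_jm_j,
\qquad
L(G)=\sum_{\ell=1}^{r}\ell\,n_\ell=m\sum_{\ell=1}^{r}\ell\prod_{j=1}^{\ell}k_jm_j .
\]
Since $d(G)=2r,\ \ve(G)=1$ when $m=1$ and $d(G)=2r+1,\ \ve(G)=0$ when $m\ge 3$, in both cases $d(G)+\ve(G)=2r+1$. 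Substituting these three quantities into $LB(G)=(p-1)(d(G)+\ve(G))-2L(G)+\ve(G)$ reproduces the right-hand side of \eqref{rn:level} verbatim, so the theorem reduces to the single claim that $G$ is a lower bound block graph.

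Next I would construct an ordering $\vec u=(u_0,\dots,u_{p-1})$ meeting \cref{thm:lb}(a),(b). Using that all $mk_1$ top-level branches of $G$ are mutually isomorphic, the idea is a symmetric rotation: step from a branch off one central vertex to a branch off a \emph{different} central vertex when $m\ge 3$ (so consecutive vertices are opposite, $\delta=1$), or to a \emph{different} block off the single centre when $m=1$ (so consecutive vertices are in different branches), which gives \cref{thm:lb}(b); and fix the endpoints as two central vertices when $m\ge 3$, or as the centre $w$ and a level-$1$ vertex when $m=1$, so that $L(u_0)+L(u_{p-1})=\ve(G)$, i.e.\ \cref{thm:lb}(a). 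The rotation is chosen to spread the deep vertices of each fixed branch as evenly as possible among the returns to that branch.

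Finally I would verify \cref{thm:main2}. Condition (a*) is immediate because every level is at most $r\le(2r+1)/2=(d(G)+\ve(G))/2$. The heart is (b*): for $u_i,u_j$ in the same branch one must establish
\[
\sum_{t=i+1}^{j-1}\Big(\tfrac{2r+1}{2}-L(u_t)\Big)\ \ge\ \phi(u_i,u_j)+\tfrac{1-\ve(G)}{2}.
\]
Since each summand is at least $\tfrac12$, a crude separation $j-i\ge 2r+1-\ve(G)=d(G)$ would always suffice; but this is unattainable when $mk_1$ is small (there are then too few branches to push same-branch vertices $d(G)$ apart), so the simpler hypotheses of \cref{thm:suf} need not apply and the sharp characterization \cref{thm:main2} is genuinely required. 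The main obstacle is thus to show that whenever two same-branch vertices sit close together in $\vec u$, their common-ancestor level $\phi(u_i,u_j)$ is correspondingly small while the intervening vertices — forced by the rotation into other branches — supply enough slack $\sum(\tfrac{2r+1}{2}-L(u_t))$ to cover $\phi(u_i,u_j)$. Balancing the common-ancestor level against the intermediate levels is exactly where level-wise regularity is used, since the isomorphism among branches lets one calibrate the rotation so that a pair with common ancestor at level $\ell$ is separated by just enough steps of sufficiently small level.
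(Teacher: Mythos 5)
Your reduction step is correct and matches the paper: $p-1=(m-1)+m\sum_{\ell=1}^{r}\prod_{j\leq \ell}k_jm_j$, $L(G)=m\sum_{\ell=1}^{r}\ell\prod_{j\leq\ell}k_jm_j$, and $d(G)+\ve(G)=2r+1$ in both the $m=1$ and $m\geq 3$ cases, so the right-hand side of \eqref{rn:level} is exactly $LB(G)$ and everything hinges on showing $G$ attains it. Your observation that the sufficient conditions of \cref{thm:suf} need not apply here is also sound (for instance $G^{1}_{(2,3)(1,3)}$ has only two branches, so same-branch vertices recur every two positions in any alternating ordering, far less than $d(G)$ apart), which is why a finer verification via \cref{thm:main} or \cref{thm:main2} is indeed required.

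However, that finer verification --- the heart of the proof --- is never carried out. Your final paragraph states the obstacle (same-branch pairs that are close in the ordering must have small $\phi$, and the intervening vertices must supply enough slack in \eqref{eq:pij}) and then asserts that level-wise regularity ``lets one calibrate the rotation'' to overcome it; but that calibration is precisely the content of the theorem, and it cannot be checked against an ordering described only as spreading deep vertices ``as evenly as possible.'' The paper closes this gap with a fully explicit construction: every level-$(l+1)$ vertex receives a sub-index $w^{t}_{i_1,\ldots,i_l,i_{l+1}}$ in which any $k_{l+1}$ consecutive values of the last coordinate lie in \emph{different} blocks, and the position of each vertex in $\vec{u}$ is an explicit linear function of $(t,i_1,\ldots,i_l)$. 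From this one extracts the quantitative fact your sketch lacks: if $u_i$ and $u_j$ lie in the same branch, then $j-i=\alpha m$ (when $m\geq 3$) or $j-i=\alpha k_1$ (when $m=1$) for some $\alpha>\phi(u_i,u_j)$, whence $d(u_i,u_j)\geq L(u_i)+L(u_j)+1-2\alpha$; combining this with the upper bound $S_{i,j}\leq L(u_i)+L(u_j)+1-\ve(G)-(j-i-1)\left[d(G)+\ve(G)-2L(u_i)\right]$ on the right-hand side of \eqref{eq:dij} yields the required inequality. Until you specify your rotation precisely enough to prove a statement of this type --- a lower bound on $j-i$ in terms of $\phi(u_i,u_j)$ for same-branch pairs --- your argument is a plan rather than a proof.
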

\begin{proof} The order and the total level of $G$ are 

$$
|V(G)| = m \left[ 1+ \displaystyle\sum_{i = 1}^{r} \(\displaystyle\prod_{1 \leq j \leq i} k_{j}m_{j}\)\right], \ \ 
L(G) = m\displaystyle\sum_{i = 1}^{r} i\(\displaystyle\prod_{1 \leq j \leq i} k_{j}m_{j}\).  
$$

Substituting the above into \eqref{eq:lb} we obtain the right-hand-side of \eqref{rn:level} as a lower bound for $rn(G)$. We now prove that this lower bound is tight by giving an ordering $\vec{u} = (u_{0},u_{1},...,u_{p-1})$ of $V(G)$ which satisfies \cref{thm:main}. 

Denote  $W(G)=\{w^{0},w^{1},...,w^{m-1}\}$. In the following we denote each vertex $v$ with $L(v)=x$, $1 \leq x \leq r$, with an $x$-bit sub-index. 

\noindent
\underline{Initially}: Assign a 1-bit sub-index to all level-1 vertices. For each $0 \leq t \leq m-1$, denote the $k_{1}m_{1}$ children of $w^{t}$ by  $w^{t}_{0},w^{t}_{1},...,w^{t}_{k_{1}m_{1}-1}$ such that any $k_{1}$ consecutive vertices (i.e., vertices with consecutive sub-indices) are in different blocks while $w^{t}_{a}$ and $w^{t}_{a+k_{1}}$ are in the same block for $0 \leq a \leq k_{1}m_{1}-k_{1}-1$. 

\noindent
\underline{Inductively}: Suppose all vertices up to level-$l$ are assigned with a sub-index. Denote the $k_{l+1}m_{l+1}$ children of  $w^{t}_{i_{1},i_{2},...,i_{l}}$, $0 \leq t \leq m-1$, $0 \leq i_{j} \leq k_{j}m_{j}-1$, $0 \leq j \leq l$,  
by  $w^{t}_{i_{1},i_{2},...,i_{l},i_{l+1}}$, where $0 \leq i_{l+1} \leq k_{l+1}m_{l+1}-1$ such that any $k_{l+1}$ consecutive vertices are in different blocks while  $w^{t}_{i_{1},i_{2},...,i_{l},a}$ and  $w^{t}_{i_{1},i_{2},...,i_{l},a+k_{l+1}}$ are in the same block for $0 \leq a \leq k_{l+1}m_{l+1}-k_{l+1}-1$. Continue this process until all vertices are indexed. 

Next, we define an ordering  $\vec{u} =  (u_0,u_1,\ldots,u_{p-1})$ as follows: Let $u_0 = w^{m-1}$, and for $1 \leq j \leq p-m$, let 
$ 
u_{j}:=w^{t}_{i_{1},i_{2},...,i_{l}}$, where 
$$
j = m\left(i_{1}+i_{2}(k_{1}m_{1})+...+i_{l}\(\displaystyle\prod_{i=1}^{l-1}k_{i}m_{i}\)+\displaystyle\sum_{l+1 \leq t \leq  r}\(\displaystyle\prod_{i=1}^{t}k_{i}m_{i}\)\right)+t+1.
$$
For $p-m+1 \leq j \leq p-1$, let 
$ 
u_j:=w^{j-p+m-1}. 
$

Note that when $|W(G)|=1$, we have $u_0 \in W(G)$ and $u_{p-1}$ is adjacent to $u_0$; and  when $|W(G)| \geq 2$, we have $\{u_0, u_{p-1}\} \subseteq W(G)$. Hence \cref{thm:lb} (a) holds.  Moreover, for all $i$, $u_i$ and $u_{i+1}$ are in different branches when $|W(G)|=1$, and are in opposite branches when $|W(G)| \geq 2$. Hence \cref{thm:lb} (b) holds. To complete the proof it remains to show that the ordering $\vec{u}$ defined above satisfies \eqref{eq:dij} in \cref{thm:main}.

For $0 \leq x \leq r$, denote $L_{x}$ the set of level-$x$ vertices.  Then $|L_{x}|$ = $m\prod_{i=1}^{x} k_{i}m_{i}$. Consider any two vertices $u_{i}$, $u_{j}$ with $0 \leq i < j \leq p-1$ such that $u_{i} \in L_{a}$ and $u_{j} \in L_{b}$. Observe that $a \geq b$ as $i < j$. The right-hand-side of $(\ref{eq:dij})$ is
$$
\begin{array}{llll}
S_{i,j}&:=& \sum\limits_{t=i}^{j-1} \left[   L(u_{t})+L(u_{t+1})\right] -  (j-i)\left[d(G)+\ve(G)\right] +1 + d(G) \\
&\leq& 2a(j-i-1)+a+b-(j-i-1)\left[d(G)+\ve(G)\right]+1-\ve(G) \\
&=& a+b+1-\ve(G)-(j-i-1)\left[d(G)+\ve(G)-2a\right].
\end{array}
$$
Note that $1 - \ve(G) \geq 0$, $j-i-1 \geq 0$,  and $d(G) +\ve(G) - 2a \geq 1$.  If $u_{i}$ and $u_{j}$ are in opposite branches, then $S_{i,j} \leq a+b+1$ = $d(u_{i},u_{j})$, as $\ve(G)=0$. If $u_{i}$ and $u_{j}$ are in different  branches, then $S_{i,j} \leq a+b$ = $d(u_{i},u_{j})$. 

Assume $u_{i}$ and $u_{j}$ are in the same branch. By definition of the ordering, $j-i = \alpha m$ (if $m \geq 3$) or $j-i = \alpha k_1$ (if $m=1$) for some   
$\alpha > \phi(u_i, u_j)$. 
Hence $d(u_i, u_j) \geq a+b+1-2\alpha$.  
If $|W(G)|=1$ then $S_{i,j} \leq  a+b-(k_1\alpha-1)(d(G)+1-2a) \leq a+b+1-2\alpha \leq d(u_i,u_j)$. If $|W(G)| = m \geq 3$, then $S_{i,j} \leq a+b+1-(m\alpha-1)(d(G)-2a) \leq a+b+1-2\alpha \leq d(u_i,u_j)$. This completes the proof.
\end{proof}

The readers are referred to Figure \ref{Fig:Symblock} about the ordering used in the proof of Theorem \ref{thm:level}.

\subsection{Extended star of blocks}

Let $h, n$ be positive integers, and $P_{h+1}$ be the path with $h$ edges. The {\it path of cliques}, denoted by $P_{h, n}$, is obtained by the following process: 

\begin{itemize}
    \item For each edge $e=uv$ on $P_{h+1}$, add  $n-2$ vertices,  $w_{e,1}, w_{e,2}, \ldots, w_{e,n-2}$, and edges so that $u,v, w_{e,1}, w_{e,2}, \ldots, w_{e,n-2}$ form a clique. A vertex $v$ in $P_{n,k}$ is called a {\it tail vertex} if its eccentricity (the maximum distance from a vertex to $v$) is equal to the diameter of $P_{h,n}$. 
\end{itemize}
The extended star of blocks is defined by the following:  Start with a clique $K_m$ with vertex set $\{w^1, w^2, \ldots, w^m\}$. Let $k$ be a positive integer. In the case $m=1$, we additionally assume $k \geq 3$.  The {\it extended star of blocks}, denoted by  $S^{m}_{k, h, n}$, is established by identifying one tail vertex of each of the $k$ copies of $P_{h,n}$ to each  $w^i$, $1 \leq i \leq m$. 
See Figure 3 for examples. 

It is clear that $W(S^{m}_{k,h,n})= \{w^1, w^2, \ldots, w^m\}$ and  $S^{m}_{k,h,n}$ has $mk$ branches.   
Moreover, $\diam(S^{m}_{k,h,n}) = 2h$ if $m = 1$, and $\diam(S^{m}_{k,h,n}) = 2h+1$ if $m \geq 2$.

\begin{figure}[ht]
\begin{center}
  \includegraphics[width=6in]{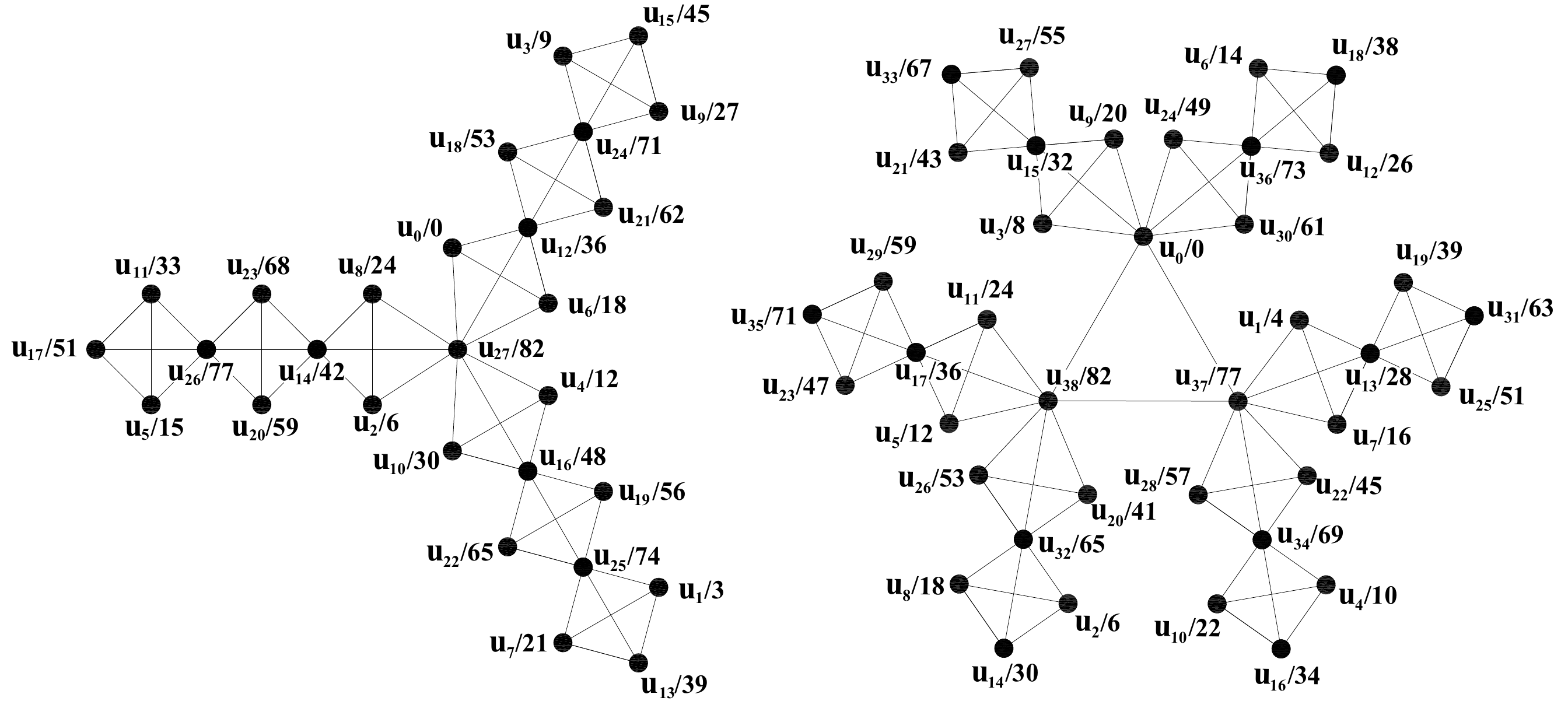}\\
  \caption{Optimal radio labelings of $S^{1}_{3,3,4}$ and $S^{3}_{2,2,4}$, respectively.}\label{Fig:Starblock}
\end{center}
\end{figure}

\begin{Theorem}\label{star:thm} Let $G = S^{m}_{k,h,n}$. Then $G$ is a lower bound block graph and 
\begin{equation}\label{star:rn}
rn(G) = mkh^2(n-1)+(m-1)(2h+1)+\ve(G).
\end{equation}
\end{Theorem}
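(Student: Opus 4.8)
The plan is to prove $rn(G)\ge mkh^{2}(n-1)+(m-1)(2h+1)+\ve(G)$ from \cref{thm:lb} and the reverse inequality by constructing an ordering that realizes the equality conditions of \cref{thm:main}. First I would read off the two quantities that $G$ contributes to \eqref{eq:lb}. Each of the $mk$ branches is a path of $h$ cliques in which level $\ell$ ($1\le\ell\le h$) consists of exactly $n-1$ vertices, all lying in one clique $K_{n}$ and hence pairwise at distance $1$; this yields $|V(G)|=m(1+kh(n-1))$ and $L(G)=mk(n-1)\,h(h+1)/2$. Since $d(G)+\ve(G)=2h+1$ in both the $m=1$ and the $m\ge2$ cases, substituting these values into \eqref{eq:lb} and using $(2h+1)-(h+1)=h$ collapses the bound to precisely the right-hand side of \eqref{star:rn}. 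This disposes of the lower bound.

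For the upper bound I would exhibit an explicit ordering $\vec{u}=(u_{0},\ldots,u_{p-1})$ and verify (a),(b) of \cref{thm:lb} together with inequality \eqref{eq:dij} of \cref{thm:main}; by \cref{thm:main} this forces $rn(G)=LB(G)$. The ordering rotates through the $mk$ branches so that the branch index advances at every step, and it places the $m$ weight centers so that $u_{0},u_{p-1}\in W(G)$ (giving $L(u_{0})+L(u_{p-1})=\ve(G)$, i.e.\ (a)) while each interior weight center is flanked by vertices of branches attached to a \emph{different} center; this guarantees that consecutive vertices lie in different branches when $|W(G)|=1$ and in opposite branches when $|W(G)|\ge2$, i.e.\ (b). For a pair $u_{i},u_{j}$ in different or opposite branches one has $d(u_{i},u_{j})=L(u_{i})+L(u_{j})+1-\ve(G)$, and since every level is at most $h\le(d(G)+\ve(G))/2$, inequality \eqref{eq:dij} is immediate; so the real content lies entirely in the same-branch pairs, for which I would substitute \cref{d:uv} and control $\phi(u_{i},u_{j})$ through the rotation, exactly as in the proof of \cref{thm:level}.

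The main obstacle is the same-branch case. After \cref{d:uv}, inequality \eqref{eq:dij} for such a pair reduces to requiring the index separation $j-i$ to be at least $2\phi(u_{i},u_{j})+\rho(u_{i},u_{j})+2$. Since a branch does not sub-divide, the deepest common ancestors — hence the largest values of $\phi$, up to $h-1$ by \cref{l:phi} — occur among the high-level vertices of a branch; in particular the $n-1$ vertices of a single level-$\ell$ clique sit at mutual distance $1$ but share an ancestor of level $\ell-1$, so any two of them must be placed at least $2\ell+1$ positions apart, reaching $d(G)$ for the deepest clique. The construction must therefore combine the branch rotation with a within-branch staggering of levels so that every such high-$\phi$ pair is spread sufficiently far in $\vec{u}$. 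For $mk\ge3$ there is ample room and the verification is uniform; the borderline case is $mk=2$, which forces $m=2,\,k=1$ (as $|W(G)|=1$ requires $k\ge3$). Here $G$ is path-like — an even path of Liu and Zhu when $n=2$ — the crude count falls short by exactly one, and I would treat this family by a tailored staggering that uses the exact interior levels together with the slack coming from $\rho$. I expect this boundary case, not the generic $mk\ge3$ case, to absorb most of the effort.
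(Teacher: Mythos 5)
Your lower-bound computation and your plan for the generic case are exactly the paper's: it substitutes $|V(G)|=m[kh(n-1)+1]$ and $L(G)=mkh(h+1)(n-1)/2$ into \eqref{eq:lb}, then builds a branch-rotating, level-staggered ordering (written out via explicit index formulas, split by the parity of $km$ and of $h$) and verifies \eqref{eq:dij} of \cref{thm:main}, which is precisely your rotation-plus-staggering scheme; your observation that two vertices of a common level-$\ell$ clique force an index separation of at least $2\ell+1$ is the real content of that verification.

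The gap is the borderline case $mk=2$ (so $m=2$, $k=1$) that you defer to a ``tailored staggering'': no such construction exists when $n\geq 3$, because the statement itself fails there. Conditions (a) and (b) of \cref{thm:lb} force $u_0,u_{p-1}$ to be the two weight centers and force the remaining vertices to alternate between the only two branches. Let $z=u_q$ be a level-$h$ vertex at an interior position, so that $u_{q-1}$ and $u_{q+1}$ are both branch vertices, necessarily lying in the one branch opposite to $z$. For this pair, \eqref{eq:dij} with $j=i+2$ combined with \cref{d:uv} (here $d(G)=2h+1$, $\ve(G)=0$, $\delta=0$) reduces to $2\phi(u_{q-1},u_{q+1})+\rho(u_{q-1},u_{q+1})\leq 2h-2L(z)=0$. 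But any two distinct vertices $x,y$ of a common branch satisfy $d(x,y)\leq L(x)+L(y)-1$ (route through the branch's level-$1$ cut vertex; the geodesic never leaves the branch), which by \cref{d:uv} says $2\phi(x,y)+\rho(x,y)\geq 1$. Hence every one of the $n-1\geq 2$ level-$h$ vertices of a branch would have to occupy that branch's unique end position (position $1$, resp.\ $p-2$), which is impossible. So $rn(S^{2}_{1,h,n})>LB(S^{2}_{1,h,n})$ for all $h\geq 1$, $n\geq 3$, and \eqref{star:rn} is false on this subfamily; only $n=2$ (the even path) survives. The correct move is to add the hypothesis $mk\geq 3$ (or restrict $mk=2$ to $n=2$), not to search for a cleverer ordering. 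You can take some comfort in the fact that the paper's own proof trips at exactly this spot: its argument for $j=i+2$ asserts that $u_i$ and $u_j$ are ``in opposite or different branches,'' which presupposes at least three branches. Your diagnosis of where the difficulty lives is sharper than the paper's treatment, but the difficulty is fatal rather than technical, so the tailored construction you promise cannot be supplied.
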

\begin{proof} It is straightforward to verify that the order and the total level of graph $G$ are 
$$
|V(G)| = m [kh(n-1) + 1], \ \ 
L(G) = \frac{mkh(h+1)(n-1)}{2}. 
$$
Substituting the above into (\ref{eq:lb}) we obtain the right-hand-side of (\ref{star:rn}) as a lower bound for $rn(G)$. Now we prove that this lower bound is tight by giving an optimal ordering  $\vec{u}=(u_0,u_1,...,u_{p-1})$ of $V(G)$ which  will induce a labeling with the desired span. 

Denote $W(G)=\{w^1, w^2, \ldots, w^m\}$.  For each $w^i$, denote the $k$ branches adjacent  to it by $B_{i+mt}$ for $0  \leq t \leq k-1$. For $1 \leq i \leq h, 1 \leq j \leq n-1, 1 \leq l \leq mk$, denote  $w_{i,j}^{l}$, the $j$-th vertex of level-$i$ in $B_l$ where $w_{i, n-1}^{l}$ are the cut-vertices of  $G$, $1 \leq i \leq h-1, 1 \leq l \leq mk$.

We now rename $w^l_{i,j}, 1 \leq i \leq h, 1 \leq j \leq n-1, 1 \leq l \leq mk$, as \{$w_1,\ldots,w_{p-m}$\}: 

\textsf{Case A:} $km$ is odd. 
If $h$ is odd, let $w_{t} := w_{i,j}^{l}$, where \begin{eqnarray*}
t = &\left\{
\begin{array}{ll}
2mk(i-1)(n-1)+2mk(j-1)+l, & \mbox{ if } i < \lceil\frac{h}{2}\rceil, \ \mbox{$l$ is odd}  \\ [0.3cm]
2mk(i-1)(n-1)+mk(j-1)+l, & \mbox{ if } i = \lceil\frac{h}{2}\rceil, \ \mbox{$l$ is odd}  \\ [0.3cm]
2mk(h-i)(n-1)+2mk(j-1)+l+mk, & \mbox{ if } i > \lceil\frac{h}{2} \rceil, \ \mbox{$l$ is odd} \\ [0.3cm]
2mk(i-1)(n-1)+2mk(j-1)+l+mk, & \mbox{ if } i < \lceil\frac{h}{2}\rceil, \ \mbox{$l$ is even}  \\ [0.3cm]
2mk(i-1)(n-1)+mk(j-1)+l, & \mbox{ if } i = \lceil\frac{h}{2}\rceil, \ \mbox{$l$ is even}  \\ [0.3cm]
2mk(h-i)(n-1)+2mk(j-1)+l, & \mbox{ if } i > \lceil\frac{h}{2}\rceil, \ \mbox{$l$ is even}.
\end{array}
\right.
\end{eqnarray*}

If $h$ is even, let 
$w_{t} := w_{i,j}^{l}$, where  
\begin{eqnarray*}
t = &\left\{
\begin{array}{ll}
2mk(i-1)(n-1)+2mk(j-1)+l, & \mbox{ if } i \leq \frac{h}{2}, \ \mbox{$l$ is odd} \\ [0.3cm]
2mk(h-i)(n-1)+2mk(j-1)+l+mk, & \mbox{ if } i > \frac{h}{2}, \ \mbox{$l$ is odd} \\ [0.3cm]
2mk(i-1)(n-1)+2mk(j-1)+l+mk, & \mbox{ if } i \leq \frac{h}{2}, \ \mbox{$l$ is even}  \\ [0.3cm]
2mk(h-i)(n-1)+2mk(j-1)+l, & \mbox{ if } i > \frac{h}{2}, \ \mbox{$l$ is even}. 
\end{array}
\right.
\end{eqnarray*}

\textsf{Case B.}~$km$ is even. Let $w_{t} := w_{i,j}^{l}$, where 
\begin{eqnarray*}
t = &\left\{
\begin{array}{ll}
mk(i-1)(n-1)+mk(j-1)+l, & \mbox{ if } l \mbox{ is even}, \\ [0.3cm]
mk(h-i)(n-1)+mk(j-1)+l, & \mbox{ if } l \mbox{ is odd}.
\end{array}
\right.
\end{eqnarray*}

Next define an ordering $\vec{u} =  (u_0,u_1,\ldots,u_{p-1})$ as follows. 
If $|W(G)| = 1$, let $u_{p-1} = w^1$ and $u_i := w_{i+1}$ for $0 \leq i \leq p-2$. If $|W(G)| = m \geq 2$, let $u_0 = w^m$ and $u_i:=w_i$ for $1 \leq i \leq p-m$. For $p-m+1 \leq i \leq p-1$, let $u_i := w^{i-p+m}$.

It is easy to see that $\vec{u}$ defined above satisfies \cref{thm:lb} (a) and (b). 
To complete the proof it suffices to show $\vec{u}$ satisfies \eqref{eq:dij}  in \cref{thm:main}. If $d(G)$ = 2 or 3 then it is easy to verify \eqref{eq:dij}. Hence, assume $d(G) \geq 4$. Let $u_i$ and $u_j$ be two vertices with $0 \leq i < j \leq p-1$. Note that by our  definition of $\vec{u}$, for any $0 \leq i \leq p-2$, it holds that  $L(u_i) + L(u_{i+1}) \leq (d(G)+3+\ve(G))/2$,   and  the equality holds for at most one pair in every $mk$  consecutive terms. Denote the right-hand-side of \eqref{eq:dij} by $S_{i,j}$, then for $j \geq i+3$ we have: 
$$
\begin{array}{lll} 
S_{i,j} &:=& \sum\limits_{t=i}^{j-1}\left[L(u_t)+L(u_{t+1})\right]-(j-i)(d(G)+\ve(G))+1+d(G) \\
&\leq& (j-i-2)\(\frac{d(G)+3+\ve(G)}{2}\)+2\(\frac{d(G)+1+\ve(G)}{2}\)-2(d(G)+\ve(G))+1-\ve(G) \\
&=& (j-i-2)\left[3-d(G)-\ve(G)\right]/2+2-\ve(G) \\
&< & 2-\ve(G) \leq d(u_i,u_j). \end{array}
$$
If $j=i+2$, then $S_{i,j} \leq 3 -  \ve(G) \leq d(u_i,u_j)$, as $u_i$ and $u_j$ are in opposite or different branches. Thus \eqref{eq:dij} is satisfied. If $j=i+1$ it is straightforward to verify that \eqref{eq:dij} is satisfied. Therefore, by Theorem \ref{thm:main}, $G$ is a lower bound block graph. 
\end{proof}

For better understanding of the above results the readers are referred to Figure \ref{Fig:Starblock}.

%
\section{Radio numbers of the line graphs of trees}
%

Let $G$ be a graph. The {\it line graph} of $G$, denoted by $\mathcal{L}(G)$, is the graph whose vertices are the edges of $G$, where $ef \in E(\mathcal{L}(G))$ when $e \cap f \neq \emptyset$. Observe that every tree and its line graph are both block graphs. It is known and easy to see that a graph $G$ is the line graph of a tree if and only if $G$ is a connected block graph in which each cut-vertex is on exactly two blocks (cf.  \cite{Harary}). 

It is known that a connected graph $G$ is isomorphic to its line graph if and only if $G$ is a cycle. Hence there is a one-one correspondence between trees and their line graphs. Note that the line graph of an $n$-vertex path $P_n$ is $P_{n-1}$. It is known \cite{Daphne1} that $P_{m}$ is a lower bound tree if and only if $m$ is even.  Hence,  there exist lower bound trees whose line graphs are not lower bound block graphs, and  there exist non-lower bound trees whose line graphs are lower bound block graphs. 

In this section we investigate the relations between the radio numbers of a tree and its line graph. In particular, we establish conditions for lower bound trees so that their line graphs are lower bound block graphs. Moreover, we set up conditions so that the trees are lower bound trees given their line graphs are lower bound block graphs.  

We use the following presentation for the line graph of a tree. Let $T$ be a tree rooted at a weight center $w^*$. If there exists an edge $e$ = $uv$ with $d(w^*,v)$ = $d(w^*,u)+1$, then $u$ is called the \emph{edge ancestor} of $v$, and $v$ is an \emph{edge descendent} of $u$. We denote a vertex in $\mathcal{L}(T)$ by the edge descendent of the corresponding edge in $T$.  
That is, if $uv \in E(T)$ and $v$ is an edge descendent vertex of $u$  then we denote the edge $uv$ by the vertex $v$ in $\mathcal{L}(T)$. 

The following result is useful in our proofs. 
\begin{Theorem}
{\rm \cite{Daphne1}}
\label{weight center}
Every tree $T$ has one or two weight centers. If $w$ is a weight center of $T$, then each component of $T - w$ contains at most $|V(T)|/2$ vertices. Moreover, $T$ has two weight centers, $W(T)=\{w, w'\}$, if and only if $ww'$ is an edge of $T$ and $T-ww'$ consists of two equal-sized components. 
\end{Theorem}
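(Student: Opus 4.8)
The plan is to reduce everything to a single weight identity for adjacent vertices and then invoke \cref{lem:wg}, which applies since a tree is a block graph whose blocks are single edges. First I would record the basic identity: if $aw \in E(T)$, then deleting the edge $aw$ partitions $V(T)$ into $A(a,w)$ and $A(w,a)$, so $n(a,w)+n(w,a)=p$ where $p=|V(T)|$; moreover $A(a,w)$ is precisely the component of $T-w$ containing $a$. Feeding $n(w,a)=p-n(a,w)$ into \cref{temp}(2) gives the key formula
\[
wt(a)-wt(w)=n(w,a)-n(a,w)=p-2\,n(a,w).
\]
Thus moving from $w$ to an adjacent $a$ strictly decreases the weight exactly when the component of $T-w$ containing $a$ has more than $p/2$ vertices.

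The central step is the characterization: a vertex $w$ is a weight center if and only if every component of $T-w$ has at most $p/2$ vertices. The forward direction is immediate from the displayed identity, since $wt(a)\geq wt(w)$ for every neighbour $a$ forces $n(a,w)\leq p/2$, and the components of $T-w$ are exactly the sets $A(a,w)$ as $a$ ranges over the neighbours of $w$. For the converse I would take an arbitrary vertex $v$ and walk along the unique $(w,v)$-path $w=z_0,z_1,\ldots,z_k=v$. At each step the component of $T-z_i$ containing $z_{i+1}$ is the sub-branch hanging below $z_{i+1}$, and these branches are nested and decreasing as $i$ grows, so their sizes are all bounded by the size of the first one, $n(z_1,z_0)\leq p/2$. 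Hence $wt(z_{i+1})-wt(z_i)=p-2\,n(z_{i+1},z_i)\geq 0$ for every step, and summing telescopically yields $wt(v)\geq wt(w)$; as $v$ was arbitrary, $w$ is a weight center. This nestedness-of-forward-branches argument is the one genuinely non-bookkeeping point, and it is where I expect to spend the most care.

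With the characterization in hand the remaining claims follow quickly. Existence of at least one weight center is clear because $wt$ takes finitely many values, and any minimizer satisfies the component bound by the forward direction above, which is exactly the second assertion of the theorem. Since $T$ is a block graph whose blocks are single edges, \cref{lem:wg} forces $W(T)$ to lie inside one edge, so $|W(T)|\leq 2$ and, when $|W(T)|=2$, the two centers are adjacent. Finally, for the two-center characterization: if $W(T)=\{w,w'\}$ then $ww'\in E(T)$ by the previous sentence and $wt(w)=wt(w')$, whence the displayed identity gives $n(w,w')=n(w',w)=p/2$, i.e. $T-ww'$ splits into two equal halves. Conversely, if $ww'\in E(T)$ and $T-ww'$ has two components of size $p/2$ each, then the component of $T-w$ containing $w'$ has exactly $p/2$ vertices while the remaining components of $T-w$ together hold only $p/2-1$ vertices; thus all components of $T-w$ are bounded by $p/2$, so $w$ (and by symmetry $w'$) is a weight center, giving $W(T)=\{w,w'\}$.
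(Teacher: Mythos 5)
Your proof is correct, but there is nothing in the paper to compare it against: \cref{weight center} is quoted from \cite{Daphne1} as a known result and the paper supplies no proof of it. What you have written is a valid, self-contained derivation, organized around the identity $wt(a)-wt(w)=p-2n(a,w)$ for adjacent $a,w$ --- which is legitimate here because in a tree $V(T)$ is partitioned into $A(a,w)$ and $A(w,a)$ (no vertex is equidistant from two adjacent vertices), a fact special to trees and not true in general block graphs. Two remarks. First, the step you flag as the delicate one, the nestedness of the forward branches along the $(w,v)$-path, can be dispatched by quoting \cref{temp}(1) directly: for consecutive vertices $z_{i-1},z_i,z_{i+1}$ on a geodesic in a tree we have $z_{i-1}z_{i+1}\notin E(T)$, so $n(z_{i+1},z_i)<n(z_i,z_{i-1})$, and iterating gives $n(z_{i+1},z_i)<n(z_1,z_0)\leq p/2$; this is exactly the monotonicity you argue by hand. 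Second, your use of \cref{lem:wg} to conclude $|W(T)|\leq 2$ and the adjacency of two weight centers (a block of a tree is a single edge) is an economical shortcut not available to the original source, which proves adjacency separately; it also correctly supplies the final upgrade of $\{w,w'\}\subseteq W(T)$ to $W(T)=\{w,w'\}$ in your two-center characterization. The remaining bookkeeping (existence of a minimizer, the equal-halves equivalence via $n(w',w)=p/2$, and the count $p/2-1$ for the remaining components of $T-w$) is all handled correctly.
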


Combining \cref{weight center} and definition of line graphs, we obtain 
\begin{Corollary} 
\label{two weight center} 
For a tree $T$ with $|W(T)|=2$, we have  $|W(\mt{L}(T))| = 1$. 
\end{Corollary}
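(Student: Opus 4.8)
The plan is to use Theorem~\ref{weight center} to translate the two-weight-center structure of $T$ into the line-graph $\mt{L}(T)$. Suppose $W(T) = \{w, w'\}$. By Theorem~\ref{weight center}, $ww'$ is an edge of $T$, and $T - ww'$ splits into two components of equal size, say $T_w$ (containing $w$) and $T_{w'}$ (containing $w'$), each with $|V(T)|/2$ vertices. The key observation is that in the line graph, the edge $ww'$ becomes a single vertex $e^* \in V(\mt{L}(T))$, and I claim $e^*$ is the unique weight center of $\mt{L}(T)$.

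First I would set up the correspondence between vertices of $\mt{L}(T)$ and edges of $T$, and compute distances in $\mt{L}(T)$ in terms of distances in $T$: for two edges $e, f$ of $T$, the distance $d_{\mt{L}(T)}(e, f)$ equals the number of edges strictly between them along the unique path in $T$ joining them, i.e.\ roughly the edge-path distance. Then I would compute $wt_{\mt{L}(T)}(e^*) = \sum_{f \in E(T)} d_{\mt{L}(T)}(e^*, f)$ and compare it with $wt_{\mt{L}(T)}(g)$ for a neighbouring vertex $g$ of $e^*$ in $\mt{L}(T)$. A vertex adjacent to $e^*$ corresponds to an edge $g$ of $T$ sharing an endpoint with $ww'$; moving from $e^*$ to such a $g$ shifts the ``split'' of $E(T)$ so that the two sides become unbalanced. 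I would invoke Proposition~\ref{temp}(2): for adjacent vertices $e^*$ and $g$ in $\mt{L}(T)$, $wt(e^*) = wt(g) + n(g, e^*) - n(e^*, g)$, and argue that the equal-sized splitting of $T - ww'$ forces $n(e^*, g) > n(g, e^*)$ strictly, so $wt(e^*) < wt(g)$. This strict inequality, propagated by Proposition~\ref{temp}(1) along any geodesic away from $e^*$ (which shows $n$ is strictly increasing as one moves away), gives that $e^*$ is the strict minimizer, hence $W(\mt{L}(T)) = \{e^*\}$ and $|W(\mt{L}(T))| = 1$.

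The main obstacle I anticipate is verifying the strict inequality $n(e^*, g) > n(g, e^*)$ cleanly, i.e.\ showing that the set of edges of $T$ closer (in $\mt{L}(T)$) to $e^*$ than to $g$ strictly outnumbers those closer to $g$. This is where the hypothesis $|W(T)| = 2$ does the real work: the balanced splitting $|V(T_w)| = |V(T_{w'})|$ means the edge sets are as symmetric as possible about $e^*$, so any adjacent edge $g$ (which necessarily lies on one side) breaks the tie in favour of $e^*$. I would make this precise by counting edges on each side of the relevant cut and using that an edge incident to $w$ or $w'$ but distinct from $ww'$ pulls at most half of one side toward $g$ while all of the opposite side plus $e^*$ itself stays with $e^*$. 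Once this counting is pinned down, Corollary~\ref{two weight center} follows immediately.
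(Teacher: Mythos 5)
Your proposal is correct in its approach, but note that the paper itself gives essentially no proof of this corollary: it is dispatched in one line as ``combining \cref{weight center} and the definition of line graphs,'' with no candidate weight center named and nothing computed. What you propose is a genuine, self-contained argument that fills this in: you identify $e^*=ww'$ as the unique weight center of $\mathcal{L}(T)$ and prove strict minimality of $wt(e^*)$ via \cref{temp}(1)--(2), which is precisely the machinery the paper uses to prove \cref{lem:wg}. That route is sound: for any neighbour $g$ of $e^*$ one gets $wt(g)-wt(e^*)=n(e^*,g)-n(g,e^*)>0$, and along any geodesic $e^*=v_0,v_1,\dots,v_t$ the increments $wt(v_{i+1})-wt(v_i)=n(v_i,v_{i+1})-n(v_{i+1},v_i)$ remain strictly positive because \cref{temp}(1) makes $n(v_i,v_{i+1})$ increasing and $n(v_{i+1},v_i)$ decreasing in $i$, so every vertex other than $e^*$ has strictly larger weight.

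Two imprecisions in your sketch should be repaired in a write-up, though neither breaks the argument. First, the line-graph distance is the minimum $T$-distance between an endpoint of $e$ and an endpoint of $f$ \emph{plus one} (your ``number of edges strictly between them'' is off by one); since this is a uniform shift it does not affect which of two vertices a third one is closer to. Second, your claim that a neighbour $g$ of $e^*$ ``pulls at most half of one side'' is too strong and also not what you need: if $g=wx$, then $g$ can pull \emph{all} edges of the component of $T-w$ containing $x$, so the correct counts are $n(g,e^*)\leq |V(T)|/2-1$ (those edges together with $g$ itself), versus $n(e^*,g)\geq |V(T)|/2$, namely all $|V(T)|/2-1$ edges of the opposite component of $T-ww'$ --- equal-sized by \cref{weight center}, which is exactly where the hypothesis $|W(T)|=2$ enters --- plus $e^*$ itself. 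This gives the strict inequality $n(e^*,g)>n(g,e^*)$ cleanly, and the rest of your propagation argument then goes through verbatim.
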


Consider the case that $|W(T)|=|W(\mt{L}(T))| = 1$. Denote  $W(T)=\{w^*\}$ and $W(\mt{L}(T)) = \{w\}$. In $T$ (rooted at $w^*$), $w$ is an 
edge descendant of $w^*$. Define $B(T)$ the subgraph of $\L(T)$ induced by the following vertex set:
$$
V(B(T))=\{w\} \cup \{v \in V(T): v \ \mbox{is a descendant of} \ w \ \mbox{in} \ T\}.
$$

\begin{Lemma}\label{lem:et} Let $T$ be a tree with $|W(T)| = |W(\L(T))| = 1$. Denote $W(\L(T))=\{w\}$. Then 
\begin{enumerate}[\rm (i)]
    \item $\L(T)$ consists of two branches, $B(T) -  w$ and $\L(T) - B(T)$. 
    \item $|B(T)| \leq \lfloor \frac{|V(T)|-1}{2} \rfloor$. 
\end{enumerate}
\end{Lemma}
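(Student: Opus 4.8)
The plan is to push everything through the edge--descendant naming convention and the subtree structure of $T$. Write $e^* = w^*w$ for the tree-edge that the central vertex $w \in V(\L(T))$ represents; by hypothesis $w$ is a child of $w^*$ in $T$ rooted at its weight center $w^*$. First I would record the basic dichotomy coming from $T$ being a tree: deleting $e^*$ splits $T$ into two subtrees, the subtree $T_w$ rooted at $w$ and the complementary subtree $T'$ containing $w^*$. Since $e^*$ is the unique edge joining the two parts, every edge of $T$ other than $e^*$ lies wholly inside $T_w$ or wholly inside $T'$. This is the combinatorial backbone for both parts.

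For (i) I would identify the two pieces with line graphs of these subtrees. Under the naming convention, $V(B(T)) \setminus \{w\}$ is exactly the set of proper descendants of $w$ in $T$, which correspond precisely to the edges lying inside $T_w$; hence $B(T) - w$ is the induced subgraph $\L(T_w)$. Dually, $\L(T) - B(T)$ is the induced subgraph on the edges lying inside $T'$, i.e.\ $\L(T')$. Because $w = e^*$ is incident in $T$ to both $w^*$ (in $T'$) and the tree-vertex $w$ (in $T_w$), in $\L(T)$ it is adjacent to vertices in each part, whereas no vertex of $\L(T_w)$ is adjacent to a vertex of $\L(T')$ (their defining edges share no endpoint). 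Thus deleting $w$ disconnects $\L(T)$ into exactly $B(T)-w$ and $\L(T)-B(T)$. To phrase this in the language of branches I would invoke that $\L(T)$ is the line graph of a tree, so each cut-vertex lies on exactly two blocks: the two blocks through $w$ are the cliques of tree-edges at $w^*$ and at the tree-vertex $w$, and the two branches they determine are precisely the two claimed sets.

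For (ii) I would first count $|V(B(T))| = 1 + |\{\text{proper descendants of } w\}| = |V(T_w)|$, and then observe that $T_w$ is exactly the component of $T - w^*$ containing the child $w$. Because $|W(T)| = 1$, \cref{weight center} forces every component of $T - w^*$ to have strictly fewer than $|V(T)|/2$ vertices: equality in the component $T_w$ would, via the edge $e^*$, present $T$ as two equal-sized halves and hence (again by \cref{weight center}) produce a second weight center, a contradiction. Since $|V(T_w)|$ is an integer strictly below $|V(T)|/2$, it is at most $\lceil |V(T)|/2 \rceil - 1 = \lfloor (|V(T)|-1)/2 \rfloor$, which is exactly (ii).

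The hard part will be the bookkeeping of the naming convention, namely verifying cleanly that the proper descendants of $w$ in $T$ are in bijection with the edges of $\L(T)$ lying inside $T_w$ and that $w$ itself corresponds to the bridging edge $e^*$, together with the justification that $w$ sits on exactly \emph{two} blocks (so there are exactly two branches rather than more), for which the line-graph-of-a-tree characterization is the clean tool. Once the strict inequality is extracted from the single-weight-center hypothesis, the size bound in (ii) is immediate.
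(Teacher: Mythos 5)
Your proposal is correct and follows essentially the same route as the paper's own (much terser) proof: part (i) comes from the two sides of the edge $w^*w$ forming the two branches at the cut-vertex $w$, and part (ii) comes from applying \cref{weight center} to $T$ at $w^*$, with the strict inequality $|V(T_w)|<|V(T)|/2$ extracted from $|W(T)|=1$ via the two-weight-center characterization. If anything, your version is more careful than the paper's, which loosely asserts the bound $\frac{|V(T)|-3}{2}$ for ``each branch'' of $\L(T)$ when it in fact holds (and is only needed) for the branch $B(T)-w$.
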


\begin{proof} Denote $W(T) = \{w^*\}$. Then $w$ is an edge descendant of $w^*$ in $T$ and there are two  
branches in $\L(T)$, namely, $B(T)-w$ and $\L(T) - B(T)$. Hence (i) is true. Because $|W(T)|=1$, by \cref{weight center}, each branch in  $\L(T)$ contains at most $  \frac{|V(T)|-3}{2}$ vertices. Thus (ii) follows.
\end{proof}

\begin{Observation}\label{line:obs} Let $T$ be a tree of order $p\geq 2$ and let  $\L(T)$ be its line graph. The following can be obtained without difficulty: 
\begin{enumerate}[\rm (i)]
\item $|V(\mt{L}(T))| = |V(T)|-1$, \   $\diam(\mathcal{L}(T)) = \diam(T)-1$. 
\item $d_{\L(T)}(u,v) = \left\{
\begin{array}{ll}
d_{T}(u,v)-1, & \mbox{none of $u$ and $v$ is a descendent of the other in $T$}, \\
d_{T}(u,v), & \mbox{otherwise.}
\end{array}
\right.$
\item $L_{\mathcal{L}(T)}(v) = \left\{
\begin{array}{ll}
L_{T}(v)-1, & |W(T)| = |W(\L(T))|=1 \mbox{ and } v \in V(B(T)), \mbox{  or } |W(\L(T))| \geq 2, \\
L_{T}(v), & \mbox{ otherwise. } \\
\end{array}
\right.$
\item $L(\mathcal{L}(T)) = \left\{
\begin{array}{ll}
L(T)-|B(T)|, & |W(T)| = |W(\L(T))| = 1, \\
L(T)-p+1, & |W(T)| = 1 \mbox{ and }|W(\L(T))| > 1,  \\
L(T), & |W(T)| = 2.
\end{array}
\right.$
\end{enumerate}
\end{Observation}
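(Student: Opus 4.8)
The plan is to handle the four parts in the order (i)--(ii), then (iii), then (iv), since the later parts rest on a single distance formula and on a precise location of the central vertices of $\L(T)$. For (i) the equality $|V(\L(T))| = |V(T)|-1$ is immediate: the vertices of $\L(T)$ are the edges of $T$, and the convention ``denote the edge $uv$ by its edge‑descendant $v$'' is exactly a bijection between $E(T)$ and $V(T)\setminus\{w^*\}$, so $|V(\L(T))| = |E(T)| = p-1$. The real content of (i) is the diameter identity, which I would obtain as a corollary of (ii). So I would first prove the general fact that for two distinct edges $e,f$ of a tree,
\[
d_{\L(T)}(e,f) = 1 + \min\{ d_T(s,t) : s \in e,\ t \in f\},
\]
i.e.\ the line‑graph distance is one more than the shortest distance between their closest endpoints (a short induction on this minimum, using that consecutive edges of a $T$‑path are adjacent in $\L(T)$).

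Then (ii) follows by identifying which endpoints are closest under the descendant relation. Realising the $\L(T)$‑vertices $u,v$ as the edges between $u,v$ and their respective parents: if neither of $u,v$ is a descendant of the other, the $(u,v)$‑geodesic in $T$ passes through both parents, so the closest endpoints are these two parents, at $T$‑distance $d_T(u,v)-2$, giving $d_{\L(T)}(u,v)=d_T(u,v)-1$; if $v$ is a descendant of $u$, the closest pair is $u$ together with the parent of $v$, at distance $d_T(u,v)-1$, giving $d_{\L(T)}(u,v)=d_T(u,v)$. For the diameter in (i), the endpoint formula yields $\diam(\L(T))\le \diam(T)-1$ (orienting each pair so the connecting path reads $a,a',\dots,b,b'$ with $e=aa'$, $f=bb'$, the outer endpoints $a,b'$ satisfy $d_T(a,b')\le\diam(T)$, forcing the inner distance $\le\diam(T)-2$), while the two end‑edges of a diametral path of $T$ realise distance $\diam(T)-1$ in $\L(T)$.

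For (iii) the strategy is to pin down the central vertices of $\L(T)$ in each regime and then read off the level from (ii). When $|W(T)|=2$, \cref{two weight center} gives $|W(\L(T))|=1$, and the single weight center of $\L(T)$ is the edge of $T$ joining its two weight centers; a direct count along the geodesic from $e_v$ to that centre shows $L_{\L(T)}(v)=L_T(v)$ for every $v$. When $|W(T)|=|W(\L(T))|=1$, I would invoke \cref{lem:et}: $\L(T)$ splits into the two branches $B(T)-w$ and $\L(T)-B(T)$, with the centre $w$ being the edge $(w^*,w)$. For $v\in V(B(T))$ the geodesic from $e_v$ to $w$ climbs inside the subtree rooted at $w$, so $L_{\L(T)}(v)=d_T(v,w)=L_T(v)-1$; for $v\notin V(B(T))$ it must first reach $w^*$ and step across, giving $L_{\L(T)}(v)=L_T(v)$. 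When $|W(\L(T))|\ge 2$, \cref{two weight center} forces $|W(T)|=1$, and the central block of $\L(T)$, being a clique, is the star of all edges incident to a single vertex $x$ of $T$; I would argue $x=w^*$, so that every edge at $w^*$ is a central vertex and $L_{\L(T)}(v)=L_T(v)-1$ holds uniformly.

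Finally, (iv) is obtained by summing (iii) over the $p-1$ vertices of $\L(T)$, using $L_T(w^*)=0$ so that $\sum_{v\in V(\L(T))}L_T(v)=L(T)$: in the three regimes one subtracts $1$ for each of the $|B(T)|$ vertices of $B(T)$, for each of the $p-1$ vertices, or for none, yielding $L(T)-|B(T)|$, $L(T)-p+1$, and $L(T)$ respectively. The step I expect to be the main obstacle is proving $x=w^*$ in the $|W(\L(T))|\ge 2$ case, i.e.\ that the edge‑median of $T$ sits at the vertex weight‑centre. I would settle this with a weight‑comparison argument in $\L(T)$: displacing the candidate centre edge from the star at $x$ to an adjacent edge changes the $\L(T)$‑weight by the $n(\cdot,\cdot)$‑difference of \cref{temp}(2), and combining this with the component‑size characterisation of $w^*$ from \cref{weight center} shows that any displacement away from $w^*$ strictly increases the weight, so the only star that can host two or more weight‑centre edges is the star at $w^*$.
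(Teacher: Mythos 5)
The paper offers no proof of \cref{line:obs} at all --- it is stated as something that ``can be obtained without difficulty'' --- so there is no argument of the authors' to compare yours against; what matters is whether your proposal is correct, and it is. Your route is the natural one: the endpoint formula $d_{\L(T)}(e,f)=1+\min\{d_T(s,t):s\in e,\ t\in f\}$ gives (ii) and hence (i), and parts (iii)--(iv) reduce, exactly as you say, to locating the central vertices of $\L(T)$ in the three regimes. The step you rightly single out as the crux --- that when $|W(\L(T))|\geq 2$ the central block of $\L(T)$ is the star of edges at $w^*$ --- is correct, and your sketched weight-comparison does close it: for two edges $e=xa$ and $f=xb$ at a common vertex $x$ of $T$, one computes $n(e,f)=|T_a|$ and $n(f,e)=|T_b|$, where $T_a,T_b$ are the components of $T-x$ containing $a,b$, so by \cref{temp}(2) a weight-center edge $xa$ must have $|T_a|$ maximal among all components of $T-x$; if $x\neq w^*$, choose a weight-center edge $xa$ with $w^*\notin T_a$ (possible, since there are at least two such edges and their components are disjoint); then $|T_a|\geq |T_c|$ for the component $T_c\ni w^*$, while $T_a\cup\{x\}$ lies inside a single component of $T-w^*$, so \cref{weight center} forces $|T_a|\leq p/2-1$, whence the component of $T-w^*$ containing $x$ has at least $p-|T_c|\geq p/2+1$ vertices, contradicting \cref{weight center} again. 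The same computation settles the claim you left as a bare assertion in the $|W(T)|=2$ case, namely that $W(\L(T))$ is precisely the edge $ww'$. It is worth noting what your argument buys beyond the Observation itself: the paper silently relies on these unproved facts elsewhere (the assertion before \cref{lem:et} that the weight center $w$ of $\L(T)$ is an edge descendant of $w^*$, and the assertion in the later theorems that $W(\L(T))$ is the edge joining the two weight centers of $T$), and your weight-comparison lemma is exactly the missing justification for all of them.
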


The next two results deal with the situation that $T$ is a lower bound tree.  Precisely,  we show necessary conditions that its block graph is also a lower bound block graph. 
\begin{Theorem}\label{thm:nolb1} Let $T$ be a tree with diameter $d(T) \geq 3$ and $|W(T)| = |W(\L(T))| = 1$. If $|V(T)|$ is even, or 
    $|V(T)|$ is odd with $|B(T)| < (|V(T)|-1)/2$, then $\L(T)$ is not a lower bound block graph.
\end{Theorem}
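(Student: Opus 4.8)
The plan is to show that no ordering of $V(\L(T))$ can satisfy conditions (a) and (b) of \cref{thm:lb}; since these are necessary for the lower bound to be attained, this forces $rn(\L(T)) > LB(\L(T))$, i.e.\ $\L(T)$ is not a lower bound block graph. First I would record the relevant parameters. Writing $p = |V(T)|$, \cref{line:obs} gives $|V(\L(T))| = p-1$ and $\diam(\L(T)) = \diam(T)-1 \geq 2$, so \cref{thm:lb} indeed applies; moreover $|W(\L(T))|=1$ gives $\ve(\L(T)) = 1$. By \cref{lem:et}(i), $\L(T)$ has exactly two branches, $X := B(T)-w$ and $Y := \L(T)-B(T)$, together with the single central vertex $w$, where $W(\L(T))=\{w\}$; their sizes are $|X| = |B(T)|-1$ and $|Y| = (p-1)-|B(T)|$, so the $p-2$ non-central vertices split as $|X|+|Y| = p-2$.

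Next I would extract the combinatorial constraints that (a) and (b) impose on a putative ordering $\vec{u} = (u_0,u_1,\ldots,u_{p-2})$. Condition (a) reads $L(u_0)+L(u_{p-2}) = \ve(\L(T)) = 1$; since $w$ is the unique vertex of level $0$, exactly one endpoint must equal $w$, and after reversing the ordering if necessary (which preserves (a) and (b)) I may assume $u_0 = w$. Then $u_1,\ldots,u_{p-2}$ are all non-central and lie in $X \cup Y$. By condition (b), with $|W(\L(T))|=1$, every consecutive pair $u_i,u_{i+1}$ must lie in different branches; as there are only the two branches $X,Y$, the subsequence $u_1,\ldots,u_{p-2}$ is forced to alternate strictly between $X$ and $Y$.

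The core is then a short parity/counting step. A strictly alternating arrangement of the $p-2$ vertices of $X\cup Y$ forces $\{|X|,|Y|\} = \{\lceil (p-2)/2\rceil,\lfloor (p-2)/2\rfloor\}$; in particular $\bigl||X|-|Y|\bigr| \leq 1$, with equality of $|X|$ and $|Y|$ when $p$ is even. Substituting $|X|-|Y| = 2|B(T)|-p$ and invoking $|B(T)| \leq \lfloor (p-1)/2\rfloor$ from \cref{lem:et}(ii), I obtain a contradiction in each hypothesized case. When $p$ is even, alternation requires $|B(T)| = p/2$, impossible since $|B(T)| \leq p/2-1$. When $p$ is odd with $|B(T)| < (p-1)/2$, integrality gives $|B(T)| \leq (p-3)/2$, whence $\bigl||X|-|Y|\bigr| = |2|B(T)|-p| \geq 3 > 1$, again impossible. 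Thus no ordering meets both (a) and (b), and the conclusion follows.

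The two case computations are routine; the step needing the most care is the structural reduction, namely pinning down via \cref{lem:et} that $\L(T)$ has \emph{exactly} two branches and that condition (a) pins $w$ to an endpoint. Only after this does condition (b) genuinely collapse to a two-colour alternation whose feasibility is governed purely by the branch sizes $|X|$ and $|Y|$, which is precisely where the hypotheses on $|V(T)|$ and $|B(T)|$ bite.
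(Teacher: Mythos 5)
Your proof is correct and takes essentially the same route as the paper's: both invoke \cref{lem:et} to reduce $\L(T)$ to exactly two branches with $|B(T)|\leq\lfloor(|V(T)|-1)/2\rfloor$, and both conclude that the branch-alternation forced by \cref{thm:lb}(b) is numerically impossible given the resulting gap between the two branch sizes. If anything, your bookkeeping is slightly more careful: you first use condition (a) to pin the central vertex $w$ at an endpoint, so the remaining $p-2$ vertices must strictly alternate (branch sizes differing by at most $1$), whereas the paper deduces impossibility directly from a size gap of at least $2$ without spelling out the role of (a).
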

\begin{proof} Denote $W(\L(T))=\{w\}$. By the assumption and  \cref{lem:et}, we have $|B(T)| \leq \lfloor |V(T)|/2 \rfloor  - 1$, and $\L(T)$ consists of two branches, $B(T)-w$ and $\L(T) - B(T)$. Since $|V(B(T)) \setminus \{w\}| \leq \lfloor |V(T)|/2 \rfloor-2$, it must be $|V(\L(T)) \setminus V(B(T))| \geq \lfloor |V(T)|/2 \rfloor$. Therefore it is impossible to find an ordering of $V(\L(T))$ such that \cref{thm:lb} (b) holds. Hence, $\L(T)$ is not a lower bound block graph.   
\end{proof}

\begin{Theorem}\label{thm:line4} Let $T$ be a tree of order $p$ and  diameter $d(T) \geq 2$. Let $\L(T)$ be its line graph. 
Assume $T$ is a lower bound tree with an optimal ordering  $\vec{u} = (u_0,u_1,\ldots,u_{p-1})$ of $V(T)$ from  \cref{thm:lb} such that $L(u_{p-2})=1$, and one of the following holds: 
\begin{enumerate}[\rm (i)]
    \item $|W(T)|= |W(\L(T))| = 1$, $p$ is odd, and $|B(T)| = (p-1)/2$; \item $|W(T)| =1$ and $|W(\L(T))| > 1$;
    \item $|W(T)|=2$.
\end{enumerate}
Then $\L(T)$ is a lower bound block graph and 
$
rn(\L(T))=LB(\L(T))=rn(T)-d(T)+1 - \ve (T). 
$
\end{Theorem}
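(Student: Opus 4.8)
The plan is to take the optimal ordering $\vec{u} = (u_0, u_1, \ldots, u_{p-1})$ of $V(T)$ guaranteed by the hypothesis and convert it into an ordering of $V(\L(T))$ that satisfies the conditions of \cref{thm:main}, then read off the span. Recall that vertices of $\L(T)$ are identified with edge-descendent vertices of $T$, so $V(\L(T)) = V(T) \setminus \{w^*\}$ in case (i)--(ii), and $V(\L(T)) = V(T) \setminus \{w, w'\}$ (one fewer vertex) in case (iii), with the sizes reconciling via \cref{line:obs}(i). First I would, in each of the three cases, delete the appropriate one or two vertices from $\vec{u}$ to obtain a candidate ordering $\vec{u}'$ of $V(\L(T))$; the condition $L(u_{p-2}) = 1$ should be what allows the tail of the ordering to be adjusted cleanly so that \cref{thm:lb}(a) for $\L(T)$ holds (i.e. $L_{\L(T)}(u_0') + L_{\L(T)}(u_{p'-1}') = \ve(\L(T))$). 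The level shifts recorded in \cref{line:obs}(iii) — namely $L_{\L(T)}(v) = L_T(v) - 1$ or $L_T(v)$ depending on the branch and the weight-center configuration — are the bookkeeping device that translates the original $L$-values into the new ones.

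Next I would verify \cref{thm:lb}(b) for $\vec{u}'$: consecutive vertices must lie in different branches (if $|W(\L(T))| = 1$) or opposite branches (if $|W(\L(T))| \geq 2$). Here I would lean on the branch structure described in \cref{lem:et}: in case (i) $\L(T)$ has exactly the two branches $B(T) - w$ and $\L(T) - B(T)$, and the condition $|B(T)| = (p-1)/2$ forces these two branches to have as-equal-as-possible sizes, which is exactly what is needed for an alternating ordering between them to exist. Since $\vec{u}$ already alternates branches in $T$, and passing to the line graph only relabels/merges the incidence structure, the induced ordering $\vec{u}'$ should inherit the required alternation; the edge-descendent identification preserves ``same branch'' versus ``different/opposite branch'' relationships in the way encoded by \cref{line:obs}(ii)--(iii).

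The main work, and the step I expect to be the real obstacle, is verifying inequality \eqref{eq:dij} of \cref{thm:main} for $\vec{u}'$ in $\L(T)$. The strategy is to push the inequality back to $T$: for vertices $u_i', u_j'$ I would express $d_{\L(T)}(u_i', u_j')$ via \cref{line:obs}(ii) (subtracting $1$ unless one is a descendent of the other), express the $L_{\L(T)}$-sum via \cref{line:obs}(iii), and use $d(\L(T)) = d(T) - 1$ and $\ve(\L(T))$ from the case analysis. Each of these substitutions introduces a controlled shift of $\pm 1$ relative to the corresponding quantity in $T$, and the hope is that these shifts cancel so that \eqref{eq:dij} for $\L(T)$ follows from the known \eqref{eq:dij} for $T$ (which holds because $T$ is a lower bound tree). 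The delicate point is that the ``descendent'' correction in \cref{line:obs}(ii) applies only to same-branch pairs, so the cancellation must be checked separately for same-branch and different/opposite-branch pairs, and the constant $1 - \ve$ appearing in these shifts behaves differently across the three cases; this is where I expect to have to argue carefully rather than merely cite a shift.

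Once \eqref{eq:dij} is established, \cref{thm:main} gives that $\L(T)$ is a lower bound block graph. For the span formula, I would compute $LB(\L(T))$ directly from \eqref{eq:lb} using $|V(\L(T))| = p - 1$, $d(\L(T)) = d(T) - 1$, and the appropriate value of $L(\L(T))$ and $\ve(\L(T))$ from \cref{line:obs} and the case analysis, then simplify algebraically and compare with $LB(T) = rn(T)$. The three formulas for $L(\L(T))$ in \cref{line:obs}(iv) are precisely calibrated so that, after substitution, the difference $LB(T) - LB(\L(T))$ collapses to $d(T) - 1 + \ve(T)$, yielding $rn(\L(T)) = rn(T) - d(T) + 1 - \ve(T)$; this final simplification is routine arithmetic once the correct substitutions are in place.
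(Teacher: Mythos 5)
Your proposal follows the paper's proof essentially verbatim: in each case the paper converts the given optimal ordering of $V(T)$ into an ordering of $V(\L(T))$ (placing the new central vertex of $\L(T)$ at the front where needed), checks (a)--(b) of \cref{thm:lb} using $L(u_{p-2})=1$ and the two-branch alternation from \cref{lem:et}, verifies \eqref{eq:dij} by exactly the $\pm 1$ cancellation you describe via \cref{line:obs}, and reads off the span from \eqref{eq:lb} and \cref{line:obs}(iv). One bookkeeping correction: in case (iii), $V(\L(T))$ is $V(T)$ minus a single weight center (the edge $ww'$ becomes the unique central vertex of $\L(T)$ and heads the new ordering), not $V(T)\setminus\{w,w'\}$.
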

\begin{proof}
Assume $T$ is a lower bound tree and there exists an optimal  ordering  $\vec{u}=(u_0,u_1,\ldots,u_{p-1})$ of $V(T)$ satisfying \cref{thm:lb} such that $L(u_{p-2})=1$.  Without loss of generality, assume $u_0$ is a weight center of $T$. 

(i) $|W(T)| = |W(\L(T))| = 1$, $p$ is odd, and $|B(T)| = (p-1)/2$. Denote $W(\L(T)) = \{w\}$. Define an ordering $\vec{u'}=(u'_0, u'_1, \ldots, u'_{p-2})$ of $V(\L(T))$ as follows: $u'_0 = w$ and  $u'_i=u_i, 1 \leq i \leq p-2$. Then $\vec{u'}$ satisfies (a) and (b) in \cref{thm:lb} for $\L(T)$, and is alternating between the two branches, $B(T)-w$ and $\L(T) - B(T)$. It is enough to  show that $\vec{u'}$ satisfies \eqref{eq:dij} in \cref{thm:main}.  Let $u'_i$ and $u'_j$ be two arbitrary vertices, $0 \leq i < j \leq p-2$. Denote the right-hand side of \eqref{eq:dij} by $S_{i,j}$. Because $\vec{u}$ satisfies (4) for $T$, by  \cref{line:obs} (i) (ii) (iii) we obtain 
$$
\begin{array}{lll}
S_{i,j} &=& \sum\limits_{t=i}^{j-1}\left[ L_{\L(T)}(u'_t)+L_{\L(T)}(u'_{t+1})-(d(\L(T))+1) \right] +d(\L(T))+1 \\
&=& \sum\limits_{t=i}^{j-1}  \left[L_{T}(u'_t)-1+L_{T}(u'_{t+1})-d(T)\right]+d(T) \\ 
&=& \sum\limits_{t=i}^{j-1}\left[L_{T}(u'_t)+L_{T}(u'_{t+1})-(d(T)+1)\right]+d(T)+1-1 \\
&\leq& d_{T}(u'_i,u'_j)-1 \leq d_{\L(T)}(u'_i,u'_j).
\end{array}
$$

Hence $\L(T)$ is a lower bound block graph.  By Theorem \ref{thm:main} and \cref{line:obs},  
$$
\begin{array}{ccl}
    rn(\L(T)) & = &(p-2)(d(\L(T))+1)-2L(\L(T))+1  \\
     & = & (p-2)(d(T))-2(L(T)-|B(T)|)+1 \\
     & = & rn(T)-d(T) - p + 1 + 2|B(T)| 
      =  rn(T)-d(T).
 \end{array}
$$

(ii) $|W(T)|=1$ and $|W(\mt{L}(T))| > 1$. 
Define an ordering for $V(\L(T))=\vec{u'} =  (u'_0,u'_1,\ldots,u'_{p-2})$ by  $u'_i=u_i$, $0 \leq i \leq p-2$. Then $\vec{u'}$ satisfies (a) and (b) in \cref{thm:main} for $\L(T)$. 
Similar to (i), one can show that $\vec{u'}$ satisfies \eqref{eq:dij} in \cref{thm:main}. So $\L(T)$ is a lower bound block graph. 

Again, by Theorem \ref{thm:main} and \cref{line:obs} we get 
$$
\begin{array}{ccl}
    rn(\L(T)) & = &(p-2)(d(\L(T)))-2L(\L(T))  \\
     & = & (p-2)(d(T)-1)-2(L(T)-p+1) 
      =  rn(T)-d(T).
 \end{array}
$$

(iii) $|W(T)|=2$. 
In this case, $\L(T)$ has only one central vertex, say $w$. Define an ordering of $V(\L(T))=\vec{u'}=(u'_0,u'_1,\ldots,u'_{p-2})$ by $u'_0=w$ and $u'_i=u_i, 1 \leq i \leq p-2$. Then  $\vec{u'}$ satisfies \cref{thm:lb} (a) (b), and \eqref{eq:dij} in \cref{thm:main} for $\L(T)$. Thus, $\L(T)$ is a lower bound block graph.    
By Theorem \ref{thm:main} and \cref{line:obs}, 
$$ 
\begin{array}{ccl}
    rn(\L(T)) & = &(p-2)(d(\L(T))+1)-2L(\L(T))+1  \\
     & = & (p-2)d(T)-2L(T)+1 
      =  rn(T)-d(T)+1. 
\end{array} 
$$
The proof is complete. 
\end{proof}
The above results can be applied to the line graphs of many known  families of lower bound trees. 
A {\it level-wise regular tree} is a tree rooted at one vertex $w$ or two (adjacent) vertices $w$ and $w'$, in which all vertices with the minimum  distance $i$ from $w$ or $w'$  have the same degree $m_i$, for $0 \leq i < h$, where $h$ is the height of $T$. Denote these trees by $T^1 = T^1_{m_0,m_1,\ldots,m_{h-1}}$ (with one root) and $T^2 = T^2_{m_0,m_1,\ldots,m_{h-1}}$ (with two roots),  respectively. It was proved by Hal\'asz and Tuza \cite[Theorem 4-5]{Tuza}
that $T^1$ and $T^2$ are both lower bound trees when $m_i \geq 3$ for all $i$. 

A {\it complete $m$-ary tree} of height $h$ denoted by $T_{h,m}$ is a single-rooted complete level-wise regular tree with height $h$ where $m_0=m$ and $m_i=m+1$ for all $1 \leq i \leq h$. For instance, a complete binary tree is when $m=2$. In \cite[Theorem 13]{Li}, Li et al.  determined the radio number of $T_{h,m}$. In particular, the authors proved that $T_{h,m}$ are lower bound trees if and only if $m \geq 3$. Note that a complete $m$-ary tree with $m\geq 3$ is a special case for $T^1$ with $m_0 \geq 3$ and $m_i \geq 4$ for $0 \leq i \leq h$. 

The $(n,k)$-\emph{banana tree}, denoted by $B(n,k)$, is a single-rooted level-wise regular tree of height 3. Precisely: 
$$
B(n,k) = T^1_{n, 2, k-1}. 
$$
The $(n,k)$-\emph{firecrackers tree}, denoted by $F(n,k)$, is the tree obtained by taking $n$ copies of a $(k-1)$-star and identifying a leaf of each of them to a vertex of $P_n$ ($n$-vertex of path).  
In \cite[Theorem 4.1-4.2]{Bantva2}, Bantva et al. proved that banana trees $B(n, k)$ with $n \geq 5$ and $k \geq 4$ and firecracker trees $F(n,k)$ with $n, k \geq 3$ are all lower bound trees. Chavez et al. \cite{CLS} proved a more general result and showed that $B(n,k)$ is a lower bound tree for $n \geq 5$ and $k \geq 3$. 

See Figure 4 for examples of the above families of lower bound trees. Note that for every banana tree and complete level-wise regular tree  with $m_i \geq 3, 0 \leq i < h$, there exists an optimal radio labeling $f$ whose induced ordering  $\vec{u}=(u_0,u_1,\ldots,u_{p-1})$ satisfies $L(u_{p-2}) = 1$ where $p = |V(T)|$ (given in  \cite{Bantva2,Li,Tuza}). For the firecracker trees, it is easy to obtain an optimal radio labeling $f$ whose induced ordering $\vec{u}=(u_0,u_1,\ldots,u_{p-1})$ satisfies $L(u_{|V(G)|-2}) = 1$ from the optimal radio labeling given in \cite{Bantva2}. By Theorem \ref{thm:line4}, the line graphs of all these lower bound trees are lower bound block graphs. Hence, one can determine their radio numbers  by direct calculation.

\begin{figure}[ht]
\begin{center}
  \includegraphics[width=5.5in]{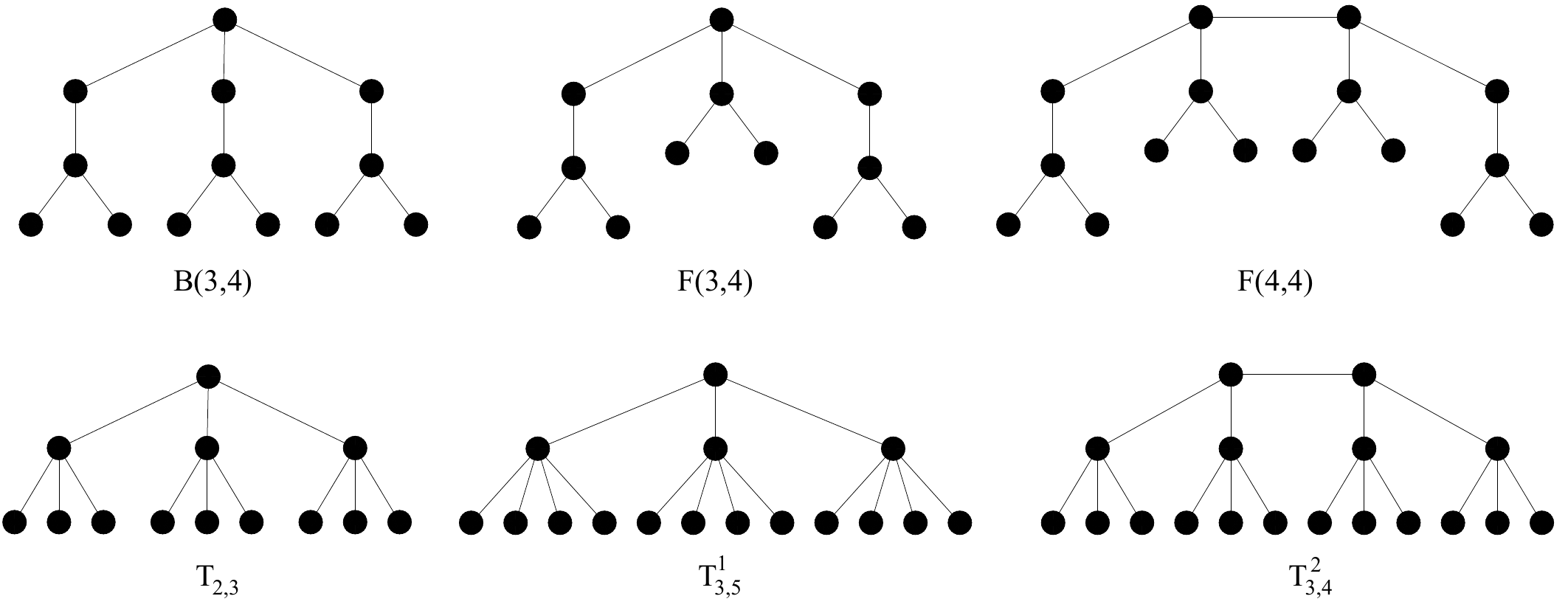}\\
\caption{Banana tree, firecrackers tree, complete $3$-ary tree and level-wise regular tree.}
\label{Fig:Trees}
\end{center}
\end{figure}

\begin{Corollary}
\label{banana:thm} Let $n \geq 5$ and $k \geq 4$ be integers. Then $\L(B(n,k))$ is a lower bound block graph and 
$
rn(\L(B(n,k))) = n(k+6)-5.
$
\end{Corollary}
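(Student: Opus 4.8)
The plan is to obtain the result as a direct application of \cref{thm:line4}, since the banana tree $B(n,k) = T^1_{n,2,k-1}$ is already known to be a lower bound tree for $n \geq 5$ and $k \geq 4$ (by \cite{Bantva2, CLS}). First I would verify that $B(n,k)$ admits an optimal radio labeling whose induced ordering $\vec{u} = (u_0, u_1, \ldots, u_{p-1})$ satisfies the hypothesis $L(u_{p-2}) = 1$; as noted in the paragraph preceding the corollary, such an ordering is guaranteed by the constructions in \cite{Bantva2}. Next I would determine which of the three cases (i)--(iii) of \cref{thm:line4} applies by computing $|W(B(n,k))|$ and $|W(\L(B(n,k)))|$. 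Since $B(n,k) = T^1_{n,2,k-1}$ is single-rooted, we have $|W(T)| = 1$; the main structural check is then to decide whether $|W(\L(T))| = 1$ (requiring in case (i) that $p$ is odd and $|B(T)| = (p-1)/2$) or $|W(\L(T))| > 1$ (case (ii)).

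The key computational step is to record the order $p = |V(B(n,k))|$ and the diameter $d(B(n,k))$, together with the quantity $\ve(T)$. A banana tree $B(n,k)$ has $n$ copies of a $(k-1)$-star, each attached via a path of length $2$ to the root, so $d(B(n,k)) = 5$ and $p = |V(B(n,k))| = n(k+1) + 1$. I would then substitute into the conclusion of \cref{thm:line4}, namely
\begin{equation*}
rn(\L(B(n,k))) = rn(B(n,k)) - d(B(n,k)) + 1 - \ve(B(n,k)).
\end{equation*}
Because $|W(B(n,k))| = 1$ we have $\ve(B(n,k)) = 1$, so this simplifies to $rn(B(n,k)) - 5$. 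The final arithmetic step is to insert the known value of $rn(B(n,k))$ from \cite{Bantva2, CLS} and check that it reduces to $n(k+6) - 5$; this requires knowing $rn(B(n,k)) = n(k+6)$, which I would either cite directly or recompute from the lower bound formula \eqref{eq:lb} using $L(B(n,k))$.

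The main obstacle is not the application of \cref{thm:line4} itself but the bookkeeping needed to confirm its hypotheses hold for this specific family. In particular, I expect the delicate point to be verifying the weight-center condition for $\L(B(n,k))$: I must determine $|W(\L(B(n,k)))|$ precisely and, if it equals $1$, check the parity of $p$ and the exact size $|B(T)|$ demanded by case (i). Getting the correct case is essential because cases (i) and (iii) contribute the extra $+1$ to $rn(\L(T))$ whereas case (ii) does not, and this affects the constant in the final formula. Once the applicable case is pinned down and the values $p$, $d$, $L$, and $rn(B(n,k))$ are correctly assembled, the identity $rn(\L(B(n,k))) = n(k+6) - 5$ follows by routine substitution.
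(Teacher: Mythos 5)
Your overall route is the same as the paper's: cite the known lower-bound-tree status of $B(n,k)$, invoke the existence of an optimal ordering with $L(u_{p-2})=1$, and apply \cref{thm:line4}. However, the execution has concrete errors, and the one genuinely nontrivial verification is left undone. First, the bookkeeping: the paper's banana tree is $B(n,k)=T^1_{n,2,k-1}$, i.e.\ it is built from $(k-1)$-stars, so $p=|V(B(n,k))|=nk+1$, not $n(k+1)+1$; it has height $3$, so $d(B(n,k))=6$, not $5$; and its radio number is $rn(B(n,k))=LB(B(n,k))=(p-1)(d+1)-2L(B(n,k))+1=7nk-6n(k-1)+1=n(k+6)+1$, not $n(k+6)$ (here $L(B(n,k))=n+2n+3n(k-2)=3n(k-1)$). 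Your two errors cancel: you computed $rn(T)-d(T)+1-\ve(T)$ as $n(k+6)-5+1-1$, while the correct computation is $\bigl(n(k+6)+1\bigr)-6+1-1$; both happen to equal $n(k+6)-5$, so the agreement of your final formula with the corollary is luck, not a derivation.

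Second, and more importantly, you defer exactly the step on which the application of \cref{thm:line4} hinges: determining which of cases (i)--(iii) holds. This can and must be settled. In $\L(B(n,k))$ the $n$ edges incident to the root of $B(n,k)$ form a clique $K_n$, and each of its vertices is attached to a pendant clique structure of $k$ further vertices; a direct weight comparison (or \cref{temp}) shows every vertex of this central clique has strictly smaller weight than its neighbors outside it, and all $n$ of them have equal weight by symmetry, so $W(\L(B(n,k)))$ is the whole clique and $|W(\L(B(n,k)))|=n\geq 5>1$. Hence case (ii) applies (not case (i), so no parity or $|B(T)|$ discussion is needed), and the theorem gives $rn(\L(B(n,k)))=rn(B(n,k))-d(B(n,k))=n(k+6)+1-6=n(k+6)-5$, as claimed. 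Without this determination, and with the incorrect values of $p$, $d$, and $rn(B(n,k))$, the proposal as written does not constitute a proof.
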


\begin{Corollary}
\label{fire:thm} Let $n,k \geq 3$ be integers. Then $\L(F(n,k))$ is a lower bound block graph and 
$$
rn(\L(F(n,k))) = \left\{
\begin{array}{ll}
\frac{(n^{2}+1)k}{2}+4n-6, & \mbox{ if } n \mbox{ is odd}, \\ 
\frac{n^{2}k}{2}+4n-5, & \mbox{ if } n \mbox{ is even}.
\end{array}
\right.
$$
\end{Corollary}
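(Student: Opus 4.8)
The plan is to derive $rn(\L(F(n,k)))$ as a direct corollary of \cref{thm:line4}, so the main task is to (i) verify that $F(n,k)$ satisfies the hypotheses of that theorem and (ii) substitute the known radio number of $F(n,k)$ together with the diameter and the indicator $\ve(T)$ into the formula $rn(\L(T)) = rn(T) - d(T) + 1 - \ve(T)$. First I would recall from \cite{Bantva2} that $F(n,k)$ is a lower bound tree for $n,k \geq 3$, along with its explicit radio number, and note that the firecracker tree has one or two weight centers depending on the parity of $n$: since $F(n,k)$ is built on the path $P_n$, the weight center structure of $F(n,k)$ mirrors that of $P_n$, giving $|W(F(n,k))|=1$ when $n$ is odd and $|W(F(n,k))|=2$ when $n$ is even. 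This parity split is exactly what produces the two cases in the statement.

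Next I would confirm the hypothesis $L(u_{p-2})=1$. As remarked in the paragraph preceding the corollary, the optimal ordering of $F(n,k)$ from \cite{Bantva2} can be arranged so that $L(u_{|V(G)|-2})=1$; I would simply cite this fact. Then I must check which clause of \cref{thm:line4} applies. For $n$ even, $|W(F(n,k))|=2$, so clause (iii) applies directly and yields $rn(\L(T))=rn(T)-d(T)+1$. For $n$ odd, $|W(F(n,k))|=1$, and I would need to determine $|W(\L(F(n,k)))|$: here I expect clause (ii) to apply, i.e. $|W(\L(F(n,k)))|>1$, giving $rn(\L(T))=rn(T)-d(T)$ (equivalently $rn(T)-d(T)+1-\ve(T)$ with $\ve(T)=1$). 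Verifying this weight-center count for the line graph is the step that needs genuine care rather than routine substitution.

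The hard part will therefore be pinning down $|W(\L(F(n,k)))|$ in the odd case and, more fundamentally, deciding whether clause (i) or clause (ii) of \cref{thm:line4} is the relevant one. This requires understanding how the weight center of the single-rooted firecracker tree behaves under the line-graph operation, and in the case $|W(\L(T))|=1$ would additionally force me to check the delicate condition $|B(T)|=(p-1)/2$ from clause (i) (failing which, by \cref{thm:nolb1}, $\L(F(n,k))$ would not even be a lower bound block graph). I would resolve this by computing $wt$ at the candidate central edges of $\L(F(n,k))$ directly, using \cref{two weight center} where $n$ is even, and the structural description of $B(T)$ otherwise.

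Once the correct clause is identified, the remainder is arithmetic. I would take the diameter $d(F(n,k))=5$ (the longest geodesic runs from a $(k-1)$-star leaf, through its star center and the attaching $P_n$-vertex, across $P_n$ to the far $P_n$-vertex, up to its star center and out to a leaf, giving length $5$), compute $\ve(F(n,k))$ by parity of $n$, and substitute the explicit value of $rn(F(n,k))$ from \cite{Bantva2} into $rn(\L(T))=rn(T)-d(T)+1-\ve(T)$. Collecting the two parity cases should reproduce exactly $\tfrac{(n^2+1)k}{2}+4n-6$ for $n$ odd and $\tfrac{n^2k}{2}+4n-5$ for $n$ even, completing the proof.
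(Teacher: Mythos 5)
Your overall route is the same as the paper's: quote from \cite{Bantva2} that $F(n,k)$ is a lower bound tree admitting an optimal ordering with $L(u_{p-2})=1$, split by parity of $n$ (indeed $|W(F(n,k))|=1$ for $n$ odd and $|W(F(n,k))|=2$ for $n$ even, as you say), apply \cref{thm:line4}, and substitute into $rn(\L(T))=rn(T)-d(T)+1-\ve(T)$. Your clause identification is also correct: for $n$ even, clause (iii) applies; for $n$ odd, clause (ii) applies, since with $m=(n+1)/2$ the two middle path-edges $v_{m-1}v_m$ and $v_mv_{m+1}$ tie for minimum weight in $\L(F(n,k))$ (the middle star edge $v_mc_m$ has strictly larger weight, as a count of the sets $A(\cdot,\cdot)$ shows), so $|W(\L(F(n,k)))|=2>1$ and the delicate case (i)/\cref{thm:nolb1} scenario never arises.

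However, your diameter is wrong, and the error is not cosmetic: $d(F(n,k))=n+3$, not $5$. The geodesic you describe---star leaf, star center, attaching vertex, across $P_n$, star center, star leaf---has length $2+(n-1)+2=n+3$, because the traversal ``across $P_n$'' costs $n-1$ edges, not one; your value $5$ holds only for $n=2$, which is excluded. The slip propagates into the final arithmetic: with $d=5$ your substitution would produce $\frac{(n^2+1)k}{2}+5n-8$ for $n$ odd and $\frac{n^2k}{2}+5n-7$ for $n$ even, which do not match the corollary, so the proof as written fails at its last step. With the correct diameter it does close: from \cite{Bantva2} one has $rn(F(n,k))=\frac{(n^2+1)k}{2}+5n-3$ for $n$ odd and $\frac{n^2k}{2}+5n-3$ for $n$ even, and then $rn(\L(F(n,k)))=rn(F(n,k))-(n+3)+1-\ve(F(n,k))$ gives exactly $\frac{(n^2+1)k}{2}+4n-6$ and $\frac{n^2k}{2}+4n-5$, respectively. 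So fix the diameter computation and the rest of your plan is sound.
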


\begin{Corollary} \label{level:thm} If $m_i \geq 3$ for all $0 \leq i < h$, then $\L(T^1)$ and $\L(T^2)$  are lower bound block graphs and 
$$ 
rn(\L(T^1)) = (d+1)(n-1)+1-2\displaystyle\sum_{i=1}^{h}\(m_0 \cdot i \cdot \displaystyle\prod_{0<j<i}(m_j-1)\)-2h, 
$$
$$
rn(\L(T^2)) = d(n-1)-4\displaystyle\sum_{i=1}^{h}\(i \cdot \displaystyle\prod_{j=0}^{i-1}(m_j-1)\)-2h, 
$$
where $d=2h$ (for $T^1$) and $d=2h+1$ (for $T^2$).
\end{Corollary}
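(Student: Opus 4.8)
The plan is to invoke \cref{thm:line4} for each of $T^1$ and $T^2$; the substance of the argument is verifying its hypotheses and, above all, identifying which of the cases (i)--(iii) holds for each tree. Two of the hypotheses come for free: $T^1$ and $T^2$ are lower bound trees whenever $m_i \geq 3$ by the theorem of Hal\'asz and Tuza \cite{Tuza}, and each admits an optimal ordering $\vec{u}=(u_0,\ldots,u_{p-1})$ with $L(u_{p-2})=1$, as recorded in the paragraph preceding the statement. So the remaining task is purely to determine the weight-center structure of the two line graphs and then to read off the radio numbers.

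For $T^1$, rooted at its unique weight center $w^*$, the key claim is $|W(\L(T^1))| = m_0 > 1$, which places us in \cref{thm:line4}(ii). I would prove this directly from the weight function. In $\L(T^1)$ the $m_0$ edges meeting $w^*$ become mutually adjacent vertices $e_1,\dots,e_{m_0}$ forming a clique, and the $m_0$ subtrees of $\L(T^1)$ hanging below them are pairwise isomorphic by the level-wise regularity of $T^1$; hence $wt(e_1)=\cdots=wt(e_{m_0})$. For any child $c$ of some $e_i$, the set $A(c,e_i)$ is confined to a single one of these $m_0$ symmetric subtrees, so (as $m_0\geq 3$) it contains fewer than half the vertices of $\L(T^1)$; by \cref{temp}(2) this gives $wt(c) > wt(e_i)$, and iterating shows that weight strictly increases as one descends. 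Since the $e_i$ are the shallowest vertices of $\L(T^1)$, they are exactly its weight centers, whence $|W(\L(T^1))| = m_0 \geq 3$. This is the one genuinely non-routine step, and it is also where the hypothesis $m_i\geq 3$ is essential: it guarantees that no single branch carries half the mass, which is precisely what separates this situation from the non-lower-bound cases of \cref{thm:nolb1}.

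For $T^2$ the corresponding step is immediate: $|W(T^2)|=2$ by definition (two adjacent roots splitting the tree into equal halves, cf. \cref{weight center}), so \cref{two weight center} yields $|W(\L(T^2))|=1$ and we are in \cref{thm:line4}(iii). Thus \cref{thm:line4} applies to both $T^1$ (via (ii)) and $T^2$ (via (iii)), and in each case $\L(T)$ is a lower bound block graph with $rn(\L(T)) = rn(T) - d(T) + 1 - \ve(T)$.

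It remains to compute. Substituting $(d(T^1),\ve(T^1))=(2h,1)$ and $(d(T^2),\ve(T^2))=(2h+1,0)$ into this identity reduces both to $rn(\L(T^1)) = rn(T^1)-2h$ and $rn(\L(T^2)) = rn(T^2)-2h$. Writing $rn(T^1)$ and $rn(T^2)$ as their lower bounds from \cref{thm:lb}, and inserting the orders and the total levels $L(T^1)=\sum_{i=1}^{h} i\,m_0\prod_{0<j<i}(m_j-1)$ and $L(T^2)=2\sum_{i=1}^{h} i\prod_{j=0}^{i-1}(m_j-1)$ of the two trees, a direct simplification gives the two displayed formulas. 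Everything after the weight-center computation for $\L(T^1)$ is bookkeeping, so that computation is where I expect the only real difficulty to lie.
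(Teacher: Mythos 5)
Your proposal is correct and takes essentially the same route as the paper: the corollary is obtained by citing Hal\'asz--Tuza for the lower-bound-tree property and the existence of optimal orderings with $L(u_{p-2})=1$, placing $T^1$ in case (ii) of \cref{thm:line4} (since $|W(\L(T^1))|=m_0>1$) and $T^2$ in case (iii) (since $|W(T^2)|=2$), and then substituting the orders and total levels into the resulting identity $rn(\L(T))=rn(T)-d(T)+1-\ve(T)$. Your explicit verification that the weight centers of $\L(T^1)$ are exactly the $m_0$ mutually adjacent edges at the root is a correct filling-in of a step the paper leaves implicit, not a different method.
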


In the following three results, we prove conditions such that $T$ is a lower bound tree, given its line graph is a lower bound block graph. 

\begin{Theorem}
Let $T$ be a tree with diameter $d(T)$ and $W(T)=\{w,w'\}$. If its line graph $\L(T)$ is a lower bound block graph, then $T$ is a lower bound tree and  $rn (T) = rn(\L(T)) + d(T) -1$. 
\end{Theorem}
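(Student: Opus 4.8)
The plan is to show that the stated identity is equivalent to $T$ being a lower bound tree, and then to prove the latter by transplanting an optimal ordering of $\L(T)$ onto $T$. First I would record the parameters. By \cref{two weight center}, $|W(\L(T))|=1$, so $\ve(\L(T))=1$ while $\ve(T)=0$; by \cref{line:obs}, $|V(\L(T))|=p-1$, $d(\L(T))=d(T)-1$, and $L(\L(T))=L(T)$; and by \cref{weight center}, $W(T)=\{w,w'\}$ with $ww'\in E(T)$ and $T-ww'$ splitting into two equal halves, so $p=|V(T)|$ is even. A direct computation then gives $LB(\L(T))+d(T)-1=(p-1)d(T)-2L(T)=LB(T)$. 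Since $rn(T)\geq LB(T)$ always holds (\cref{thm:lb}) and $rn(\L(T))=LB(\L(T))$ by hypothesis, the claimed equality $rn(T)=rn(\L(T))+d(T)-1$ is equivalent to $rn(T)=LB(T)$, i.e. to $T$ being a lower bound tree. So it suffices to produce an ordering of $V(T)$ satisfying \cref{thm:main2}.

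Next I would build the ordering. The central edge $e_0=ww'$ is the unique weight center of $\L(T)$ and, being a cut-vertex lying on exactly the two blocks of edges incident to $w$ and to $w'$, it splits $\L(T)$ into exactly two branches, the ``$w$-side'' and the ``$w'$-side''. Taking an optimal ordering $\vec{u'}=(u'_0,\ldots,u'_{p-2})$ of $V(\L(T))$ from \cref{thm:main2} with $u'_0=e_0$, condition (b) forces $u'_1,\ldots,u'_{p-2}$ to alternate strictly between the two sides. I would then set $u_i:=u'_i$ for $1\leq i\leq p-2$ and insert the two weight centers at the two ends, $\{u_0,u_{p-1}\}=\{w,w'\}$, choosing which center goes where so that each end lies opposite its neighbour. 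Because $p$ is even, the length-$(p-2)$ alternating block has its first and last entries on opposite sides, so this choice is consistent and yields $\delta_T(u_0,u_1)=\delta_T(u_{p-2},u_{p-1})=1$; together with the alternation this gives \cref{thm:lb}(a),(b) for $T$. The key bookkeeping facts, all from \cref{line:obs} and the edge-labelling, are that $L_T=L_{\L(T)}$ and $\phi_T=\phi_{\L(T)}$ on every vertex of $V(T)\setminus\{w,w'\}$, and that $d_T(u,v)=d_{\L(T)}(u,v)+1$ unless one of $u,v$ is a descendant of the other in $T$, in which case $d_T=d_{\L(T)}$.

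With the ordering in hand I would verify \cref{thm:main2}. Conditions (a) and (a*) transfer immediately, since $L_T=L_{\L(T)}$ on the inner vertices and the two centers have level $0$. For (b*) only same-branch pairs $u_i,u_j$ with $1\leq i<j\leq p-2$ matter (the centers lie in no branch), and for these $\phi_T=\phi_{\L(T)}$ and $L_T=L_{\L(T)}$, so the bound required of $T$ is exactly the bound (b*) already known for $\L(T)$ shifted by the extra term $-\tfrac{1-\ve(T)}{2}=-\tfrac12$. Since the ordering alternates sides, any same-branch pair has $j-i$ even, hence $j-i-1$ odd; when $d(T)$ is odd the right-hand side $(j-i-1)\tfrac{d(T)}{2}-\sum_{t=i+1}^{j-1}L(u_t)$ is a half-integer, so the integrality of $\phi_T$ upgrades the $\L(T)$-bound to the sharper $T$-bound automatically, and (b*) follows. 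Hence $T$ is a lower bound tree and $rn(T)=LB(T)=rn(\L(T))+d(T)-1$.

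The hard part is the even-diameter case: when $d(T)$ is even the right-hand side of (b*) is an integer, so the half-unit gap is not closed for free, and one must establish the \emph{strict} inequality $\phi_{\L(T)}(u_i,u_j)<(j-i-1)\tfrac{d(T)}{2}-\sum_{t=i+1}^{j-1}L(u_t)$ for same-branch pairs. Equivalently, one must rule out that a same-branch ancestor--descendant pair---the only pairs for which the distance fails to gain a $+1$ on passing from $\L(T)$ to $T$---makes $\L(T)$'s defining inequality tight. I expect this to be the crux: it should be resolved by a closer analysis of the radio condition on a triple $u_i,u_{i+1},u_{i+2}$, showing that the extremal equality $2\phi_{\L(T)}(u_i,u_{i+2})+2L(u_{i+1})=d(T)$ cannot coexist with $\L(T)$ being a lower bound block graph, or, failing a structural obstruction, by locally re-ordering the offending vertices. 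All remaining pairs (opposite-branch pairs and same-branch non-ancestor pairs) gain the needed $+1$ in distance and cause no difficulty.
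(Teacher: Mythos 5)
Your overall route---insert $w$ and $w'$ at the two ends of an optimal ordering of $\L(T)$, keep the interior ordering and labels, and then verify the lower-bound conditions for $T$---is exactly the paper's construction, and your odd-diameter argument is correct and in fact more careful than the paper's own write-up. The paper verifies only the consecutive distances $d_T(u_i,u_{i+1})$ and then asserts that the transplanted labeling is a radio labeling of $T$; the pairs that assertion skips are precisely the ones you isolate: pairs whose $T$-distance does not exceed their $\L(T)$-distance, namely ancestor--descendant pairs, and also pairs formed by an inserted centre and a vertex on its own side, for which the requirement grows by $1$ (the diameter grows by $1$) while the available label gap does not.

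However, the gap you flag in the even-diameter case is not merely unresolved in your write-up---it is unfixable, because the statement is false when $d(T)$ is even. Let $T$ be the $6$-vertex tree with edges $x_2x_1$, $x_1w$, $ww'$, $w'z_1$, $w'z_2$. By \cref{weight center}, $W(T)=\{w,w'\}$, and $d(T)=4$. The line graph $\L(T)$ is the triangle on $\{c,z_1,z_2\}$ (writing $c$ for the edge $ww'$) together with the path $c-x_1-x_2$; it has unique weight centre $c$, $d(\L(T))=3$, $L(\L(T))=5$, so $LB(\L(T))=4\cdot 4-2\cdot 5+1=7$, and the labeling $f(c)=0$, $f(x_2)=2$, $f(z_1)=3$, $f(x_1)=5$, $f(z_2)=7$ satisfies all ten pair constraints, so $rn(\L(T))=7=LB(\L(T))$ and the hypothesis of the theorem holds. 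But $LB(T)=5\cdot 4-2\cdot 5=10$ is not attained: a span-$10$ radio labeling of $T$ would force an ordering with $\{u_0,u_5\}=\{w,w'\}$, the middle four vertices alternating strictly between the two sides, and every consecutive gap equal to $5-d_T(u_i,u_{i+1})$; in any such ordering $x_1$ and $x_2$ lie two positions apart with a level-$1$ vertex between them, so their labels differ by $(3-L(x_1))+(3-L(x_2))=3<4=d(T)+1-d_T(x_1,x_2)$. Hence $rn(T)\geq 11$, so $T$ is not a lower bound tree and $rn(T)\neq rn(\L(T))+d(T)-1$. So your instinct about the crux is exactly right, but neither of your proposed repairs (a structural obstruction, or local re-ordering) can exist; what your argument actually establishes is the theorem under the additional hypothesis that $d(T)$ is odd. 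The same example shows the paper's own proof of the unrestricted statement is in error: transplanting the ordering $(c,x_2,z_1,x_1,z_2)$ gives $u'_0=w'$, and both the ancestor--descendant pair $(x_2,x_1)$ and the centre/same-side pair $(w',z_1)$ violate the radio condition for $T$.
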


\begin{proof} By \cref{weight center}, $T - ww'$ has two equal-sized components and $|V(T)|=p$ is even. By \cref{two weight center}, $|W(\L(T)|=1$.  Denote $W(\L(T))=\{w^*\}$ (which is either $w$ or $w'$). Then $\L(T)-w^*$ has exactly two branches, denoted by $B_{x}$ and $B_{y}$, each has $(p-2)/2$ vertices.  

Because $\L(T)$ is a lower bound block graph, by \cref{thm:lb}, there exists  an ordering of $V(\L(T)) = \vec{u} = (u_0,u_1, \ldots,u_{p-2})$ such that \cref{thm:lb} holds where $f$ is the labeling defined in \cref{thm:lb} (c). By symmetry, assume $u_0=w^*$ and $u_{p-2}$ is a vertex of level 1 in $\L(T)$. Without loss of generality, assume $u_{p-2} \in B_{x}$.  Then $u_{1} \in B_{y}$.   

Define an ordering $\vec{u'}$ of $V(T)$ by: $u'_0 = w, u_{p-1}'=w'$, and $u'_i = u_i$ for $1 \leq i \leq p-2$.  Then (a) and (b) in \cref{thm:lb} hold for $T$. Define a labeling $f'$ on $V(T)$ by $f'(u'_i)=f(u_i)$ for $0 \leq i \leq p-2$, and $f'(u'_{p-1}) = f(u_{p-2}) + d(T)-1$. By the facts that $d(\L(T))=d(T)-1$, $L_{\L(T)}(u_{p-2}) = 1$, and for all $0 \leq i \leq p-1$, it holds that $d_T(u_i, u_{i+1}) = L_{\L(T)} (u_i) + L_{\L(T)} (u_{i+1}) + \ve(\L(T)) = L_T(u_i)+ L_T(u_{i+1}) + 1$, we conclude that $f'$ is a radio labeling for $T$ that satisfies  \cref{thm:lb} (c). Therefore the proof is complete.  
\end{proof}

\begin{Theorem} Let $T$ be a tree with $|V(T)|=p$ for some odd $p$ and diameter $d(T)$. If $|W(T)| = |W(\L(T))| = 1$ and $\L(T)$ is a lower bound block graph, then $T$ is a lower bound tree and $rn(T) = rn(\L(T))+d(\L(T))+1.$ 
\end{Theorem}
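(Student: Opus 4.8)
The plan is to prove the matching lower and upper bounds separately, since the general lower bound $rn(T)\ge LB(T)$ of \cref{thm:lb} always holds. First I would verify that $LB(T)$ already equals the claimed value $rn(\L(T))+d(\L(T))+1$. Write $p=|V(T)|$. By \cref{two weight center} together with $|W(T)|=1$ we are in the case $|W(T)|=|W(\L(T))|=1$, so \cref{lem:et} gives the two branches $B(T)-w$ and $\L(T)-B(T)$ of $\L(T)$, where $W(\L(T))=\{w\}$, with $|B(T)|\le (p-1)/2$. Because $\L(T)$ is a lower bound block graph with one center, \cref{thm:lb}(b) forces an ordering of $V(\L(T))$ whose non-central vertices strictly alternate between these two branches; as they fill $p-2$ positions, their sizes differ by at most one, and the parity of $p$ together with $|B(T)|\le(p-1)/2$ pins down $|B(T)|=(p-1)/2$. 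Substituting this and $L(\L(T))=L(T)-|B(T)|$ from \cref{line:obs}(iv) into $rn(\L(T))=LB(\L(T))$, and simplifying with $d(\L(T))=d(T)-1$ and $|V(\L(T))|=p-1$ from \cref{line:obs}(i), yields exactly $LB(T)=rn(\L(T))+d(\L(T))+1$. Hence it suffices to exhibit a radio labeling of $T$ of this span.

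For the upper bound, let $f$ be an optimal radio labeling of $\L(T)$ with induced ordering $(u_0,\ldots,u_{p-2})$, where $u_0=w$, and let $w^{*}=W(T)$. Identifying $V(\L(T))$ with $V(T)\setminus\{w^{*}\}$, I define $f'$ on $V(T)$ by $f'(w^{*})=0$ and $f'(v)=f(v)+d(T)$ for $v\in V(\L(T))$. The largest label is $f(u_{p-2})+d(T)=rn(\L(T))+d(T)$ and the smallest is $0$, so $span(f')=rn(\L(T))+d(\L(T))+1$; it remains only to check the radio inequality for every pair. For a pair $\{w^{*},v\}$ we have $|f'(w^{*})-f'(v)|=f(v)+d(T)\ge d(T)+1-L_T(v)$ since $L_T(v)\ge 1$, so these hold automatically. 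For a pair $\{u,v\}$ with $u,v\in V(\L(T))$, \cref{line:obs}(ii) gives $d_T(u,v)=d_{\L(T)}(u,v)+1$ unless one of $u,v$ is a descendant of the other in $T$; in that first case the required bound $|f'(u)-f'(v)|=|f(u)-f(v)|\ge d(T)+1-d_T(u,v)=d(\L(T))+1-d_{\L(T)}(u,v)$ is precisely the radio inequality satisfied by $f$ in $\L(T)$.

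The remaining case is the ancestor–descendant pairs, and this is where I expect essentially all the difficulty to lie. If $u$ is a descendant of $v$ in $T$, then $d_T(u,v)=d_{\L(T)}(u,v)$ by \cref{line:obs}(ii), so the inequality for $T$ demands $|f(u)-f(v)|\ge d(\L(T))+1-d_{\L(T)}(u,v)+1$, exactly one unit more than what $f$ guarantees in $\L(T)$. The crux is therefore to show that the optimal labeling $f$ carries at least one unit of slack on every such pair. I would first observe that an ancestor–descendant pair necessarily lies in a single branch of $\L(T)$, hence occupies positions of equal parity in the alternating ordering and differs by at least two in index. Writing the slack of $f$ on a same-branch pair $\{u_i,u_j\}$ as $2\bigl(P_{i,j}-\phi(u_i,u_j)\bigr)-\rho(u_i,u_j)$ (obtained from \cref{d:uv} and the $\phi$-reformulation underlying \cref{thm:main}, with $P_{i,j}$ the right-hand side of the corresponding inequality), one has $\rho(u_i,u_j)=0$ because the shallower vertex is itself a common ancestor lying on the geodesic. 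It then remains to rule out the extremal equality $\phi(u_i,u_j)=P_{i,j}$ for such pairs, and this is exactly the point at which the hypothesis that $\L(T)$ is the line graph of a tree — so that every cut-vertex lies on exactly two blocks — must be used to constrain the optimal ordering and force the missing unit of separation.

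I expect this last step, the tight analysis of ancestor–descendant pairs, to be the main obstacle; everything else is either the automatic lower bound or routine bookkeeping through \cref{line:obs}. Once the radio inequality is confirmed for all pairs, $f'$ is a radio labeling of span $rn(\L(T))+d(\L(T))+1=LB(T)$, so $T$ attains its lower bound, i.e.\ $T$ is a lower bound tree and $rn(T)=rn(\L(T))+d(\L(T))+1$, as claimed.
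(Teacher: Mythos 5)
Your construction is exactly the paper's: prepend $w^*$ to an optimal ordering of $\L(T)$ and shift every label by $d(\L(T))+1$. The parts you complete (the computation $LB(T)=rn(\L(T))+d(\L(T))+1$ via $|B(T)|=(p-1)/2$, the pairs involving $w^*$, and the pairs that are not ancestor--descendant in $T$) are correct. But the step you flag as ``the main obstacle'' and leave unproved --- that an optimal ordering of $\L(T)$ must satisfy the strict inequality $\phi(u_i,u_j)<P_{i,j}$ on every $T$-ancestor--descendant pair --- is a genuine gap, and in fact the statement you hope to prove there is \emph{false} for an arbitrary optimal ordering. Take $T$ with root $w^*$, children $w,c_1,c_2$, where $w$ has children $x_1,x_2$ and $c_1$ has child $e_1$; then $p=7$ is odd, $W(T)=\{w^*\}$, $W(\L(T))=\{w\}$, $d(T)=4$, $d(\L(T))=3$, $|B(T)|=3=(p-1)/2$. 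The ordering $(u_0,\ldots,u_5)=(w,e_1,x_1,c_1,x_2,c_2)$ of $V(\L(T))$ satisfies (a), (b), (c) of \cref{thm:lb}: its induced labels $(0,2,3,5,7,9)$ form an optimal radio labeling of $\L(T)$, so $\L(T)$ is a lower bound block graph with $rn(\L(T))=9$. Yet $(u_0,u_2)=(w,x_1)$ is a parent--child pair of $T$ with $\phi(u_0,u_2)=P_{0,2}=0$, and the shifted labeling gives $f'(w)=4$, $f'(x_1)=7$, violating $|f'(w)-f'(x_1)|\geq d(T)+1-d_T(w,x_1)=4$. So the proposed $f'$ is not a radio labeling of $T$.

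Consequently your argument cannot be finished as organized: the needed one unit of slack genuinely fails for some optimal orderings of $\L(T)$, so one must either prove that a \emph{suitably chosen} optimal ordering of $\L(T)$ avoids all tight ancestor--descendant pairs, or abandon the prepend-and-shift construction altogether. (The theorem itself survives in this example: the ordering $(w^*,x_1,c_1,x_2,c_2,e_1,w)$ of $V(T)$, which is not of the prepend-and-shift form, induces the labeling $(0,3,5,7,9,11,13)$ and shows $rn(T)=13=rn(\L(T))+d(\L(T))+1$.) For what it is worth, the paper's own proof dismisses precisely this point with ``one can easily verify,'' and the example above shows that claim is unjustified as stated; so your honest flagging of the obstacle has located a real weak point of the published argument --- but as a proof, your proposal is incomplete at exactly the step that carries all the mathematical content.
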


\begin{proof} Denote $W(T)=\{w^*\}$ and $W(\L(T))=\{w\}$. Then $w$ is an edge descendant of $w^*$ in $T$, and $w$ is a cut-vertex of exactly two  
branches in $\L(T)$, namely, $B(T)-w$ and $\L(T) - B(T)$. Denote these two branches by $B$ and $B'$, respectively. 

As $\L(T)$ is a lower bound block graph, by  
\cref{thm:nolb1}, $|B(T)| = (p-1)/2$. By \cref{thm:lb}, there exists an ordering $\vec{u} = (u_0,u_1,\ldots,u_{p-2})$ of $V(\L(T))$ such that \cref{thm:lb} holds, where $f$ is the labeling in \cref{thm:lb} (c). By symmetry, assume $u_0 = w$. Note that in $\vec{u}$, the vertices with odd (or even,  respectively) subscripts are in $B'$ ($B$, respectively).

Define an ordering $\vec{u'} =  (u'_0,u'_1,\ldots,u'_{p-2})$ of $V(T)$ by: $u'_0 = w^*$ and $u'_i = u_{i-1}$ for $1 \leq i \leq p-1$. Then (a) and (b) in \cref{thm:lb} hold for $T$. Define a labeling $f'$ on $V(T)$ by 
$f'(u'_0)=0$ and $f'(u'_{i+1}) = f(u_i)+d(\L(T))+1$  
for all $0 \leq i \leq p-2$. 
One can easily verify that $f'$ is a radio labeling of $T$  satisfying (c) in \cref{thm:lb}  with the desired span. The proof is complete. 
\end{proof} 

\begin{Theorem} Let $T$ be a tree of order $p$ and diameter $d(T)$. Assume $|W(\L(T))| \geq 2$, $\L(T)$ is a lower bound block graph, and there exists an ordering $\vec{u} = (u_0,u_1,\ldots,u_{p-2})$ of $V(\L(T))$ satisfying \cref{thm:lb} such that the following holds for all $0 \leq  i < j \leq p-2$: If $u_i$ or $u_j$ is a descendent of the other in $\L(T)$ then $|j-i| \geq d(\L(T))+1$.  
Then $T$ is a lower bound tree, and $rn(T) = rn(\L(T))+d(\L(T))+1.$ 
\end{Theorem}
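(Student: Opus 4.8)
The plan is to build an optimal ordering and labeling for $T$ directly from the given data for $\L(T)$, mirroring the two preceding converse theorems. First, since $|W(\L(T))| \geq 2$, \cref{two weight center} forces $|W(T)| = 1$; write $W(T) = \{w^*\}$, so $\ve(T) = 1$ and $V(\L(T)) = V(T) \setminus \{w^*\}$ under the edge-descendant naming. Let $\vec{u} = (u_0, \ldots, u_{p-2})$ be the given ordering of $V(\L(T))$ with $f$ the labeling of \cref{thm:lb}(c), so $\mathrm{span}(f) = rn(\L(T)) = LB(\L(T))$. Because $\ve(\L(T)) = 0$, \cref{thm:lb}(a) gives $L_{\L(T)}(u_0) = L_{\L(T)}(u_{p-2}) = 0$, and since $|W(\L(T))| \geq 2$ forces $L_{\L(T)}(v) = L_T(v) - 1$ for every $v$ by \cref{line:obs}, both $u_0$ and $u_{p-2}$ sit at level $1$ in $T$. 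I would then set $\vec{u'} = (w^*, u_0, u_1, \ldots, u_{p-2})$ and define $f'$ on $V(T)$ by $f'(w^*) = 0$ and $f'(v) = f(v) + d(\L(T)) + 1$ for $v \in V(\L(T))$; a short check shows $f'$ is exactly the labeling produced by the recursion in \cref{thm:lb}(c) for $\vec{u'}$, and that its span equals $f(u_{p-2}) + d(\L(T)) + 1 = rn(\L(T)) + d(\L(T)) + 1$.

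Next I would record the facts that make everything align. Using $L_T = L_{\L(T)} + 1$ together with the order-$p$ and total-level formulas of \cref{line:obs}, a direct substitution gives $LB(T) = LB(\L(T)) + d(\L(T)) + 1$, so the claimed value of $rn(T)$ is precisely $LB(T)$; since $rn(T) \geq LB(T)$ by \cref{thm:lb}, it suffices to prove that $f'$ is a radio labeling. Conditions (a) and (b) of \cref{thm:lb} for $T$ are immediate: (a) holds because $L_T(w^*) + L_T(u_{p-2}) = 0 + 1 = \ve(T)$, and (b) follows from a branch correspondence — the central block of $\L(T)$ is the clique of edges of $T$ incident to $w^*$, so a vertex lies in the $\L(T)$-branch rooted at a child $x$ of $w^*$ iff it lies in the $T$-branch rooted at $x$; hence consecutive vertices that are in opposite branches of $\L(T)$ land in different branches of $T$, and each pair involving $u'_0 = w^*$ is admissible since $w^*$ is central.

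The heart of the argument is verifying $|f'(u'_j) - f'(u'_i)| \geq d(T) + 1 - d_T(u'_i, u'_j)$ for every pair. Pairs involving $w^*$ reduce to $f(u_{j-1}) + d(\L(T)) + 1 \geq d(\L(T)) + 1 - L_{\L(T)}(u_{j-1})$, which is trivially true. For a pair $u'_i = u_{i-1}$, $u'_j = u_{j-1}$ in which neither is a descendant of the other in $T$, \cref{line:obs}(ii) gives $d_T = d_{\L(T)} + 1$, and the radio inequality for $f$ on $\L(T)$ transfers verbatim because the $+1$ in the distance cancels the $+1$ in the diameter. The remaining, genuinely delicate, case is when one of $u_{i-1}, u_{j-1}$ is a descendant of the other in $T$: here $d_T = d_{\L(T)}$, so the requirement becomes $f(u_{j-1}) - f(u_{i-1}) \geq d(\L(T)) + 2 - d_{\L(T)}(u_{i-1}, u_{j-1})$, which is one unit stronger than what the radio property of $f$ alone supplies.

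This is exactly where the extra hypothesis is used, and I expect it to be the main obstacle to handle cleanly. The key observation is that, for two vertices of $\L(T)$, the $T$-descendant relation coincides with the $\L(T)$-descendant relation — both assert that one vertex is a proper ancestor of the other along the common geodesic emanating from $w^*$ — so the hypothesis applies and yields $|j - i| \geq d(\L(T)) + 1$. Since $f$ is strictly increasing on consecutive terms, $f(u_{j-1}) - f(u_{i-1}) \geq j - i \geq d(\L(T)) + 1$, whereas the right-hand side is at most $d(\L(T)) + 1$ because $d_{\L(T)}(u_{i-1}, u_{j-1}) \geq 1$; hence the inequality holds. Having checked all pairs, $f'$ is a radio labeling of span $LB(T)$, so $rn(T) = LB(T) = rn(\L(T)) + d(\L(T)) + 1$, completing the proof. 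The one step demanding real care is the equivalence of the two descendant notions, since the hypothesis is phrased in $\L(T)$ while the distance formula of \cref{line:obs}(ii) is phrased in $T$.
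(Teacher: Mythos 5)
Your proposal is correct and follows essentially the same route as the paper's proof: prepend $w^*$ to the given ordering, shift all labels by $d(\L(T))+1$, and verify the radio condition for $T$ by splitting pairs according to the descendant relation (equivalently, the paper splits on whether $j-i \geq d(\L(T))+1$, the contrapositive organization of the same hypothesis), using $f(u_j)-f(u_i)\geq j-i$ in one case and the distance transfer of \cref{line:obs}(ii) in the other. The only notable difference is that you explicitly justify two points the paper leaves implicit — the coincidence of the $T$- and $\L(T)$-descendant relations and the arithmetic identity $LB(T)=LB(\L(T))+d(\L(T))+1$ — which strengthens rather than changes the argument.
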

\begin{proof} Since $|W(\L(T))| \geq 2$, so $|W(T)|=1$. Denote $W(T)=\{w^*\}$ and $W(\L(T))$ = \{$w^1$, $w^2$, $\ldots$, $w^t$\} for some $t \geq 2$. 

Define an ordering $\vec{u'}$ of $V(T)$ by $u'_0 = w$ and $u'_i = u_{i-1}$ for $1 \leq i \leq p-1$. Then (a) and (b) in \cref{thm:lb} hold for $T$. 

Define a labeling $f'$ on $V(T)$ by $f'(u'_{0}) = 0$ and $f'(u'_{i+1}) = f(u_i)+d(\L(T))+1$ for all $0 \leq i \leq p-2$. It is enough to prove that $f'$ is a radio labeling for $T$. 
Let $u'_i$ and $u'_j$ be two arbitrary vertices, $0 \leq i < j \leq p-1$. If $i=0$ then it is easy to see that $f'(u'_j)-f'(u'_0) \geq d(T)+1-d_{T}(u'_0,u'_j)$.
Assume that $i \neq 0$. If $j-i \geq d(\L(T))+1$ then $f'(u'_j)-f'(u'_i) = f(u_j) - f(u_i) \geq j-i \geq d(\L(T))+1 \geq d(T) + 1 - d_{T}(u'_i,u'_j)$ as  $d_{T}(u'_i,u'_j) \geq 1$. If $j-i < d(\L(T))+1$ then by the hypothesis, none of $u'_i,u'_j$ is a descendent of the other. By \cref{line:obs} (ii), $d_{\L(T)}(u'_i,u'_j) = d_{T}(u'_i,u'_j)-1$. Therefore, $f'(u'_j)-f'(u'_i) = f(u_j) - f(u_i) \geq d(\L(T))+1 - d_{\L(T)} (u'_i, u'_j) \geq d(T) + 1 - d_{T}(u'_i,u'_j)$. Hence $f'$ is a radio labeling of $T$ with the desired span. 
\end{proof}

While we have shown several families of trees such that both the trees and their corresponding line graphs are lower bound block graphs, there  are trees for which $T$ is not a lower bound block graph but its line graph is a lower bound block graph. That is, $rn(\L(T)) = LB(\L(T))$ but $rn(T) \neq LB(T)$. For instance, $\L(P_{2k+1}) = P_{2k}$ and we know $P_{2k+1}$ is not a lower bound tree while  $P_{2k}$ is. In addition, we give another example as follows.  A tree is called a \emph{caterpillar} if the removal of all its degree-one vertices results in a path, called the spine. Denote by $C(n,k)$ the caterpillar in which the spine has length $n-3$ and all vertices on the spine have degree $k$, where $n \geq 3$ and $k \geq 3$. It was shown by Vaidya and Bantva \cite{Vaidya3} that $\L(C(n,k))$ is a lower bound block graph. On the other hand, it was proved by Bantva et al. \cite{Bantva2} that $rn(C(n,k)) = LB(C(n,k))$ when $n=2m$, but $rn(C(n,k))=LB(C(n,k))+1$ when $n=2m+1$.  

On the other hand, so far, all the known lower bound trees excepts paths have their line graphs being lower bound block graphs.  The following question is raised:

\begin{Question}
Are there infinitely many lower bound trees $T$ other than paths for which $\L(T)$ is not a lower bound block graph? 
\end{Question}

\bigskip
\noindent\textbf{Acknowledgement.}  The authors are grateful to the three  anonymous referees for their careful reading of this article and for their insightful and helpful comments. 

\medskip

\end{document}